\newtheorem{thm}{Theorem}
\newtheorem{cor}{Corollary}
\newtheorem{lem}{Lemma}
\newtheorem{obs}{Observation}
\newtheorem{prp}{Proposition}
\newtheorem{defn}{Definition}
\DeclareMathOperator{\rk}{rk}
\DeclareMathOperator{\R}{\mathbb{R}}
\DeclareMathOperator{\B}{B}
\title[The shard intersection order]{On the shard intersection order of a Coxeter group}
\author[T. K. Petersen]{T. Kyle Petersen}
\thanks{Work supported by an NSA Young Investigator grant}
\date{\today}
\begin{document}

\begin{abstract}
Introduced by Reading, the \emph{shard intersection order} of a finite Coxeter group $W$ is a lattice structure on the elements of $W$ that contains the poset of noncrossing partitions $NC(W)$ as a sublattice. Building on work of Bancroft in the case of the symmetric group, we provide combinatorial models for shard intersections of all classical types, and use this understanding to prove the shard intersection order is EL-shellable. 

Further, inspired by work of Simion and Ullman on the lattice of noncrossing partitions, we show that the shard intersection order on the symmetric group admits a \emph{symmetric boolean decomposition}, i.e., a partition into disjoint boolean algebras whose middle ranks coincide with the middle rank of the poset. Our decomposition also yields a new symmetric boolean decomposition of the noncrossing partition lattice.
\end{abstract}

\maketitle

\section{Introduction}

Introduced by Reading \cite{R}, \emph{shards} are certain pieces of a simplicial hyperplane arrangement $\mathcal{A}$. As will be explained in Section \ref{sec:poset}, we form shards by splitting up the hyperplane arrangement into closed, full-dimensional cones, each entirely contained in some hyperplane. These cones overlap in various ways and, remarkably, the set of intersections of the shards is in bijection with the set $\mathcal{R}$ of \emph{regions} formed by the complement of $\mathcal{A}$ in the ambient vector space. The lattice of intersections of shards, with partial order given by reverse containment, thus passes to a lattice structure $(\mathcal{R},\leq)$ on regions. 

In \cite{R}, Reading proves many general properties of this lattice, including the fact that it is graded, atomic, and coatomic. He also gives a characterization of lower intervals, computes the M\"obius number, and shows that the faces of the order complex of $(\mathcal{R},\leq)$ are in bijection with the faces of the ``pulling triangulation" of a zonotope dual to $\mathcal{A}$.

In the case where $\mathcal{A}$ is the Coxeter arrangement of a root system with Coxeter group $W$, the regions correspond to elements $w$ in $W$, and so the shard intersection order gives a new lattice structure, $(W,\leq)$, to the Coxeter group. The shards themselves are in bijection with elements of $W$ having exactly one descent and the rank generating function is given by the $W$-Eulerian polynomial, \[ W(t) = \sum_{w \in W} t^{d(w)},\] where $d(w)$ denotes the number of descents of $w$, i.e., the number of simple generators $s$ such that $\ell(ws)< \ell(w)$.  Further, the $W$-noncrossing partition lattice, $NC(W)$, is isomorphic to a sublattice of the shard intersection lattice. Reading \cite{R} first proves, then exploits this fact to deduce many properties of $NC(W)$, not least of which is the nontrivial fact that $NC(W)$ is a lattice for any $W$. The classical (type $A_{n-1}$) lattice of noncrossing partitions has been an object of great interest in combinatorics and, more recently, in algebra and geometry. See, for example, \cite{An,A,ABW,AR,IT,McC,Rei} for more on noncrossing partitions and their generalizations.

Bancroft \cite{B} has studied the lattice of shard intersections for root systems of type $A_{n-1}$, i.e., when the associated Coxeter group is the symmetric group. Her work gives an explicit combinatorial description to shard intersections in terms of so-called ``permutation pre-orders." Bancroft uses this model to show $(S_n,\leq)$ is EL-shellable and the first goal of this paper is to extend Bancroft's result to types $B_n$ and $D_n$.

\begin{thm}\label{thm:EL}
If $W$ is a finite Coxeter group of classical type ($A_{n-1}, B_n$, or $D_n$), the shard intersection order $(W,\leq)$ is EL-shellable.
\end{thm}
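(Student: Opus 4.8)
The plan is to construct an explicit EL-labeling of $(W,\le)$ directly from the combinatorial models for shard intersections developed in the preceding sections, extending the labeling Bancroft gave for $S_n$. Recall that $(W,\le)$ is bounded, with $\hat0$ the identity and $\hat1$ the longest element $w_0$, and is graded of rank $|S|$ by the descent statistic; in the combinatorial models $\hat0$ is the finest object and moving up the order corresponds to successively merging blocks, so each covering relation $x\lessdot y$ performs a single elementary merge (in types $B_n$ and $D_n$ this includes absorbing a block into the zero block). The first step is to make this precise in each classical type: to describe the atoms below a given element $y$, and more generally exactly which merges are available from a given $x$ inside a given interval $[x,y]$. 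In types $B_n$ and $D_n$ this requires tracking the distinguished zero block, and in type $D_n$ also the parity condition that governs it.

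The second step is to define the label $\lambda(x\lessdot y)$ of a cover as a canonical invariant of the merge it performs — essentially the smallest entry of the block that is created or enlarged, together with a small amount of secondary data to break ties — and to totally order the label set so that, for $W=S_n$, the construction reduces to Bancroft's. Note that this label depends only on the edge $x\lessdot y$, which is what will give an EL-labeling rather than merely a CL-labeling. Since Theorem~\ref{thm:EL} is then already known in type $A_{n-1}$, what remains is to verify the EL conditions in types $B_n$ and $D_n$.

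The verification splits into two parts. First, one shows that each interval $[x,y]$ has a unique maximal chain with weakly increasing labels: I would argue by induction on the length of the interval, showing that the smallest label that can occur on any maximal chain of $[x,y]$ is forced by the pair $(x,y)$, that it is attained by a unique cover $x\lessdot x'$ with $x'\le y$, and that $[x',y]$ is a shorter interval to which induction applies. (This induction can be organized cleanly using Reading's description of the lower intervals $[\hat0,y]$ as shard intersection orders of smaller rank, or else carried out directly inside the combinatorial model.) Second, one shows this increasing chain is lexicographically least among all maximal chains of $[x,y]$, which is immediate from the same analysis, since the forced first label is by construction smaller than the label of every other cover out of $x$ within $[x,y]$.

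The step I expect to be the main obstacle is type $D_n$. There the parity condition on the zero block means that certain merges which are legal in the unconstrained (type $B_n$) model are unavailable, while conversely two ``half-merges'' may be forced to occur simultaneously, and in addition the data attached to the zero block carries a sign ambiguity. These features complicate the list of covers out of a given element, hence the determination of which label is forced in an interval, and hence the proof that the increasing chain is unique and lexicographically least — most delicately in intervals that mix zero-block merges with ordinary ones. Choosing the total order on labels so that the two kinds of moves interleave correctly is the crux, and it is why types $B_n$ and $D_n$ are handled by parallel but separate arguments rather than a single uniform one.
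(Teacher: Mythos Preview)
Your overall architecture matches the paper's: a case-by-case construction of an explicit edge labeling, with the EL conditions verified by showing that in every interval the greedily-chosen first cover is uniquely determined, that iterating this greedy choice produces a rising chain, and that any deviation forces a fall (by induction on interval length). That part is fine.

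However, the specific labeling you sketch is not the one that works, and this matters. You propose to label a cover by ``the smallest entry of the block that is created or enlarged.'' The paper (following Bancroft) instead labels by the \emph{position} of the rightmost merging block: if $u=d_1|d_2|\cdots|d_k$ and the cover $u\prec w$ merges $d_i$ with $d_j$ ($i<j$), then $\lambda_A(u,w)=j$. The entire argument hinges on a bookkeeping fact about positions: after merging $(d_i,d_j)$, blocks to the right of $d_j$ shift one slot inward, so the next leftmost merging pair lands in position $\ge j$, while if you had merged a non-leftmost pair first, the original leftmost pair survives in a position $\le j-1$. This is what makes the lex-first chain rising and every other chain fall. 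An entry-based label has no such monotonicity in general, so your proposed invariant would need a different argument, and you have not supplied one.

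For $D_n$ you correctly anticipate trouble, but your diagnosis (``parity condition,'' ``sign ambiguity,'' ``half-merges forced to occur simultaneously'') does not match the actual obstruction, and you are missing the idea that resolves it. The paper's point is that transitivity of merging no longer picks out a unique innermost pair once blocks on opposite sides of the center can merge: with $u=d_{-k}|\cdots|d_k$, both $(d_1,d_2)$ and $(d_{-1},d_2)$ can be in ``position $2$'' in the naive $B_n$ sense, and worse, $(d_{-1},d_1)$ may be a merging pair of $u$ with respect to $v$ that cannot be realized by any cover of $u$. The fix is to distinguish \emph{positive} merges ($0\le i<j$) from \emph{negative} merges ($0<-i<j$) and to penalize the latter by adding the co-rank: $\lambda_D(u,w)=j$ for a positive merge and $\lambda_D(u,w)=j+|u|$ for a negative one. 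This forces the lex-first chain to exhaust positive merges (inside-out) before performing any negative merge, and the shifting-of-positions bookkeeping then goes through in both regimes. Without this co-rank shift (or an equivalent device) your plan has no mechanism to guarantee uniqueness of the smallest-label cover in $D_n$ intervals, which is exactly the step your induction needs.
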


The EL-shellability of a poset has well-known topological implications. (See \cite[Section 3.2]{Wachs}.) For example, Theorem \ref{thm:EL} tells us the Stanley-Reisner ring of the shard intersection order is Cohen-Macaulay and, for $W$ of rank $n$, the order complex of $(W,\leq)$ has the homotopy type of a wedge of $(n-2)$-spheres. The number of spheres in the wedge product equals the absolute value of the M\"obius number of the poset, $\mu(1,w_0)$. Generally, an EL-labeling allows one to compute the M\"obius number $\mu(u,v)$ as (up to sign) the number of ``falling" maximal chains in the interval $[u,v]$. In principle, then, our EL-labelings allow for computation of M\"obius numbers, though apart from the interval $[1,w_0]$ we know of no simple characterization of falling chains. Reading computed $\mu(1,w_0)$, by different means, in \cite[Theorem 1.3]{R}. For type $A_{n-1}$ this is the number of indecomposable permutations, found in \cite[A003319]{oeis}. The related sequence for type $B_n$ is \cite[A109253]{oeis}; for type $D_n$ it is \cite[A112225]{oeis}. 

Our proof of Theorem \ref{thm:EL} is done case-by-case in Section \ref{sec:shell}. It remains to show whether the shard intersection orders for non-classical $W$ are also shellable. (Since EL-shellability respects cartesian products of posets, to prove EL-shellability for all finite Coxeter groups it suffices to extend Theorem \ref{thm:EL} to cover all irreducible Coxeter groups.) We remark that it is known that the noncrossing partition lattices $NC(W)$ are EL-shellable for all finite $W$; see \cite{ABW} for a uniform proof. Ideally, one would like to do for the shard intersection order $(W,\leq)$ what Athanasiadis, Brady, and Watt \cite{ABW} did for $NC(W)$ and provide a uniform proof of (EL-) shellability.

In Section \ref{sec:poset}, we will derive Bancroft's model for shard intersections of type $A_{n-1}$ (though our visual representation is new), and we will construct similar combinatorial models for $(W,\leq)$ when $W=B_n$ and when $W=D_n$. Thorough understanding of these models is key to our proofs of EL-shellability.

After proving Theorem \ref{thm:EL} we turn our attention to the shard intersection order of the symmetric group $S_n$ (type $A_{n-1}$) and show that it admits what Hersh \cite[Section 3]{H} calls a \emph{symmetric boolean decomposition}, i.e., a partition of the poset into disjoint boolean algebras whose middle ranks coincide with the middle rank of the lattice. This idea first appears in work of Simion in Ullman \cite{SU}, who demonstrate such a decomposition for the noncrossing partition lattice $NC(n)\cong NC(A_{n-1})$ as part of their proof that $NC(n)$ has a symmetric chain decomposition. Hersh \cite[Theorem 7]{H} proved that $NC(B_n)$ has a symmetric boolean decomposition as well. See also \cite{BP}. The second main result of this paper is the following.

\begin{thm}\label{thm:SBD}
The shard intersection order $(S_n,\leq)$ admits a symmetric boolean decomposition. Moreover, this decomposition restricts to a symmetric boolean decomposition for $(NC(n),\leq)$.
\end{thm}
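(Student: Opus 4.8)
The plan is to build the decomposition explicitly from the combinatorial model of $(S_n,\le)$ developed in Section~\ref{sec:poset} and then to verify by hand that it respects the sublattice $NC(n)$. Recall that every element of $(S_n,\le)$ is encoded by a permutation pre-order (in the visual form given there), that the poset is graded of rank $n-1$, and that its rank numbers are the Eulerian numbers $A(n,k)$, a symmetric sequence. A symmetric boolean decomposition is thus a partition of $(S_n,\le)$ into intervals $[x,y]$, each isomorphic to a boolean lattice $B_j$ and each \emph{centered}, meaning $\rk(x)+\rk(y)=n-1$. Since $NC(n)$ sits inside $(S_n,\le)$ as a rank-$(n-1)$ sublattice (Reading's embedding), the ``moreover'' clause is equivalent to the assertion that every interval of the decomposition is either contained in $NC(n)$ or disjoint from it, together with the fact that the intervals of the first kind exhaust $NC(n)$; note that ``centered'' has the same meaning, centered at rank $(n-1)/2$, for the big poset and for $NC(n)$. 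It is convenient to package the target as a pair of operators on the model, a floor $x\mapsto\underline{x}$ and a ceiling $x\mapsto\overline{x}$, with $\underline{x}\le x\le\overline{x}$, with $[\underline{x},\overline{x}]$ always boolean and centered, and with $x\mapsto[\underline{x},\overline{x}]$ constant on each block of the partition.

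I would construct these operators recursively in $n$, following the strategy of Simion and Ullman for $NC(n)$ but carried out on the larger model. First, partition $(S_n,\le)$ according to a local invariant at the distinguished point $n$ --- in the pre-order picture, the block of $n$ and the way it is attached to the rest of the diagram. The cases should be chosen so that deleting the data attached to $n$ identifies each piece, up to a rank shift, with a Cartesian product of shard-intersection orders $(S_m,\le)$ on fewer points, and so that the pieces organize into families whose members differ in rank by exactly one. Applying the inductive operators to the smaller factors, taking products --- a product of centered boolean intervals is again a centered boolean interval in the product poset --- and splicing each rank-one family, the rank gap supplying a $B_1$ ``toggle'' coordinate, then yields the floor and ceiling operators for $(S_n,\le)$. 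The base cases $n\le 2$ are immediate.

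Compatibility with $NC(n)$ I would build in at every step. By the planarity description of $NC(n)$ inside the model --- the noncrossing partitions are exactly the pre-orders whose diagrams do not cross --- removing the point $n$ from a noncrossing diagram leaves a noncrossing diagram, so the case analysis above restricts to the analogous recursion for $NC(n)$ with each smaller factor replaced by some $NC(m)$. One then checks, case by case on the model, that the floor and ceiling operators and the toggle coordinates never alter whether an element is noncrossing; equivalently, each interval $[\underline{x},\overline{x}]$ is monochromatic --- entirely inside $NC(n)$ or entirely outside. Since the recursion restricts to $NC(n)$, the intervals of the first kind then partition $NC(n)$, which is precisely the ``moreover.'' In particular this recovers, and pins down the vertical placement of, a Simion--Ullman-type symmetric boolean decomposition of the noncrossing partition lattice.

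The step I expect to be the main obstacle is controlling the centering through the recursion. Because $(S_n,\le)$ has rank $n-1$, the target center $(n-1)/2$ changes parity with $n$, and a piece sitting at ranks $[r,r+N]$ inside $(S_n,\le)$ is centered at $r+N/2$, not at $(n-1)/2$; so one cannot simply import a symmetric decomposition of the smaller poset, and must instead arrange the case analysis and the placement of each toggle coordinate so that the spliced intervals come out centered in $(S_n,\le)$. This is why the cases must be paired rather than handled individually, and it is the delicate point already for $NC(n)$. The secondary difficulty is the case-by-case verification that no interval straddles the boundary between noncrossing and crossing elements; that step rests directly on the explicit model of Section~\ref{sec:poset} rather than on any soft poset-theoretic principle, and finding a description of the toggles clean enough to make it routine is the crux.
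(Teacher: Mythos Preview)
Your proposal is a research plan rather than a proof: you outline a recursive Simion--Ullman-style strategy but never actually define the floor, ceiling, or toggle operators, and you flag the two hard steps (centering and monochromaticity) as obstacles still to be overcome. As written there is nothing to verify, and the specific recursion you sketch is doubtful. The Simion--Ullman approach works for $NC(n)$ because deleting the block of $n$ splits a noncrossing partition into a product of smaller noncrossing partitions. For the shard intersection order there is no such clean product structure: a permutation pre-order records not just the block partition but also the left/right relations among blocks, and removing $n$ does not in general decompose what remains into independent factors of the form $(S_m,\le)$. Without that, the inductive step has no engine.

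The paper's proof is entirely different and non-recursive. It uses the Foata--Strehl valley-hopping action on permutations: the hop-equivalence classes $P_w$ partition $S_n$, each class has $2^{n-1-2r}$ elements where $r$ is the number of peaks of $w$, and a short check against the pre-order description of $(S_n,\le)$ shows that $J\subseteq K\subseteq F(w)$ implies $H_J(w)\le H_K(w)$, so each $P_w$ carries a copy of $\B_{n-1-2r}$ centered correctly. The restriction to $NC(n)$ is the observation that valley-hopping preserves the pattern $231$. Note in particular that the resulting decomposition of $NC(n)$ is \emph{not} Simion and Ullman's; your plan, if it could be made to work, would presumably recover theirs instead, so even a successful execution would yield a different decomposition from the one in the paper.
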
 

We remark that the decomposition of $NC(n)$ we obtain is different from Simion and Ullman's decomposition.

We provide the precise definition of symmetric boolean decomposition of a poset in Section \ref{sec:sbd}, and outline some of its consequences. For example, if a poset admits a symmetric boolean decomposition, its rank generating function is what is known as \emph{$\gamma$-nonnegative}.
 
Section \ref{sec:proof} gives the proof of Theorem \ref{thm:SBD}, which follows from Foata and Strehl's ``valley-hopping" action on permutations. We conclude in Section \ref{sec:generalization} with remarks about extending known results on symmetric boolean decompositions, both for the shard intersection orders and noncrossing partition lattices.  While Theorem \ref{thm:SBD} applies only to the symmetric group, our hope is that the models for $B_n$ and $D_n$ given in Section \ref{sec:poset} might ultimately help to extend it to other types.

\section{Combinatorial models for shard intersections}\label{sec:poset}

We begin with a description of shards in a Coxeter arrangement. We assume the reader is familiar with standard Coxeter group definitions. See \cite{Hum} for more background. 

Let $W$ be a finite Coxeter group with root system $\Phi = \Phi^+ \cup \Phi^-$ and simple roots $\Delta = \{\alpha_1,\ldots, \alpha_n\}$. Suppose, without loss of generality, that $W$ acts on a vector space $V$ with basis $\Delta$ and inner product $\langle \cdot,\cdot\rangle$. 

Given a root $\beta \in \Phi$, the hyperplane $H_{\beta}$ is the codimension-1 subspace given by \[ H_{\beta} = \{ \lambda \in V: \langle \lambda, \beta \rangle = 0 \}. \] Notice, then, that $H_{\beta} = H_{-\beta}$ and it suffices to consider only positive roots $\beta \in \Phi^+$. The group $W$ is the group generated by orthogonal reflections across these hyperplanes.

The \emph{Coxeter arrangement} is the collection of all such hyperplanes: \[ \mathcal{A} = \mathcal{A}(\Phi) = \{ H_{\beta} : \beta \in \Phi^+\}.\] The complement of $\mathcal{A}$ in $V$ is a collection of open cones: $\mathcal{R}=V\setminus \bigcup_{H \in \mathcal{A}} H $ that we call the \emph{regions} (or chambers) of the arrangement.

The choice of simple roots implicitly defines a \emph{base region} $B$ (also known as the fundamental chamber) given by \[ B = \{ \lambda \in V : \langle \lambda, \beta \rangle > 0 \mbox{ for all } \beta \in \Phi^{+}\}.\] We obtain a natural bijection between regions $R$ in $\mathcal{R}$ and elements $w$ of $W$ by $R \leftrightarrow w$ if and only if $wB=R$.

We now define shards. While we follow \cite{R}, our presentation is specific to the case of finite Coxeter groups and their root systems.

In order to define a shard of $\mathcal{A}$, we need to consider all rank two subarrangements generated by two hyperplanes, say $H$ and $H'$.  We denote such a subarrangement by $\mathcal{A}(H,H')$. As discussed in \cite[Section 3]{R}, there is precisely one region $B'$ in $\mathcal{A}(H,H')$ that contains the base region $B$ of $\mathcal{A}$. There are precisely two hyperplanes on the closure of $B'$ in $\mathcal{A}(H,H')$, and we call these hyperplanes \emph{basic} with respect to $\mathcal{A}(H,H')$. See Figure \ref{fig:cutting}.

\begin{figure}
\begin{tikzpicture}
\draw[very thick] (-2.5,0) -- (2.5,0) node[right] {\tiny $H''\cap \{ \langle \lambda, \beta \rangle \geq 0 \geq \langle \lambda, \beta' \rangle \}$};
\draw (-2.5,0) node[left] {\tiny $H''\cap \{ \langle \lambda, \beta \rangle \leq 0 \leq \langle \lambda, \beta' \rangle \}$};
\draw[very thick] (-2,2) -- (2,-2);
\draw[color=white,fill=white] (0,0) circle (5pt);
\draw[very thick] (0,-2.5) -- (0,3.25) node[above] {$H_{\beta}$};
\draw[very thick] (-2,-2) -- (3,3) node[right] {$H_{\beta'}$};
\draw (.75,1.5) node {$B'$};
\draw[->] (2.5,2.5) -- (2.8,2.2) node[right] {\tiny $\langle \lambda,\beta' \rangle < 0$};
\draw[->] (2.5,2.5) -- (2.2,2.8) node[above] {\tiny $\langle \lambda,\beta' \rangle > 0$};
\draw[->] (0,2.5) -- (-.35,2.5) node[left] {\tiny $\langle \lambda,\beta \rangle < 0$};
\draw[->] (0,2.5) -- (.35,2.5) node[right] {\tiny $\langle \lambda,\beta \rangle > 0$};
\end{tikzpicture}
\caption{A rank two subarrangement $\mathcal{A}(H,H')$ with basic hyperplanes $H_{\beta}$ and $H_{\beta'}$ determined by the fact that $B \subseteq B'$. We see the hyperplane $H''$ being cut by both $H_{\beta}$ and $H_{\beta'}$.}\label{fig:cutting}
\end{figure}

We will let $H_{\beta}$ and $H_{\beta'}$ denote the basic hyperplanes of $\mathcal{A}(H,H')$, with $\beta, \beta' \in \Phi^+$. Since the base region $B$ is contained in $B'$ we know that both $\beta$ and $\beta'$ point ``toward" $B'$ as shown in Figure \ref{fig:cutting}. More importantly, we will now ``cut" each non-basic hyperplane $H''$ in $\mathcal{A}(H,H') = \mathcal{A}(H_{\beta},H_{\beta'})$ according to whether the inner product of a generic point $\lambda \in H''$ is positive or negative with respect to $\beta$ and $\beta'$. Again, see Figure \ref{fig:cutting}.

Thus, for each hyperplane $H$ in $\mathcal{A}$, there are a number of rank two arrangements in which $H$ appears. In some of these arrangements $H$ is basic, while in others $H$ is not basic and gets cut. Taken together, the cuts divide $H$ into a collection of open, full-dimensional cones. The closures of these open cones are the shards formed from $H$.

If we cut by the basic hyperplane $H_{\beta}$, we split $H$ by imposing either the condition $\langle \cdot, \beta \rangle \geq 0$ or the condition $\langle \cdot, \beta \rangle \leq 0$. Thus a shard $\Sigma \subseteq H$ is given by:
\[ \Sigma = H \bigcap_{\tiny \substack{  H_{\beta} \mbox{ cuts } H \\ \beta \in \Phi^+ }} \{ \lambda \in V : \langle \lambda, \epsilon\beta \rangle \geq 0 \},\] where $\epsilon$ is either $+$ or $-$ for the chosen $\beta$. While any shard corresponds to a unique choice of signs $\epsilon$, not every choice of sign corresponds to a shard. In particular, the number of shards contained in $H$ need not be a power of two.

We now proceed to specific root systems.

\subsection{Type $A_{n-1}$}

While Bancroft \cite{B} has already produced a combinatorial model for shard intersections in type $A_{n-1}$, we include the details here so that the type $B_n$ and $D_n$ models emerge as a natural outgrowth of the ideas behind the type $A_{n-1}$ model.

The root system of type $A_{n-1}$ is most naturally realized in 
\[V =  \R^n/\langle (1,1,\ldots,1)\rangle \cong \left\{ (x_1,\ldots,x_n) \in \R^n :  \sum_{i=1}^{n} x_i = 0 \right\} \cong \R^{n-1},\] where we let \[ \Phi^+ = \{ \varepsilon_j - \varepsilon_i : 1 \leq i < j \leq n\},\] and where $\varepsilon_1,\ldots, \varepsilon_n$ is the standard orthonormal basis of $\R^n$.

With respect to this choice of root system, the base region $B$ is given by: \[ B = \{ (x_1,\ldots, x_n) \in \R^n : x_1 < x_2 < \cdots < x_n\},\] and the hyperplane corresponding to a positive root $\varepsilon_j - \varepsilon_i$ is: \[ H_{i,j} = \{ (x_1,\ldots,x_n) \in \R^n : x_i = x_j \}.\] 

There are only two types of rank two subarrangements of $\mathcal{A}(A_{n-1})$, shown in Figure \ref{fig:Asubs}. If we consider two hyperplanes $H_{i,j}$ and $H_{k,l}$ with $\{i,j\} \cap \{k,l\} = \emptyset$, the hyperplanes are orthogonal, and we get no cutting relations. (This arrangement is isomorphic to $\mathcal{A}(A_1\times A_1)$.) On the other hand, suppose the hyperplanes are not orthogonal. Then we get an arrangement isomorphic to $\mathcal{A}(A_2)$, generated, say, by $H_{i,j}$ and $H_{j,k}$, with $1\leq i < j<k \leq n$. The hyperplanes $H_{i,j}$ and $H_{j,k}$ are basic (as shown in Figure \ref{fig:Asubs}), and the third hyperplane in the arrangement, $H_{i,k}$, gets cut according to whether $x_j \leq x_i = x_k$ or $x_i = x_k \leq x_j$.

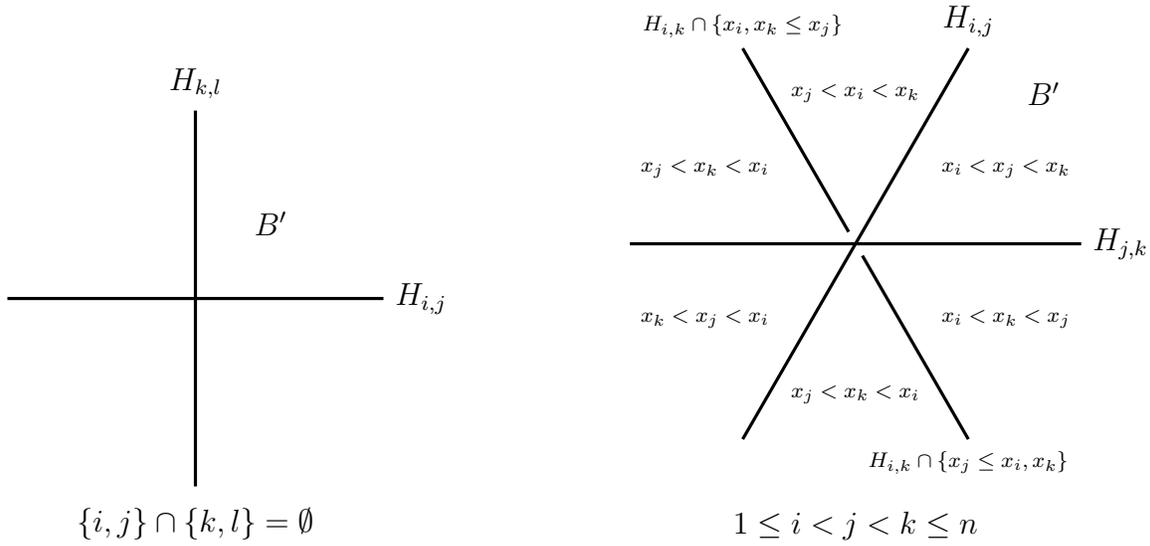
\begin{figure}
\begin{tikzpicture}
\draw[very thick] (-2.5,0) -- (2.5,0) node[right] {$H_{i,j}$};
\draw[very thick] (0,-2.5) -- (0,2.5) node[above] {$H_{k,l}$};
\draw (1,1) node {$B'$};
\draw (0,-3) node {$\{i,j\}\cap \{k,l\} = \emptyset$};
\end{tikzpicture}
\hspace{2cm}
\begin{tikzpicture}
\draw[very thick] (-1.5,2.6) -- (1.5,-2.6) node[below] {\tiny $H_{i,k}\cap \{ x_j \leq x_i, x_k\}$};
\draw (-1.5,2.6) node[above] {\tiny $H_{i,k}\cap \{ x_i, x_k \leq x_j\}$};
\draw[fill=white, color=white] (0,0) circle (5pt);
\draw[very thick] (-3,0) -- (3,0) node[right] {$H_{j,k}$};
\draw[very thick] (-1.5,-2.6) -- (1.5,2.6) node[above] {$H_{i,j}$};
\draw (2.5,2) node {$B'$};
\draw (2,1) node {\tiny $x_i < x_j < x_k$};
\draw (2,-1) node {\tiny $x_i < x_k < x_j$};
\draw (0,2) node {\tiny $x_j < x_i < x_k$};
\draw (0,-2) node {\tiny $x_j < x_k < x_i$};
\draw (-2,1) node {\tiny $x_j < x_k < x_i$};
\draw (-2,-1) node {\tiny $x_k < x_j < x_i$};
\draw (0,-3.75) node {$1\leq i < j < k \leq n$};
\end{tikzpicture}
\caption{The rank two subarrangements of $\mathcal{A}(A_{n-1})$.}\label{fig:Asubs}
\end{figure}

As there are no other possibilities for the rank two subarrangements of $\mathcal{A}(A_{n-1})$, we have the following Proposition, also stated in \cite{B}.

\begin{prp}\label{prp:Ashards}
The hyperplane $H_{i,j}$ has $2^{j-i-1}$ shards, and hence there are \[ \sum_{1\leq i < j \leq n} 2^{j-i-1} = 2^n - n-1\] shards in $\mathcal{A}(A_{n-1})$.
\end{prp}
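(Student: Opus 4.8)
The plan is to count shards hyperplane by hyperplane using the description of rank two subarrangements just established. Fix a hyperplane $H_{i,j}$ with $1 \leq i < j \leq n$. By the preceding discussion, the only rank two subarrangements in which $H_{i,j}$ gets cut are those isomorphic to $\mathcal{A}(A_2)$ in which $H_{i,j}$ plays the role of the non-basic hyperplane $H_{i,k}$; that is, arrangements generated by a pair of hyperplanes $H_{i,k}$ and $H_{k,j}$ with $i < k < j$. (If $H_{i,j}$ appears in an orthogonal $\mathcal{A}(A_1 \times A_1)$ arrangement, or appears as a basic hyperplane, it is not cut.) So the cutting hyperplanes of $H_{i,j}$ are exactly the hyperplanes $H_{i,k}$ and $H_{k,j}$ for each of the $j-i-1$ intermediate indices $k$ with $i < k < j$, and for each such $k$ the cut imposed on $H_{i,j}$ is the single condition distinguishing $x_k \leq x_i = x_j$ from $x_i = x_j \leq x_k$.

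Next I would argue that the resulting sign choices are independent, so that all $2^{j-i-1}$ choices actually occur. On $H_{i,j}$ we have $x_i = x_j$, and the $j-i-1$ quantities $x_k - x_i$ for $i<k<j$ are unconstrained coordinates on $H_{i,j}$; choosing a sign for each cut amounts to choosing the sign of each $x_k - x_i$, and every sign vector is realized by some point of $H_{i,j}$ (generic in the corresponding cone). Hence $H_{i,j}$ is divided into exactly $2^{j-i-1}$ full-dimensional cones, giving $2^{j-i-1}$ shards; this matches the general remark in the excerpt that the relevant sign choices need not all be realized, but here they are.

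Finally I would sum over all positive roots. The total number of shards is
\[
\sum_{1 \leq i < j \leq n} 2^{j-i-1} = \sum_{d=1}^{n-1} (n-d)\, 2^{d-1},
\]
where $d = j-i$ ranges over $1,\ldots,n-1$ and there are $n-d$ pairs $(i,j)$ with $j-i=d$. This is a routine finite sum: writing $\sum_{d=1}^{n-1}(n-d)2^{d-1} = n\sum_{d=1}^{n-1}2^{d-1} - \sum_{d=1}^{n-1} d\,2^{d-1}$ and using $\sum_{d=1}^{n-1}2^{d-1} = 2^{n-1}-1$ and $\sum_{d=1}^{n-1} d\,2^{d-1} = (n-2)2^{n-1}+1$ gives $n(2^{n-1}-1) - (n-2)2^{n-1} - 1 = 2^{n-1}\cdot 2 - n - 1 = 2^n - n - 1$.

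The only genuinely delicate point is the independence claim in the middle step — verifying that every one of the $2^{j-i-1}$ sign patterns is actually achieved by a shard, rather than being excluded by some compatibility condition among the cuts. This is where one must use that the cutting hyperplanes $H_{i,k}, H_{k,j}$ for distinct intermediate $k$ impose conditions on disjoint coordinate differences along $H_{i,j}$; since $A_2$ is the only non-orthogonal rank two type here, no further interactions can occur, so I expect this to go through cleanly once set up carefully. The arithmetic at the end is purely mechanical.
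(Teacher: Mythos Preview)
Your proposal is correct and follows essentially the same approach as the paper, which simply states the proposition as an immediate consequence of the rank two subarrangement analysis without further argument. You are in fact more thorough than the paper: you explicitly justify the independence of the $j-i-1$ sign choices and verify the closed form for the sum, both of which the paper leaves to the reader.
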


Now that we have identified the shards, we will describe how permutations encode shard intersections.

Recall that the Coxeter group $A_{n-1}$ is isomorphic to the group $S_n$ of permutations of $\{1,\ldots, n\}$. We write permutations in one line notation, i.e., $w =w(1)\cdots w(n) \in S_n$. Recall that a \emph{descent} of a permutation is a letter $w(i)$ such that $w(i) > w(i+1)$ and an \emph{ascent} is a letter $w(i)$ such that $w(i) < w(i+1)$. Let $d(w)$ denote the number of descents of $w$. A (maximal) \emph{decreasing run} of a permutation is a word found between consecutive ascents. We can highlight the decreasing runs by inserting vertical bars in ascent positions. For example, if $w = 283964517$, we write $w = 2|83|964|51|7$, and we have $d(w) = 4$.

Visually, we represent the permutation as an array with a mark in column $i$ (from left to right), row $j$ (from bottom to top) if $w(i) = j$. We group together any decreasing runs into blocks with thick lines:
\begin{equation}\label{eq:array}
\begin{tikzpicture}[>=stealth',bend angle=45,auto,xscale=1, yscale=.7]
\tikzstyle{state}=[circle,draw=black,minimum size=4mm]
\draw (1,8) node[state] (8) {$8$};
\draw (2,3) node[state] (3) {$3$};
\draw (3,9) node[state] (9) {$9$};
\draw (4,6) node[state] (6) {$6$};
\draw (5,4) node[state] (4) {$4$};
\draw (6,5) node[state] (5) {$5$};
\draw (7,1) node[state] (1) {$1$};
\draw (0,2) node[state] (2) {$2$};
\draw (8,7) node[state] (7) {$7$};
\draw[line width=4] (8)--(3);
\draw[line width=4] (9)--(6);
\draw[line width=4] (6)--(4);
\draw[line width=4] (5)--(1);
\draw[dashed, ->] (1.5,6)--(6);
\draw[dashed, ->] (4.5,5)--(5);
\draw[dashed, ->] (3.75,7)--(7);
\draw[dashed, ->] (2)--(6.65,2);
\end{tikzpicture}
\end{equation}
If it is possible to draw a horizontal line to connect two decreasing runs, the block on the left is considered less than the block on the right. This gives a pre-order on $\{1,\ldots,n\}$ that Bancroft calls a \emph{permutation pre-order}.

In all that follows we will pass freely from thinking of $w\in S_n$ as a word and as a permutation pre-order.

As Bancroft shows, permutation pre-orders neatly encode type $A_{n-1}$ shard intersections \cite{B} as follows. For any $w \in S_n$, define the \emph{cone} of $w$, $C(w)$, as the set of points $(x_1,\ldots,x_n) \in V$ such that:
\begin{itemize}
\item if $i$ and $j$ are in the same block in $w$, then $x_i = x_j$,
\item if $i < k < j$ and $k$ is not in the same block as $i$ and $j$, then:
\begin{itemize}
\item[a)]  $x_k \leq x_i = x_j$ if $k$ appears to the left of $i$ in $w$, and
\item[b)] $x_i = x_j \leq x_k$ if $k$ appears to the right of $i$ in $w$.
\end{itemize}
\end{itemize}
The example shown in \eqref{eq:array} then corresponds to the cone of all points satisfying \[ x_8 = x_3 \geq x_9 = x_6 = x_4 \geq x_7, \]
\[ x_9 = x_6 = x_4 \geq x_5=x_1 \leq x_2, \quad \mbox{ and } \]
\[ x_1 + x_2+ x_3+x_4+x_5+x_6+x_7 + x_8+x_9 = 0.\]
At the extremes we have $C(1|2|\cdots|n) = V$ and $C(n\cdots 21)= (0,0,\ldots,0)$. Notice that the dimension of $C(w)$ is equal to one less than the number of decreasing runs in $w$ (since the sum of the coordinates is zero), and hence codimension corresponds to descent number.

\begin{obs}\label{obs:des}
For any $w \in S_n$, \[ d(w) = n-1-\dim(C(w)).\] In particular, shards correspond to elements with one descent.
\end{obs}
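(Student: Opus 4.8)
The proof is the dimension count foreshadowed just before the statement, made precise. Write $r=r(w)$ for the number of maximal decreasing runs of $w$. Two consecutive runs are separated by exactly one ascent, and every ascent sits between two consecutive runs, so $w$ has $r-1$ ascents; since each of the $n-1$ positions $1,\dots,n-1$ is either an ascent or a descent, $d(w)=n-r$. Hence the asserted identity $d(w)=n-1-\dim C(w)$ is equivalent to $\dim C(w)=r-1$, which is what I would establish.

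For the upper bound, let $L_w\subseteq V$ be the linear subspace defined by the equations ``$x_i=x_j$ whenever $i,j$ lie in a common block of $w$''; by its first defining clause, $C(w)\subseteq L_w$. Coordinatizing $L_w$ by the $r$ common values $t_1,\dots,t_r$ of the coordinates on the blocks $B_1,\dots,B_r$, the condition $x\in V$ becomes the single equation $\sum_{\ell=1}^{r}|B_\ell|\,t_\ell=0$, with all coefficients positive and hence nontrivial; so $\dim L_w=r-1$ and $\dim C(w)\le r-1$.

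For the reverse inequality it is enough to produce one point $x^{\circ}\in C(w)$ at which every inequality among conditions (a) and (b) holds strictly: then a relatively open neighborhood of $x^{\circ}$ in $L_w$ lies in $C(w)$, so $\dim C(w)=\dim L_w=r-1$. Such an $x^{\circ}$ is available because of the following structural point. A maximal decreasing run is a contiguous factor of $w$, so the blocks $B_1,\dots,B_r$, listed from left to right, occupy disjoint intervals of positions. Consequently, if $i,j$ lie in one block $B$ and $k\notin B$ with $i<k<j$, then the position of $k$ falls outside the position-interval of $B$, so the block containing $k$ lies entirely to the left of $B$ or entirely to its right; and ``$k$ occurs to the left of $i$'' (for one, equivalently every, admissible $i\in B$) precisely when $k$'s block precedes $B$. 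Therefore each relation contributed by (a) or (b) compares the coordinate values of two blocks $B_a,B_b$ with $a<b$, and checking the two cases shows the comparison always runs the same way relative to this order — say, the earlier block always receives the smaller value (the only feature used is the uniformity of the direction, which in particular rules out a pair of blocks being forced to share a coordinate value). Now choose reals $c_1<c_2<\dots<c_r$, put $x^{\circ}_i=c_\ell$ for $i\in B_\ell$, and subtract the constant $\tfrac{1}{n}\sum_{\ell}|B_\ell|c_\ell$ from every coordinate so that $\sum_i x^{\circ}_i=0$; then $x^{\circ}\in L_w$ and every (a)/(b) inequality is strict, as needed.

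Combining the two bounds gives $\dim C(w)=r-1$, i.e.\ $d(w)=n-1-\dim C(w)$. For the closing remark: the shards are the codimension-one shard intersections — the full-dimensional cones contained in the hyperplanes of $\mathcal{A}$ — and $\dim V=n-1$, so under $w\leftrightarrow C(w)$ they correspond exactly to the $w$ with $\dim C(w)=n-2$, that is, with $d(w)=1$. The one place that calls for care is the structural point in the previous paragraph: verifying that ``to the left of $i$'' is independent of the chosen $i\in B$, and that the relations coming from (a) and from (b) never conflict. Everything else is routine.
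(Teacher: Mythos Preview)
Your argument is correct and is exactly the dimension count the paper has in mind; the paper treats this as self-evident and offers only the one-line remark preceding the Observation (``the dimension of $C(w)$ is equal to one less than the number of decreasing runs in $w$ (since the sum of the coordinates is zero)''), whereas you have carefully supplied the missing verification that the inequalities (a) and (b) are mutually consistent so that $C(w)$ is full-dimensional in its linear span $L_w$. The structural point you flag---that ``$k$ to the left of $i$'' depends only on whether $k$'s block lies left or right of $B$, and that both cases yield the uniform rule ``earlier block $\le$ later block''---is precisely what makes the interior point $x^{\circ}$ exist, and your argument for it is sound.
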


Thus Proposition \ref{prp:Ashards} gives a roundabout way to show there are $2^n-n-1$ permutations with one descent.

For completeness we prove the cones so described are in fact intersections of shards.

\begin{prp}\label{prp:Awelldef}(Permutation pre-orders correspond to shard intersections)
Every intersection of $A_{n-1}$ shards equals $C(w)$ for some $w \in S_n$, and for any $w \in S_n$, the cone corresponding to $w$ is an intersection of shards.
\end{prp}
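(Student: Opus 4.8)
The plan is to prove both directions by understanding precisely what the cutting relations of Section~\ref{sec:poset} force on an intersection of shards, and matching these to the defining inequalities of $C(w)$.

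\medskip

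\noindent\textbf{Cones are shard intersections.} First I would show that every $C(w)$ is an intersection of shards. Fix $w \in S_n$ and consider a decreasing run (block) containing letters that span from column $i$ to column $j$; within a block all coordinates are equal, so for a pair of consecutive letters in the run, say occupying values forming an inversion across the hyperplane $H_{i',j'}$, the relevant constraint is an equality, which is stronger than lying on $H_{i',j'}$. The point is that $C(w)$ lies on a well-defined set of hyperplanes $H_{i,j}$ (those with $i,j$ in the same block) and, for the remaining hyperplanes that ``cut'' these, a choice of side. Using the classification of rank-two subarrangements of $\mathcal{A}(A_{n-1})$ from Figure~\ref{fig:Asubs} and Proposition~\ref{prp:Ashards}, the only cutting relations are of the form: $H_{i,k}$ is cut by basic hyperplanes $H_{i,j}$ and $H_{j,k}$ for $i<j<k$, splitting according to whether $x_j \le x_i = x_k$ or $x_i = x_k \le x_j$. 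I would check that conditions (a) and (b) in the definition of $C(w)$ select, for each such triple with $i,j$ (equivalently $i,k$) forced equal by block membership and $k$ (resp.\ $j$) outside the block, exactly one of these half-spaces — namely the one determined by whether the middle index appears to the left or right — and that this choice is consistent (a legitimate shard, not an empty intersection) because it is realized by the actual point configuration coming from the word $w$. Intersecting $H_{i,j}$ over all same-block pairs with these chosen half-spaces over all relevant triples gives exactly $C(w)$, exhibiting it as an intersection of shards.

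\medskip

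\noindent\textbf{Shard intersections are cones.} Conversely, given an arbitrary intersection $X$ of shards, I would recover a permutation pre-order from it. The intersection $X$ determines a subspace arrangement data: the set $S$ of hyperplanes $H_{i,j}$ containing $X$, and for each shard a sign condition. The equalities $x_i = x_j$ for $H_{i,j} \in S$ generate an equivalence relation on $\{1,\dots,n\}$ whose classes will be the blocks. On $X$, the coordinates are therefore constant on blocks and, since $X$ is full-dimensional inside the intersection of the hyperplanes in $S$ (shards and their intersections are full-dimensional in the subspaces they lie on, per the construction in Section~\ref{sec:poset}), the induced linear order (or pre-order, reflecting ties forced by further sign conditions collapsing) on the blocks is well-defined; a generic point of $X$ orders the block-values. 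I would then argue that reading off which block-value is larger, for each cutting triple, reconstructs a word $w$ (arrange each block as a decreasing run, order the runs by their common coordinate value, breaking residual ties via the pre-order structure) for which $C(w) = X$: the half-space chosen by the shard through a triple $i<j<k$ is exactly condition (a) or (b) depending on whether the middle index's block precedes or follows. The consistency of these choices across overlapping triples is automatic because they all came from a single point of $X$.

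\medskip

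\noindent\textbf{Main obstacle.} The delicate point is the bookkeeping in the reverse direction: showing that the pre-order extracted from $X$ genuinely yields a \emph{word} $w$ (i.e.\ that the sign data of the shards is globally consistent with a single one-line permutation, with blocks being contiguous decreasing runs) and that no information is lost — in particular that two distinct intersections give distinct $C(w)$, and that the codimension/descent count matches (Observation~\ref{obs:des}). Concretely one must verify that the local half-space choices coming from all rank-two subarrangements containing a given triple cannot conflict, which is where the specific geometry of $\mathcal{A}(A_{n-1})$ (only $A_1 \times A_1$ and $A_2$ rank-two subarrangements) does the work: there are simply no ``long-range'' incompatibilities to worry about, so a consistent assignment on triples always globalizes. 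I would make this precise by induction on $n$ or by directly exhibiting the generic point of $X$ and reading $w$ off from it, which sidesteps the consistency check entirely since the point furnishes the word.
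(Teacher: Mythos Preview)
Your first direction (that each $C(w)$ is an intersection of shards) is essentially the paper's argument: for each pair $i,j$ in a common block of $w$, take the shard of $H_{i,j}$ determined by the side conditions (a)/(b) for every intermediate $k$, and intersect. The paper spells this out a bit more explicitly by naming the one-descent permutation that encodes each such shard, but the content is the same.

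The reverse direction is where your proposal has a real gap. You propose to recover $w$ from a shard intersection $X$ by taking a generic point of $X$ and ordering the blocks by their common coordinate values. This does not work: when two blocks $B,B'$ have no element of one lying strictly between two elements of the other, the cone $X$ imposes no inequality between $x_B$ and $x_{B'}$, so different generic points give different total orders on the blocks. Worse, most of those orders do not yield a permutation whose maximal decreasing runs are the given blocks. For instance, if $X=\{x_1=x_3\le x_2,\ x_4=x_5\}$, the generic point $(0,1,0,-10,-10)$ orders the blocks as $\{4,5\}<\{1,3\}<\{2\}$, but concatenating the decreasing runs $54,31,2$ gives $54312$, whose runs are $5431|2$ --- the wrong block structure. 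Only one ordering (here the one giving $31|2|54$) produces a valid word, and your generic-point recipe does not single it out.

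The paper avoids this difficulty by arguing inductively on the number of shards in the intersection rather than on $n$ or via a generic point. Given $\bigcap_{i<r}\Sigma_i=C(u)$ and a new shard $\Sigma_r=C(v)$ on $H_{a,b}$, one merges the blocks of $u$ containing $a$ and $b$ (together with any blocks forced to collapse by conflicting side conditions) and reads off $w$ directly; this is constructive and bypasses the question of which block ordering is the ``right'' one. If you want to salvage your approach, you would need a separate combinatorial lemma: given a set partition and a consistent system of signs on straddling triples coming from a shard intersection, there is a unique permutation whose pre-order realizes that data. That lemma is true, but it is precisely the content you have not supplied.
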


\begin{proof}
The identity permutation corresponds to the empty intersection, i.e., $V \cong \R^{n-1}$, while if $w$ has only one descent, it corresponds to a shard itself and we are done.

Now we will show that for any collection of shards $\{\Sigma_1,\ldots,\Sigma_r\}$, there is an element $w$ such that \[ \bigcap_{i=1}^r \Sigma_i = C(w).\] For induction, suppose the result holds for any intersection of fewer shards. In particular, $\bigcap_{i = 1}^{r-1} \Sigma_i = C(u)$ for some $u$. Let $\Sigma_r = C(v)$ be a new shard with $x_a = x_b$. Then $\Sigma_r \cap C(u) = C(w)$, where $w$ is the permutation formed by merging the blocks of $u$ containing $a$ and $b$, along with any blocks between them. Moreover, if $a < k < b$ and $k$ was left of $a$ in $u$ but right of $a$ in $v$, then $k$ is in the same block with $a$ and $b$ in $w$.

For example, taking the pre-order in \eqref{eq:array} with the shard $31|2|4|5|6|7|8|9$ we get:
\[ \begin{tikzpicture}
\tikzstyle{state1}=[rectangle,draw=black,scale=.5]
\draw(0,0) node[state1]
{\begin{tikzpicture}[>=stealth',bend angle=45,auto,scale=.7]
\tikzstyle{state}=[circle,draw=black,minimum size=4mm]
\draw (8,8) node[state] (8) {$8$};
\draw (1,3) node[state] (3) {$3$};
\draw (9,9) node[state] (9) {$9$};
\draw (6,6) node[state] (6) {$6$};
\draw (4,4) node[state] (4) {$4$};
\draw (5,5) node[state] (5) {$5$};
\draw (2,1) node[state] (1) {$1$};
\draw (3,2) node[state] (2) {$2$};
\draw (7,7) node[state] (7) {$7$};
\draw[line width=4] (3)--(1);
\draw[dashed, ->] (1.5,2)--(2);
\end{tikzpicture}};
\draw(2,0) node {$\bigcap$};
\draw(4,0) node[state1]
{\begin{tikzpicture}[>=stealth',bend angle=45,auto,scale=.7]
\tikzstyle{state}=[circle,draw=black,minimum size=4mm]
\draw (1,8) node[state] (8) {$8$};
\draw (2,3) node[state] (3) {$3$};
\draw (3,9) node[state] (9) {$9$};
\draw (4,6) node[state] (6) {$6$};
\draw (5,4) node[state] (4) {$4$};
\draw (6,5) node[state] (5) {$5$};
\draw (7,1) node[state] (1) {$1$};
\draw (0,2) node[state] (2) {$2$};
\draw (8,7) node[state] (7) {$7$};
\draw[line width=4] (8)--(3);
\draw[line width=4] (9)--(6);
\draw[line width=4] (6)--(4);
\draw[line width=4] (5)--(1);
\draw[dashed, ->] (1.5,6)--(6);
\draw[dashed, ->] (4.5,5)--(5);
\draw[dashed, ->] (3.75,7)--(7);
\draw[dashed, ->] (2)--(6.65,2);
\end{tikzpicture}};
\draw (6,0) node {$=$};
\draw(8,0) node[state1]
{\begin{tikzpicture}[>=stealth',bend angle=45,auto,scale=.7]
\tikzstyle{state}=[circle,draw=black,minimum size=4mm]
\draw (2,8) node[state] (8) {$8$};
\draw (6,3) node[state] (3) {$3$};
\draw (1,9) node[state] (9) {$9$};
\draw (3,6) node[state] (6) {$6$};
\draw (5,4) node[state] (4) {$4$};
\draw (4,5) node[state] (5) {$5$};
\draw (8,1) node[state] (1) {$1$};
\draw (7,2) node[state] (2) {$2$};
\draw (9,7) node[state] (7) {$7$};
\draw[line width=4] (9)--(8)--(6)--(5)--(4)--(3)--(2)--(1);
\draw[dashed, ->] (2.5,7)--(7);
\end{tikzpicture}};
\end{tikzpicture}
 \]

We have shown that an intersection of shards corresponds to $C(w)$ for some $w$. Now we will show that each cone $C(w)$ corresponds to an intersection of shards. 

Permutations with one descent correspond to shards themselves, so suppose $w$ has more than one descent. The following describes a collection of shards whose intersection gives the cone $C(w)$. Given two elements in a decreasing run, say $w(i) > w(j)$ (and so $i < j$), we let $\Sigma$ be the shard with $x_{w(i)} = x_{w(j)}$ and such that for each $k$ with $w(j) < k < w(i)$, we put $x_k \leq x_{w(i)}$ if $w^{-1}(k) < i$, $x_{w(i)} \leq x_k$ otherwise.

Doing this for all pairs of elements in decreasing runs yields a collection of shards $\Sigma$, each of which contains $C(w)$ and such that all the conditions imposed by $w$ are articulated by some shard.

To illustrate, let $w = 2|83|964|51|7$. Then the collection of shards we get is:
\begin{align*}
C(2|83|964|51|7) &= C(1|2|83|4|5|6|7|9) \cap C(1|2|3|4|5|8|96|7) \\
&\quad \cap C(1|2|3|8|94|5|6|7) \cap C(1|2|3|64|5|7|8|9) \cap C(2|3|4|51|6|7|8|9).
\end{align*}

This completes the proof of the Proposition.
\end{proof}

We remark that while the idea in the proof above shows $C(w)$ is formed as an intersection of a set of shards, the set of shards we generate is neither necessarily minimal nor maximal. In our example, intersecting with the shard $1|2|3|6|8|94|5|7$ would not change the intersection. Also, we could have removed the shard $1|2|3|8|94|5|6|7$ and still obtained $C(w)$.

We can now give a partial order on $S_n$ by reverse inclusion of the corresponding subsets of $V$.

\begin{defn}[Shard intersection order on permutations]\label{def:order}
Let $u, v\in S_n$. Then $u \leq v$ in the shard intersection order if and only if $C(v) \subseteq C(u)$. 
\end{defn}

This definition can be stated combinatorially in terms of permutation pre-orders as follows. Verification of equivalence is straightforward.

\begin{prp}\label{prp:order}
In terms of permutation pre-orders, $u \leq v$ if and only if:
\begin{itemize}
\item (Refinement) $u$ refines $v$ as a set partition, and
\item (Consistency) if $i$ and $j$ are in the same block in $u$, and $i < k < j$ (with $k$ not in the same block as $i$ and $j$ in $u$), then either $k$ is in the same block as $i$ and $j$ in $v$, or $k$ is on the same side of $i$ and $j$ in $v$ as in $u$. 
\end{itemize}
\end{prp}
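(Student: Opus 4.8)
The statement claims that the geometric order $C(v)\subseteq C(u)$ from Definition \ref{def:order} is captured combinatorially by the two conditions Refinement and Consistency. The plan is to prove the biconditional by unwinding the defining inequalities of $C(w)$ given just before Observation \ref{obs:des}. A cone $C(w)$ is the solution set in $V$ of a family of equalities $x_i = x_j$ (for $i,j$ in a common block) together with weak inequalities relating each ``separating'' coordinate $x_k$ to the common value of the block on either side of it. Containment $C(v)\subseteq C(u)$ holds if and only if every defining (in)equality of $C(u)$ is implied on all of $C(v)$ by the defining system of $C(v)$.

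First I would handle the forward direction. Assume $C(v)\subseteq C(u)$. To get Refinement: if $i,j$ lie in the same block of $u$, then $x_i=x_j$ holds identically on $C(u)$, hence on $C(v)$; since $C(v)$ is full-dimensional inside the subspace it spans and its block structure is exactly the set of coordinate equalities it forces (one must note here that $C(v)$ does not force any coordinate equalities beyond those coming from its blocks — this is where one uses that $C(v)$ contains a relatively interior point with all the weak inequalities strict), it follows that $i$ and $j$ are in the same block of $v$. That is, $u$ refines $v$. To get Consistency: suppose $i,j$ are in a common block of $u$ with $i<k<j$ and $k$ outside that block in $u$; say (case a) $k$ is to the left of $i$ in $u$, so $C(u)$ imposes $x_k \le x_i = x_j$. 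This inequality must hold on all of $C(v)$. If $k$ is in the same block as $i,j$ in $v$ we are done; otherwise $k$ separates $i$ and $j$ in $v$ as well (since $i,j$ share a block in $v$ by Refinement and $i<k<j$), so $C(v)$ imposes either $x_k \le x_i=x_j$ or $x_i=x_j\le x_k$. Only the former is compatible with $x_k\le x_i$ holding on a full-dimensional cone (the latter would force $x_k=x_i$ identically, contradicting that the block structure of $v$ carries all forced equalities), so $k$ is on the left in $v$ too — the same side as in $u$. The right-hand case is symmetric.

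For the converse, assume Refinement and Consistency; I would show each defining constraint of $C(u)$ holds on $C(v)$. A block equality $x_i=x_j$ of $u$: by Refinement $i,j$ share a block of $v$, so $x_i=x_j$ on $C(v)$. A separating inequality of $u$, say $x_k\le x_i=x_j$ with $k$ left of $i$ in $u$: by Refinement $i,j$ share a block $\mathcal{B}$ of $v$. If $k\in\mathcal{B}$ then $x_k=x_i$ on $C(v)$, which certainly implies $x_k\le x_i$. If $k\notin\mathcal{B}$, then since $i<k<j$, $k$ separates $i$ and $j$ in $v$, and Consistency forces $k$ to be on the same (left) side in $v$, so $C(v)$ imposes exactly $x_k\le x_i=x_j$. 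Either way the constraint holds; the right-hand case is symmetric. Hence $C(v)\subseteq C(u)$.

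The main obstacle — the point that needs genuine care rather than bookkeeping — is the repeatedly invoked fact that the block partition of $w$ is precisely the partition of $\{1,\dots,n\}$ induced by coordinate-equality on $C(w)$, i.e., that the weak inequalities defining $C(w)$ are not secretly forcing extra equalities or extra sign relations. Equivalently, $C(w)$ has a point in its relative interior at which all the ``$\le$'' constraints are strict. I would establish this as a short preliminary lemma: order the blocks of $w$ by the permutation pre-order (which is acyclic, as the horizontal-line description makes clear), extend to a linear order, and assign the blocks strictly increasing values; one checks the separating conditions a) and b) are then satisfied strictly. Granting that lemma, everything else is the routine implication-chasing sketched above, and the "straightforward verification" the paper alludes to is exactly this.
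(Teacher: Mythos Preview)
Your proposal is correct and is exactly the ``straightforward verification'' the paper alludes to but omits; the paper gives no proof of this proposition beyond the sentence preceding it, so there is nothing to compare against except that your approach is the natural one. Your identification of the one nontrivial point---that the block partition of $w$ records \emph{precisely} the coordinate equalities forced on $C(w)$, established by exhibiting a relatively interior point via the left-to-right order of the decreasing runs---is apt, and the rest is indeed bookkeeping.
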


Intuitively, the pre-order of $v$ may be obtained by merging some blocks in the pre-order of $u$ while maintaining consistent relations. For example, although we can merge two blocks of $1|2|5|73|4|6|8$ to obtain $1|2|54|73|6|8$, they are not comparable in the partial order since in one case $4$ is right of $7$ while in the other it is to the left. On the other hand,
$1|2|5|73|4|6|8 < 2|5|73|4|861$, i.e.,  
\[
\begin{tikzpicture}
\tikzstyle{state1}=[rectangle,draw=black]
\draw (0,0) node[state1]
 {\begin{tikzpicture}[>=stealth',bend angle=45,auto,scale=.6]
 \tikzstyle{state}=[circle,draw=black,minimum size=4mm]
 \draw (1,1) node[state] (1) {$1$};
 \draw (2,2) node[state] (2) {$2$};
 \draw (3,5) node[state] (3) {$5$};
 \draw (4,7) node[state] (4) {$7$};
 \draw (5,3) node[state] (5) {$3$};
 \draw (6,4) node[state] (6) {$4$};
 \draw (7,6) node[state] (7) {$6$};
 \draw (8,8) node[state] (8) {$8$};
 \draw[line width=4] (4)--(5);
 \draw[dashed, ->] (3)--(4.45,5);
 \draw[dashed, ->] (4.75,4)--(6);
 \draw[dashed, ->] (4.35,6)--(7);
 \end{tikzpicture}};
\draw (3,0) node {$<$};
\draw (6,0) node[state1]
 {\begin{tikzpicture}[>=stealth',bend angle=45,auto,scale=.6]
 \tikzstyle{state}=[circle,draw=black,minimum size=4mm]
 \draw (1,2) node[state] (1) {$2$};
 \draw (2,5) node[state] (2) {$5$};
 \draw (3,7) node[state] (3) {$7$};
 \draw (4,3) node[state] (4) {$3$};
 \draw (5,4) node[state] (5) {$4$};
 \draw (6,8) node[state] (6) {$8$};
 \draw (7,6) node[state] (7) {$6$};
 \draw (8,1) node[state] (8) {$1$};
 \draw[line width=4] (3)--(4);
 \draw[line width=4] (6)--(7);
 \draw[line width=4] (7)--(8);
 \draw[dashed, ->] (1)--(7.7,2);
 \draw[dashed, ->] (2)--(3.45,5);
 \draw[dashed, ->] (3.65,4)--(5);
 \draw[dashed, ->] (5)--(7.25,4);
 \end{tikzpicture}}; 
\end{tikzpicture}
\]
because we can obtain the pre-order on the right by merging the $8$, the $6$, and the $1$. This new block had to be to the right of the block with the $3$ and the $7$ because the $6$ was already to the right.  The new block has to be comparable to the $2$ and comparable to the $4$, but it could have appeared to the right or the left of the $2$, and to the right or the left of the $4$. These choices give rise to other permutation pre-orders (with the same set of blocks) that lie above $1|2|5|73|4|6|8$, namely, $5|73|4|861|2$, $2|5|73|861|4$, and $5|73|861|2|4$.

The intersection lattice is ranked by codimension, so by Observation \ref{obs:des} we have the following.

\begin{obs}\label{obs:rankA}
The rank of a permutation $w$ in the shard intersection order $(S_n,\leq)$ is given by descent number: $\rk(w) = d(w)$.
\end{obs}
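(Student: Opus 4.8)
My plan is to transport the grading of the shard intersection lattice through the cone model, using three ingredients already in hand: the order isomorphism $w\mapsto C(w)$ of Proposition~\ref{prp:Awelldef} together with Definition~\ref{def:order}, Reading's theorem \cite{R} that the shard intersection order is graded, and the dimension count of Observation~\ref{obs:des}. Under $w\mapsto C(w)$ the poset $(S_n,\le)$ is identified with the poset of intersections of type $A_{n-1}$ shards ordered by reverse containment; its minimum is the identity permutation $1|2|\cdots|n$, with $C(1|2|\cdots|n)=V\cong\R^{n-1}$, and its maximum is $w_0=n\cdots21$, with $C(w_0)=\{0\}$. Put $c(w):=\operatorname{codim}_V C(w)=(n-1)-\dim C(w)$. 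By Observation~\ref{obs:des}, $c(w)=d(w)$, so it suffices to show that $c$ is the rank function of $(S_n,\le)$.

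I would deduce this from gradedness by an elementary counting argument. The statistic $c$ is strictly order preserving, since $u<v$ forces $C(v)\subsetneq C(u)$, hence $\dim C(v)<\dim C(u)$ and $c(u)<c(v)$. Moreover every cone $C(w)$ contains the origin, so $0\le c(w)\le n-1$ for all $w$, with $c=0$ at the identity and $c=n-1$ at $w_0$. Exhibit one saturated chain from $1|2|\cdots|n$ to $w_0$ of length $n-1$ --- for instance, repeatedly merge the two topmost blocks of the permutation pre-order, which raises the descent number by exactly one at each step and reaches $w_0$ after $n-1$ merges. Since $(S_n,\le)$ is graded, every maximal chain has this common length $n-1$. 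Along any such chain $x_0\lessdot x_1\lessdot\cdots\lessdot x_{n-1}$ the values $0=c(x_0)<c(x_1)<\cdots<c(x_{n-1})=n-1$ are $n$ strictly increasing integers in $\{0,1,\dots,n-1\}$, hence $c(x_i)=i$ for all $i$. As every element of the poset lies on some maximal chain, $c$ is the rank function, i.e. $\rk(w)=c(w)=d(w)$.

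The only non-formal input is the gradedness of $(S_n,\le)$, which I would simply quote from Reading \cite{R}; everything else is bookkeeping. The points requiring a little care are the identification $C(w_0)=\{0\}$ and the observation that any permutation pre-order with two or more blocks has a positive-dimensional cone (so that $w_0$ is genuinely the maximum and $c$ attains $n-1$ only there), together with the construction of the length-$(n-1)$ saturated chain. If one wished to bypass Reading's theorem, the alternative would be to check directly, from the combinatorial cover relation implicit in Proposition~\ref{prp:order}, that a minimal block-merge decreases the number of decreasing runs by exactly one; I expect this to be the genuinely delicate step, because merging two blocks can force intermediate blocks to be absorbed as well, so routing through the grading result of \cite{R} is the cleaner path.
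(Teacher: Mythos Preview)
Your argument is correct and is essentially the paper's own: the paper simply records ``the intersection lattice is ranked by codimension'' and combines this with Observation~\ref{obs:des}, whereas you quote gradedness from \cite{R} and then recover the codimension grading by a pigeonhole along a length-$(n-1)$ saturated chain. One small point deserves a word of justification: the implication ``$C(v)\subsetneq C(u)\Rightarrow\dim C(v)<\dim C(u)$'' is not automatic for arbitrary cones, but here it follows because, by Proposition~\ref{prp:order}, $u<v$ forces the block partition of $u$ to \emph{strictly} refine that of $v$ (equal block partitions together with consistency would force $u=v$), and the dimension of $C(w)$ is one less than the number of blocks.
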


\subsubsection{Noncrossing partitions}\label{sec:noncrossing}

Reading \cite{R} shows generally that the lattice of noncrossing partitions of type $W$ is an induced sublattice of $(W, \leq)$. For $W=S_n$, this fact can be realized by restricting to the set of \emph{$231$-avoiding permutations}, which are well-known to be in bijection with classical noncrossing partitions.

We say $w \in S_n$ is $231$-avoiding if there is no triple of indices $i < j < k$ such that $w(k) < w(i) < w(j)$. Let $S_n(231)$ denote the set of $231$-avoiding permutations. For example, $51243 \in S_5(231)$ and $31524 \notin S_5(231)$. 

A \emph{noncrossing partition} $\pi = \{ R_1, R_2,\ldots, R_k\}$, is a set partition of $\{1,2,\ldots,n\}$, such that if $\{a, c\} \subseteq R_i$ and $\{b,d\} \subseteq R_j$, with $1\leq a < b < c < d\leq n$, then $i=j$. That is, two pairs of numbers from distinct blocks cannot be interleaved.  Let $NC(n)$ denote the set of all noncrossing partitions of $\{1,2,\ldots,n\}$. For example, $\{ \{1,5\}, \{2\}, \{3,4\}\} \in NC(5)$, while $\{ \{1,3\}, \{2,5\}, \{4\}\} \notin NC(5)$. 

The lattice of noncrossing partitions is the partially ordered set $(NC(n),\leq)$ with $\sigma\leq \tau$ if $\sigma$ refines $\tau$ as a set partition.

Define a bijection $\phi: S_n(231) \to NC(n)$ by mapping the decreasing runs of a permutation to blocks in a partition. See Figure \ref{fig:noncrossing}. Specifically, if $w = d_1| d_2 |\cdots |d_k$, where the $d_i$ are the blocks of decreasing runs of $w$, then letting $D_i$ denote the set of letters of $d_i$, we have $\phi(w) = \{ D_1, D_2,\ldots, D_k\}$.  For example, if $w = 421|3|765|98$, then \[\phi(w) = \{ \{1,2,4\}, \{3\}, \{5, 6, 7\}, \{8,9\}\}.\] The inverse map takes the blocks of $\pi$, lists each block in decreasing order, and then orders the blocks from left to right according to the smallest element in the block. For example, if \[\pi = \{ \{1,7,9\}, \{2,3\}, \{4,6\}, \{5\}, \{8\} \},\] then $\phi^{-1}(\pi) = 971|32|64|5|8.$

\begin{figure}[h]
\[
\begin{tikzpicture}[>=stealth',bend angle=45,auto]
\tikzstyle{state1}=[rectangle,draw=black,scale=.75]
\tikzstyle{state3}=[circle]
\draw (0,0) node[state1] (a) 
 {\begin{tikzpicture}
 \tikzstyle{state2}=[circle,draw=black,scale=1.5]
 \draw (1,9) node[state] (1) {$9$};
 \draw (2,7) node[state] (2) {$7$};
 \draw (3,1) node[state] (3) {$1$};
 \draw (4,3) node[state] (4) {$3$};
 \draw (5,2) node[state] (5) {$2$};
 \draw (6,6) node[state] (6) {$6$};
 \draw (7,4) node[state] (7) {$4$};
 \draw (8,5) node[state] (8) {$5$};
 \draw (9,8) node[state] (9) {$8$};
 \draw[line width=4] (1)--(2);
 \draw[line width=4] (2)--(3);
 \draw[line width=4] (4)--(5);
 \draw[line width=4] (6)--(7);
 \draw[dashed, ->] (1.5,8)--(9);
 \draw[dashed, ->] (2.25,6)--(6);
 \draw[dashed, ->] (2.75,3)--(4);
 \draw[dashed, ->] (6.5,5)--(8);
 \end{tikzpicture}};
\draw (5,0) node[scale=.75] (b) 
 {\begin{tikzpicture}
 \draw (0,1) node[left] (1) {$1$};
 \draw (0,2) node[left] (2) {$2$};
 \draw (0,3) node[left] (3) {$3$};
 \draw (0,4) node[left] (4) {$4$};
 \draw (0,5) node[left] (5) {$5$};
 \draw (0,6) node[left] (6) {$6$};
 \draw (0,7) node[left] (7) {$7$};
 \draw (0,8) node[left] (8) {$8$};
 \draw (0,9) node[left] (9) {$9$};
 \draw [line width=2] (9) to [out=0,in=0] (7);
 \draw [line width=2] (7) to [out=0,in=0] (1);
 \draw [line width=2] (6) to [out=0,in=0] (4);
 \draw [line width=2] (3) to [out=0,in=0] (2);
 \end{tikzpicture}};
\end{tikzpicture}
\]
\caption{The decreasing runs of a 231-avoiding permutation form a noncrossing partition.}\label{fig:noncrossing}
\end{figure}
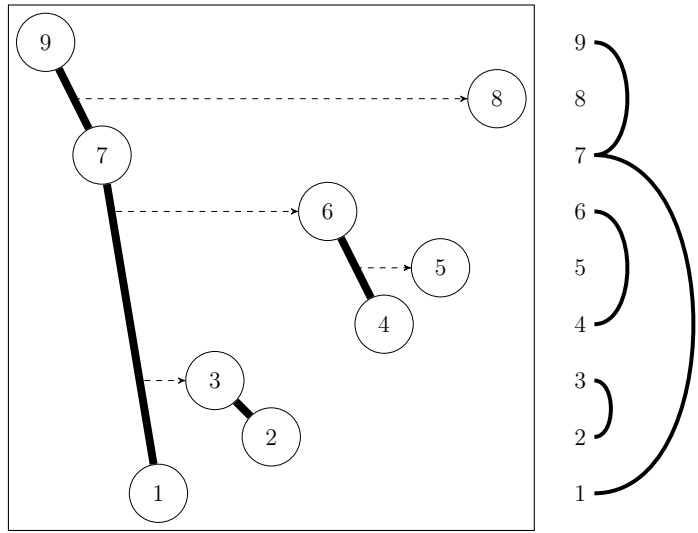

It is straightforward from the definitions that if $u \leq v$ in $(S_n, \leq)$, then $\phi(u) \leq \phi(v)$ in $(NC(n),\leq)$. It is only slightly more subtle to check that the converse is true. We have the following. 

\begin{prp}[\cite{R}, Theorem 8.5]\label{prp:sub}
The lattice of noncrossing partitions $(NC(n),\leq)$ is isomorphic to the induced sublattice $(S_n(231), \leq)$ inside $(S_n, \leq)$.
\end{prp}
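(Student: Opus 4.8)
The plan is to show that the bijection $\phi \colon S_n(231) \to NC(n)$ is an isomorphism of posets, i.e.\ that for $u,v \in S_n(231)$ one has $u \le v$ in $(S_n,\le)$ if and only if $\phi(u)\le\phi(v)$ in $(NC(n),\le)$. Note first that, by construction, the blocks of the permutation pre-order $u$ (its decreasing runs, regarded as sets of letters) are exactly the blocks of the set partition $\phi(u)$; hence the assertion ``$u$ refines $v$ as a set partition'' is literally the assertion ``$\phi(u)\le\phi(v)$ in $NC(n)$.'' The forward implication is then immediate from Proposition~\ref{prp:order}: if $u\le v$ then in particular $u$ refines $v$, so $\phi(u)\le\phi(v)$. (That $\phi$ is a well-defined bijection --- equivalently, that the decreasing runs of a $231$-avoiding permutation always form a noncrossing partition and that $\phi^{-1}$ returns a $231$-avoiding word --- is the classical correspondence; I would take it as known, since it is checked directly from the definitions and illustrated in Figure~\ref{fig:noncrossing}.)

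The heart of the converse is the following observation, which is exactly where $231$-avoidance enters. \emph{Claim:} if $w\in S_n(231)$ and $i<k<j$ with $i,j$ lying in a common decreasing run of $w$ and $k$ lying in a different run, then $k$ appears to the right of that run. Indeed, within the run the letters decrease, so $j$ precedes $i$ in $w$; if $k$ appeared to the left of the run it would also precede $j$, so in $w$ the letters $k,j,i$ would occur in this left-to-right order with values satisfying $i<k<j$, i.e.\ $w$ would contain the pattern $231$ --- a contradiction. (One can phrase this through the noncrossing picture instead: in $\phi(w)$ the block $R'$ containing $k$ cannot have an element $\le i$, lest it cross the block $R$ of $i$ and $j$; so $\min R' > i \ge \min R$ and $\phi^{-1}$ places $R$ to the left of $R'$.)

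Granting the Claim, the converse follows from Proposition~\ref{prp:order}. Let $u,v\in S_n(231)$ with $\phi(u)\le\phi(v)$. The Refinement condition holds by hypothesis. For Consistency, suppose $i,j$ lie in a common block of $u$, that $i<k<j$, and that $k$ lies in a different block of $u$. Applying the Claim to $u$ places $k$ to the right of the block of $i,j$ in $u$. Since $u$ refines $v$, the letters $i$ and $j$ lie in a common block of $v$ as well; if $k$ also lies in that block, Consistency holds trivially, and otherwise the Claim applied to $v$ places $k$ to the right of the block of $i,j$ in $v$, so $k$ is on the same (right) side in both pre-orders. Thus $u\le v$, completing the proof.

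I expect the Claim to be the only genuinely nontrivial step; everything else is unwinding the definition of $\phi$ and Proposition~\ref{prp:order}. It is worth noting that the Claim fails without the pattern restriction --- in the permutation $231$ itself the letter $2$ sits to the \emph{left} of the run $31$ --- which is precisely why the isomorphism is with the sublattice $S_n(231)$ and not with a larger collection of pre-orders.
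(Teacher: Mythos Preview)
Your argument is correct and follows exactly the route the paper sketches: the paper states that the forward implication is ``straightforward from the definitions'' and that the converse is ``only slightly more subtle,'' then cites \cite[Theorem 8.5]{R} without further detail. Your Claim --- that in a $231$-avoiding permutation any $k$ strictly between two letters of a common decreasing run must sit to the \emph{right} of that run --- is precisely the ``slightly more subtle'' point, and your deduction of Consistency from it via Proposition~\ref{prp:order} is clean and complete.
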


We note that in $(NC(n),\leq)$ a noncrossing partition $\pi$ with $k$ blocks has rank $n-1-k$. Since the number of blocks of $\pi$ corresponds to the number of ascents of $w=\phi^{-1}(\pi)$, $\rk(\pi) = d(w)$. Thus we find $(S_n(231),\leq)$ is ranked, with rank given by descent number.

In Figure \ref{fig:intposet}, we see the shard intersection order $(S_4,\leq)$, with the lattice of noncrossing partitions $(NC(4),\leq)\cong (S_n(231),\leq)$ highlighted in bold.

\begin{figure}[ht]
\begin{tikzpicture}[>=stealth',bend angle=45,auto, scale=.8]
\tikzstyle{state1}=[rectangle,draw=black,scale=.33]
\tikzstyle{state4}=[rectangle,draw=black,ultra thick,rounded corners,scale=.33]
\tikzstyle{state3}=[circle]
\draw (0,0) node[state4] (a0) 
 {\begin{tikzpicture}
 \tikzstyle{state2}=[circle,draw=black,scale=1.5]
 \draw (1,1) node[state2] (1) {$1$};
 \draw (2,2) node[state2] (2) {$2$};
 \draw (3,3) node[state2] (3) {$3$};
 \draw (4,4) node[state2] (4) {$4$};
 \end{tikzpicture}};
\draw (4,10) node[state4] (a1)
 {\begin{tikzpicture}
 \tikzstyle{state2}=[circle,draw=black, scale=1.5]
 \draw (1,2) node[state2] (1) {$2$};
 \draw (2,1) node[state2] (2) {$1$};
 \draw (3,3) node[state2] (3) {$3$};
 \draw (4,4) node[state2] (4) {$4$};
 \draw [line width=4] (1)--(2);
 \end{tikzpicture}};
\draw (4,8) node[state4] (a2)
 {\begin{tikzpicture}
 \tikzstyle{state2}=[circle,draw=black,scale=1.5]
 \draw (1,3) node[state2] (1) {$3$};
 \draw (2,1) node[state2] (2) {$1$};
 \draw (3,2) node[state2] (3) {$2$};
 \draw (4,4) node[state2] (4) {$4$};
 \draw [line width=4] (1)--(2);
 \draw [thick,dashed,->] (1.5,2)--(3);
 \end{tikzpicture}};
\draw (4,6) node[state1] (a3)
 {\begin{tikzpicture}
 \tikzstyle{state2}=[circle,draw=black,scale=1.5]
 \draw (1,2) node[state2] (1) {$2$};
 \draw (2,3) node[state2] (2) {$3$};
 \draw (3,1) node[state2] (3) {$1$};
 \draw (4,4) node[state2] (4) {$4$};
 \draw [line width=4] (2)--(3);
 \draw [thick,dashed,->] (1)--(2.5,2);
 \end{tikzpicture}};
\draw (4,4) node[state4] (a4)
 {\begin{tikzpicture}
 \tikzstyle{state2}=[circle,draw=black,scale=1.5]
 \draw (1,4) node[state2] (1) {$4$};
 \draw (2,1) node[state2] (2) {$1$};
 \draw (3,2) node[state2] (3) {$2$};
 \draw (4,3) node[state2] (4) {$3$};
 \draw [line width=4] (1)--(2);
 \draw [thick,dashed,->] (1.2,3)--(4);
 \draw [thick,dashed,->] (1.6,2)--(3);
 \end{tikzpicture}};
\draw (4,2) node[state1] (a5)
 {\begin{tikzpicture}
 \tikzstyle{state2}=[circle,draw=black,scale=1.5]
 \draw (1,2) node[state2] (1) {$2$};
 \draw (2,4) node[state2] (2) {$4$};
 \draw (3,1) node[state2] (3) {$1$};
 \draw (4,3) node[state2] (4) {$3$};
 \draw [line width=4] (2)--(3);
 \draw [thick,dashed,->] (1)--(2.6,2);
 \draw [thick,dashed,->] (2.3,3)--(4);
 \end{tikzpicture}}; 
\draw (4,0) node[state1] (a6)
 {\begin{tikzpicture}
 \tikzstyle{state2}=[circle,draw=black,scale=1.5]
 \draw (1,3) node[state2] (1) {$3$};
 \draw (2,4) node[state2] (2) {$4$};
 \draw (3,1) node[state2] (3) {$1$};
 \draw (4,2) node[state2] (4) {$2$};
 \draw [line width=4] (2)--(3);
 \draw [thick,dashed,->] (1)--(2.3,3);
 \draw [thick,dashed,->] (2.6,2)--(4);
 \end{tikzpicture}}; 
\draw (4,-2) node[state1] (a7)
 {\begin{tikzpicture}
 \tikzstyle{state2}=[circle,draw=black,scale=1.5]
 \draw (1,2) node[state2] (1) {$2$};
 \draw (2,3) node[state2] (2) {$3$};
 \draw (3,4) node[state2] (3) {$4$};
 \draw (4,1) node[state2] (4) {$1$};
 \draw [line width=4] (3)--(4);
 \draw [thick,dashed,->] (2)--(3.2,3);
 \draw [thick,dashed,->] (1)--(3.6,2);
 \end{tikzpicture}}; 
\draw (4,-4) node[state4] (a8)
 {\begin{tikzpicture}
 \tikzstyle{state2}=[circle,draw=black,scale=1.5]
 \draw (1,1) node[state2] (1) {$1$};
 \draw (2,3) node[state2] (2) {$3$};
 \draw (3,2) node[state2] (3) {$2$};
 \draw (4,4) node[state2] (4) {$4$};
 \draw [line width=4] (2)--(3);
 \end{tikzpicture}}; 
\draw (4,-6) node[state4] (a9)
 {\begin{tikzpicture}
 \tikzstyle{state2}=[circle,draw=black,scale=1.5]
 \draw (1,1) node[state2] (1) {$1$};
 \draw (2,4) node[state2] (2) {$4$};
 \draw (3,2) node[state2] (3) {$2$};
 \draw (4,3) node[state2] (4) {$3$};
 \draw [line width=4] (2)--(3);
 \draw [thick,dashed,->] (2.5,3)--(4);
 \end{tikzpicture}}; 
\draw (4,-8) node[state1] (a10)
 {\begin{tikzpicture}
 \tikzstyle{state2}=[circle,draw=black,scale=1.5]
 \draw (1,1) node[state2] (1) {$1$};
 \draw (2,3) node[state2] (2) {$3$};
 \draw (3,4) node[state2] (3) {$4$};
 \draw (4,2) node[state2] (4) {$2$};
 \draw [line width=4] (3)--(4);
 \draw [thick,dashed,->] (2)--(3.5,3);
 \end{tikzpicture}};   
\draw (4,-10) node[state4] (a11)
 {\begin{tikzpicture}
 \tikzstyle{state2}=[circle,draw=black,scale=1.5]
 \draw (1,1) node[state2] (1) {$1$};
 \draw (2,2) node[state2] (2) {$2$};
 \draw (3,4) node[state2] (3) {$4$};
 \draw (4,3) node[state2] (4) {$3$};
 \draw [line width=4] (3)--(4);
 \end{tikzpicture}}; 
\draw (10,10) node[state4] (b1)
 {\begin{tikzpicture}
 \tikzstyle{state2}=[circle,draw=black,scale=1.5]
 \draw (1,3) node[state2] (1) {$3$};
 \draw (2,2) node[state2] (2) {$2$};
 \draw (3,1) node[state2] (3) {$1$};
 \draw (4,4) node[state2] (4) {$4$};
 \draw [line width=4] (1)--(2);
 \draw [line width=4] (2)--(3);
 \end{tikzpicture}}; 
\draw (10,8) node[state4] (b2)
 {\begin{tikzpicture}
 \tikzstyle{state2}=[circle,draw=black,scale=1.5]
 \draw (1,4) node[state2] (1) {$4$};
 \draw (2,2) node[state2] (2) {$2$};
 \draw (3,1) node[state2] (3) {$1$};
 \draw (4,3) node[state2] (4) {$3$};
 \draw [line width=4] (1)--(2);
 \draw [line width=4] (2)--(3);
 \draw [thick,dashed,->] (1.5,3)--(4);
 \end{tikzpicture}};
\draw (10,6) node[state1] (b3)
 {\begin{tikzpicture}
 \tikzstyle{state2}=[circle,draw=black,scale=1.5]
 \draw (1,3) node[state2] (1) {$3$};
 \draw (2,4) node[state2] (2) {$4$};
 \draw (3,2) node[state2] (3) {$2$};
 \draw (4,1) node[state2] (4) {$1$};
 \draw [line width=4] (3)--(4);
 \draw [line width=4] (2)--(3);
 \draw [thick,dashed,->] (1)--(2.5,3);
 \end{tikzpicture}};
\draw (10,4) node[state4] (b4)
 {\begin{tikzpicture}
 \tikzstyle{state2}=[circle,draw=black,scale=1.5]
 \draw (1,2) node[state2] (1) {$2$};
 \draw (2,1) node[state2] (2) {$1$};
 \draw (3,4) node[state2] (3) {$4$};
 \draw (4,3) node[state2] (4) {$3$};
 \draw [line width=4] (1)--(2);
 \draw [line width=4] (3)--(4);
 \end{tikzpicture}};  
\draw (10,2) node[state1] (b5)
 {\begin{tikzpicture}
 \tikzstyle{state2}=[circle,draw=black,scale=1.5]
 \draw (1,4) node[state2] (1) {$4$};
 \draw (2,2) node[state2] (2) {$2$};
 \draw (3,3) node[state2] (3) {$3$};
 \draw (4,1) node[state2] (4) {$1$};
 \draw [line width=4] (1)--(2);
 \draw [line width=4] (3)--(4);
 \draw [thick,dashed,->] (1.5,3)--(3);
 \end{tikzpicture}};
\draw (10,0) node[state1] (b6)
 {\begin{tikzpicture}
 \tikzstyle{state2}=[circle,draw=black,scale=1.5]
 \draw (1,3) node[state2] (1) {$3$};
 \draw (2,1) node[state2] (2) {$1$};
 \draw (3,4) node[state2] (3) {$4$};
 \draw (4,2) node[state2] (4) {$2$};
 \draw [line width=4] (1)--(2);
 \draw [line width=4] (3)--(4);
 \draw [thick,dashed,->] (1.5,2)--(4);
 \end{tikzpicture}};
\draw (10,-2) node[state1] (b7)
 {\begin{tikzpicture}
 \tikzstyle{state2}=[circle,draw=black,scale=1.5]
 \draw (1,2) node[state2] (1) {$2$};
 \draw (2,4) node[state2] (2) {$4$};
 \draw (3,3) node[state2] (3) {$3$};
 \draw (4,1) node[state2] (4) {$1$};
 \draw [line width=4] (3)--(4);
 \draw [line width=4] (2)--(3);
 \draw [thick,dashed,->] (1)--(3.5,2);
 \end{tikzpicture}};
\draw (10,-4) node[state4] (b8)
 {\begin{tikzpicture}
 \tikzstyle{state2}=[circle,draw=black,scale=1.5]
 \draw (1,4) node[state2] (1) {$4$};
 \draw (2,3) node[state2] (2) {$3$};
 \draw (3,1) node[state2] (3) {$1$};
 \draw (4,2) node[state2] (4) {$2$};
 \draw [line width=4] (1)--(2);
 \draw [line width=4] (2)--(3);
 \draw [thick,dashed,->] (2.5,2)--(4);
 \end{tikzpicture}}; 
\draw (10,-6) node[state4] (b9)
 {\begin{tikzpicture}
 \tikzstyle{state2}=[circle,draw=black,scale=1.5]
 \draw (1,4) node[state2] (1) {$4$};
 \draw (2,1) node[state2] (2) {$1$};
 \draw (3,3) node[state2] (3) {$3$};
 \draw (4,2) node[state2] (4) {$2$};
 \draw [line width=4] (1)--(2);
 \draw [line width=4] (3)--(4);
 \draw [thick,dashed,->] (1.3,3)--(3);
 \end{tikzpicture}};
\draw (10,-8) node[state1] (b10)
 {\begin{tikzpicture}
 \tikzstyle{state2}=[circle,draw=black,scale=1.5]
 \draw (1,3) node[state2] (1) {$3$};
 \draw (2,2) node[state2] (2) {$2$};
 \draw (3,4) node[state2] (3) {$4$};
 \draw (4,1) node[state2] (4) {$1$};
 \draw [line width=4] (1)--(2);
 \draw [line width=4] (3)--(4);
 \draw [thick,dashed,->] (2)--(3.6,2);
 \end{tikzpicture}};
\draw (10,-10) node[state4] (b11)
 {\begin{tikzpicture}
 \tikzstyle{state2}=[circle,draw=black,scale=1.5]
 \draw (1,1) node[state2] (1) {$1$};
 \draw (2,4) node[state2] (2) {$4$};
 \draw (3,3) node[state2] (3) {$3$};
 \draw (4,2) node[state2] (4) {$2$};
 \draw [line width=4] (3)--(4);
 \draw [line width=4] (2)--(3);
 \end{tikzpicture}};
\draw (14,0) node[state4] (b0)
 {\begin{tikzpicture}
 \tikzstyle{state2}=[circle,draw=black,scale=1.5]
 \draw (1,4) node[state2] (1) {$4$};
 \draw (2,3) node[state2] (2) {$3$};
 \draw (3,2) node[state2] (3) {$2$};
 \draw (4,1) node[state2] (4) {$1$};
 \draw [line width=4] (1)--(2);
 \draw [line width=4] (2)--(3);
 \draw [line width=4] (3)--(4);
 \end{tikzpicture}};
\draw [line width=2] (a0) to [out=90,in=200] (a1);
\draw [line width=2] (a0) to [out=85,in=195] (a2);
\draw (a0) to [out=80,in=190] (a3);
\draw [line width=2] (a0) to [out=75,in=185] (a4);
\draw (a0) to [out=70,in=180] (a5); 
\draw (a0)--(a6);
\draw (a0) to [out=-70,in=180] (a7);
\draw [line width=2] (a0) to [out=-75,in=175] (a8);
\draw [line width=2] (a0) to [out=-80,in=170] (a9);
\draw (a0) to [out=-85,in=165] (a10);
\draw [line width=2] (a0) to [out=-90,in=160] (a11); 
\draw [line width=2] (b0) to [out=90,in=-20] (b1);
\draw [line width=2] (b0) to [out=95,in=-15] (b2);
\draw (b0) to [out=100,in=-10] (b3);
\draw [line width=2] (b0) to [out=105,in=-5] (b4);
\draw (b0) to [out=110,in=0] (b5); 
\draw (b0)--(b6);
\draw (b0) to [out=-110,in=0] (b7);
\draw [line width=2] (b0) to [out=-105,in=5] (b8);
\draw [line width=2] (b0) to [out=-100,in=10] (b9);
\draw (b0) to [out=-95,in=15] (b10);
\draw [line width=2] (b0) to [out=-90,in=20] (b11);
\draw [line width=2] (5,10)--(9,10);
\draw [line width=2] (5,10)--(9,8);
\draw (5,10)--(9,6);
\draw [line width=2] (5,10)--(9,4);
\draw [line width=2] (5,8)--(9,10);
\draw [line width=2] (5,8)--(9,-4);
\draw (5,8)--(9,0);
\draw (5,6)--(9,10);
\draw (5,6)--(9,2);
\draw (5,6)--(9,-2);
\draw [line width=2] (5,4)--(9,8);
\draw [line width=2] (5,4)--(9,-4);
\draw [line width=2] (5,4)--(9,-6);
\draw (5,2)--(9,8);
\draw (5,2)--(9,-2);
\draw (5,0)--(9,-4);
\draw (5,0)--(9,6);
\draw (5,-2)--(9,-8);
\draw (5,-2)--(9,-2);
\draw (5,-2)--(9,6);
\draw [line width=2] (5,-4)--(9,10);
\draw [line width=2] (5,-4)--(9,-10);
\draw [line width=2] (5,-4)--(9,-6);
\draw (5,-4)--(9,-8);
\draw [line width=2] (5,-6)--(9,8);
\draw [line width=2] (5,-6)--(9,-10);
\draw (5,-6)--(9,2);
\draw (5,-8)--(9,-10);
\draw (5,-8)--(9,0);
\draw (5,-8)--(9,6);
\draw [line width=2] (5,-10)--(9,-10);
\draw [line width=2] (5,-10)--(9,4);
\draw (5,-10)--(9,-2);
\draw [line width=2] (5,-10)--(9,-4);      
\end{tikzpicture}
\caption{The shard intersection lattice for $S_4$ contains the lattice of noncrossing partitions.}\label{fig:intposet}
\end{figure}
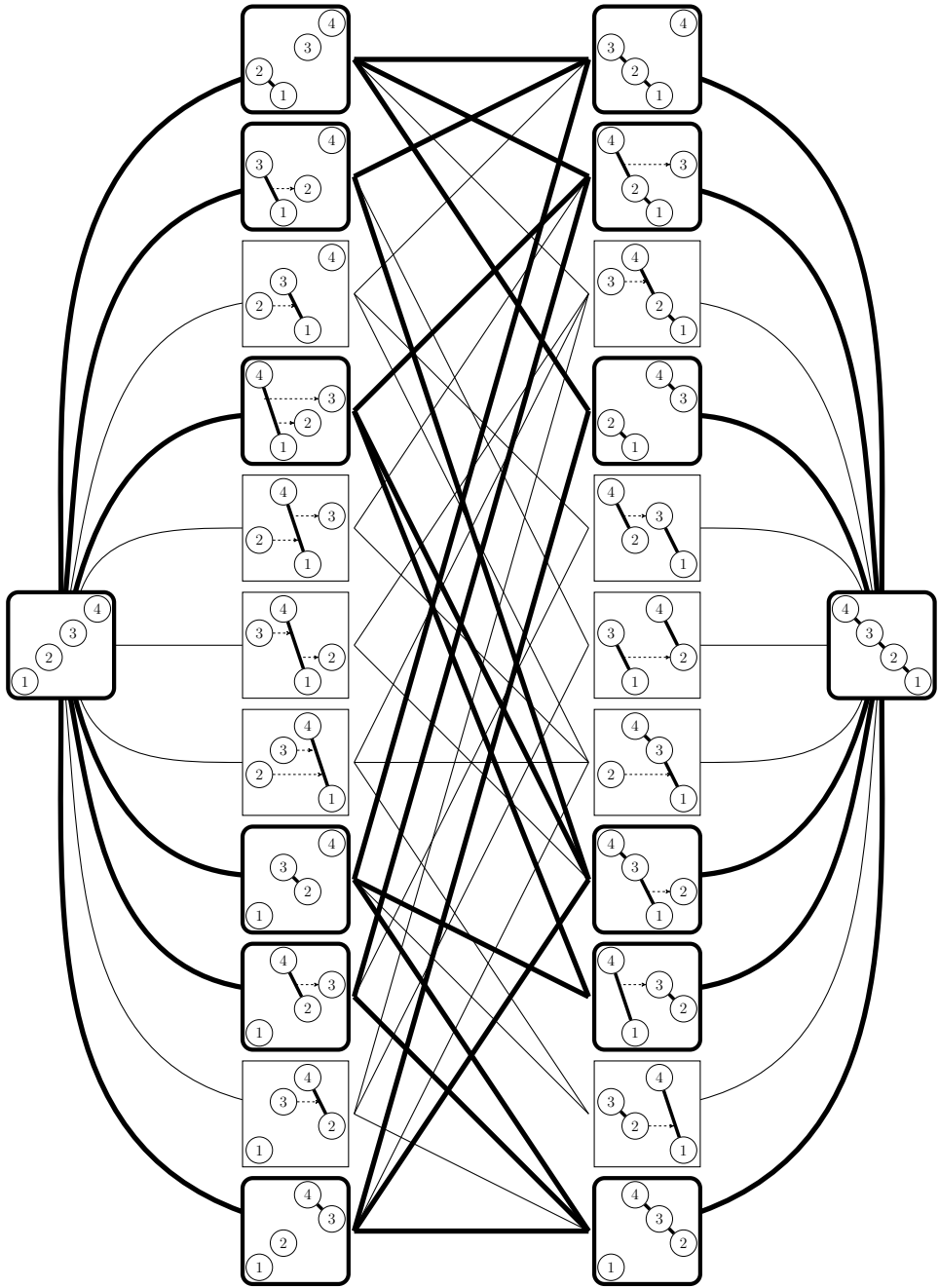

\subsection{Type $B_n$}

The root system of type $B_n$ lives in $V=\R^n$, with positive roots \[ \Phi^+ = \{ \varepsilon_j \pm \varepsilon_i : 1\leq i < j \leq n\} \cup \{ \varepsilon_i : 1\leq i \leq n\}.\]
(The root system of type $C_n$ is simply a rescaling of the $B_n$ root system. Thus, the hyperplane arrangement for $C_n$ is identical to that of $B_n$ and all results that follow in this section hold for  the Coxeter groups of type $C_n$ as well.)

With respect to this choice of root system, the base region $B$ is given by: \[ B = \{ (x_1,\ldots,x_n) \in \R^n : 0< x_1 < \cdots < x_n \}.\] The hyperplane corresponding to the positive root $\varepsilon_j - \varepsilon_i$ is: \[ H_{i,j} = \{ (x_1, \ldots, x_n) \in \R^n : x_i = x_j\},\]
the hyperplane corresponding to the positive root $\varepsilon_j + \varepsilon_i$ is: \[ H_{i,-j} = \{ (x_1,\ldots, x_n)\in \R^n : x_i = -x_j\},\]
and the hyperplane corresponding to the positive root $\varepsilon_i$ is: \[ H_{0,i} = \{(x_1,\ldots,x_n) \in \R^n : x_i = 0\}.\]

There are three possibilities for rank two subarrangements of $\mathcal{A}(B_n)$. The subarrangements are either isomorphic to $\mathcal{A}(A_1\times A_1)$, to $\mathcal{A}(A_2)$, or to $\mathcal{A}(B_2)$. The possibilities are shown in Figures \ref{fig:Asubs} and \ref{fig:Bsubs}.

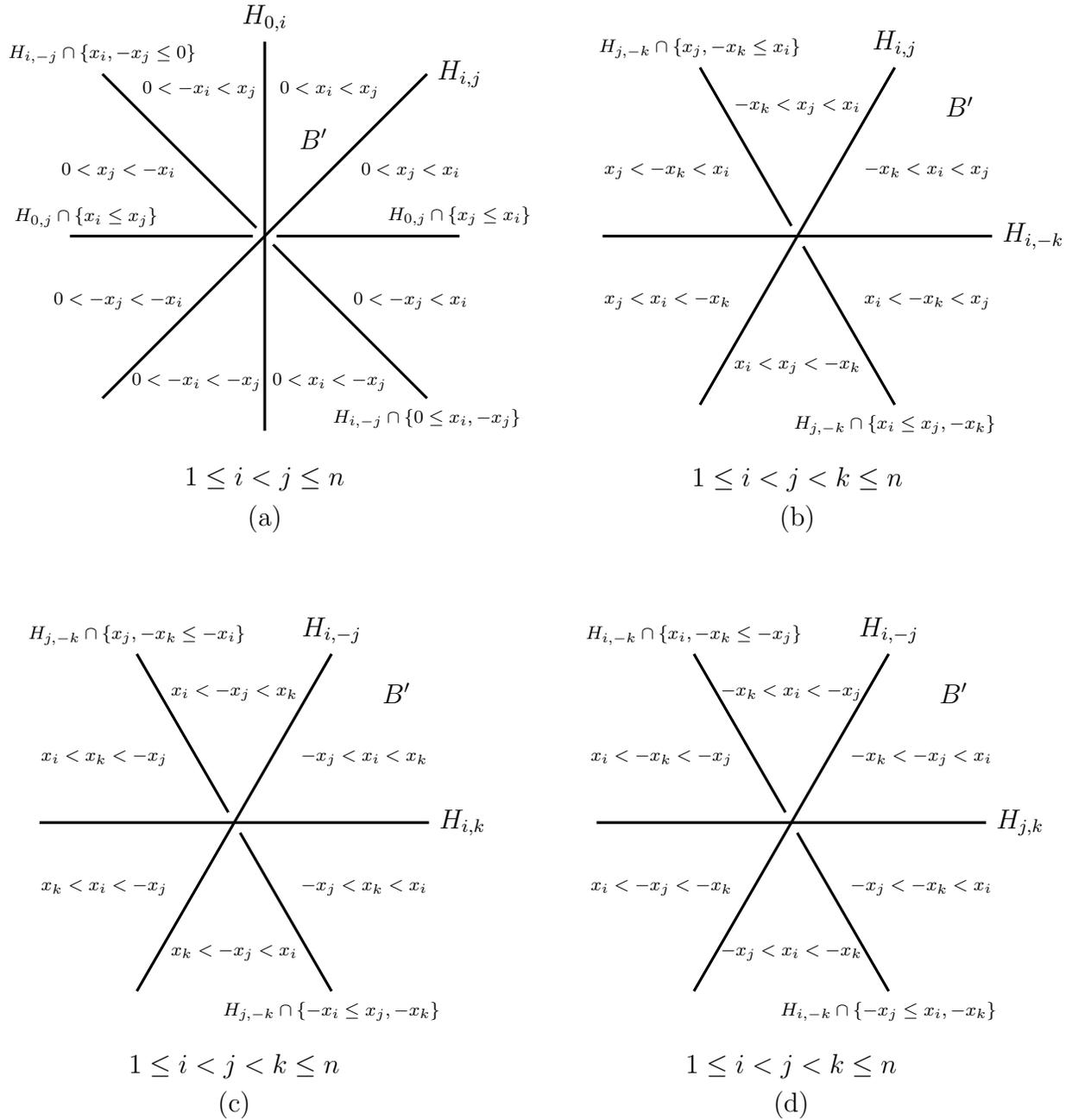
\begin{figure}
\begin{tikzpicture}
\draw[very thick] (-3,0) -- (3,0) node[above] {\tiny $H_{0,j} \cap \{ x_j \leq x_i \}$};
\draw (-2.75,0) node[above] {\tiny $H_{0,j} \cap \{ x_i \leq x_j \}$};
\draw[very thick] (-2.5,2.5) -- (2.5,-2.5) node[below] {\tiny $H_{i,-j} \cap \{ 0\leq x_i, -x_j \}$};
\draw[very thick] (-2.5,2.5) node[above] {\tiny $H_{i,-j} \cap \{ x_i, -x_j \leq 0\}$};
\draw[color=white,fill=white] (0,0) circle (5pt);
\draw[very thick] (0,-3) -- (0,3) node[above] {$H_{0,i}$};
\draw[very thick] (-2.5,-2.5) -- (2.5,2.5) node[right] {$H_{i,j}$};
\draw (.75,1.5) node {$B'$};
\draw (1,2.25) node {\tiny $0 < x_i < x_j$};
\draw (-1,2.25) node {\tiny $0 < -x_i < x_j$};
\draw (2.25,1) node {\tiny $0 < x_j < x_i$};
\draw (2.25,-1) node {\tiny $0 < -x_j < x_i$};
\draw (1,-2.25) node {\tiny $0 < x_i < -x_j$};
\draw (-1.05,-2.25) node {\tiny $0 < -x_i < -x_j$};
\draw (-2.25,-1) node {\tiny $0 < -x_j < -x_i$};
\draw (-2.25,1) node {\tiny $0 < x_j < -x_i$};
\draw (0,-3.75) node {$1\leq i < j \leq n$};
\draw (0,-4) node[below] {(a)};
\end{tikzpicture}
\hspace{.5cm}
\begin{tikzpicture}
\draw[very thick] (-1.5,2.6) -- (1.5,-2.6) node[below] {\tiny $H_{j,-k}\cap \{ x_i \leq x_j, - x_k\}$};
\draw (-1.5,2.6) node[above] {\tiny $H_{j,-k}\cap \{ x_j, -x_k \leq x_i\}$};
\draw[fill=white, color=white] (0,0) circle (5pt);
\draw[very thick] (-3,0) -- (3,0) node[right] {$H_{i,-k}$};
\draw[very thick] (-1.5,-2.6) -- (1.5,2.6) node[above] {$H_{i,j}$};
\draw (2.5,2) node {$B'$};
\draw (2,1) node {\tiny $-x_k < x_i < x_j$};
\draw (2,-1) node {\tiny $x_i < -x_k < x_j$};
\draw (0,2) node {\tiny $-x_k < x_j < x_i$};
\draw (0,-2) node {\tiny $x_i< x_j < -x_k$};
\draw (-2,1) node {\tiny $x_j < -x_k < x_i$};
\draw (-2,-1) node {\tiny $x_j < x_i < -x_k$};
\draw (0,-3.75) node {$1\leq i < j < k \leq n$};
\draw (0,-4) node[below] {(b)};
\end{tikzpicture}

\vspace{1cm}

\begin{tikzpicture}
\draw[very thick] (-1.5,2.6) -- (1.5,-2.6) node[below] {\tiny $H_{j,-k}\cap \{ -x_i \leq x_j, - x_k\}$};
\draw (-1.5,2.6) node[above] {\tiny $H_{j,-k}\cap \{ x_j, -x_k \leq -x_i\}$};
\draw[fill=white, color=white] (0,0) circle (5pt);
\draw[very thick] (-3,0) -- (3,0) node[right] {$H_{i,k}$};
\draw[very thick] (-1.5,-2.6) -- (1.5,2.6) node[above] {$H_{i,-j}$};
\draw (2.5,2) node {$B'$};
\draw (2,1) node {\tiny $-x_j < x_i < x_k$};
\draw (2,-1) node {\tiny $-x_j < x_k < x_i$};
\draw (0,2) node {\tiny $x_i < -x_j < x_k$};
\draw (0,-2) node {\tiny $x_k< -x_j < x_i$};
\draw (-2,1) node {\tiny $x_i < x_k < -x_j$};
\draw (-2,-1) node {\tiny $x_k < x_i < -x_j$};
\draw (0,-3.75) node {$1\leq i < j < k \leq n$};
\draw (0,-4) node[below] {(c)};
\end{tikzpicture}
\hspace{1cm}
\begin{tikzpicture}
\draw[very thick] (-1.5,2.6) -- (1.5,-2.6) node[below] {\tiny $H_{i,-k}\cap \{ -x_j \leq x_i, - x_k\}$};
\draw (-1.5,2.6) node[above] {\tiny $H_{i,-k}\cap \{ x_i, -x_k \leq -x_j\}$};
\draw[fill=white, color=white] (0,0) circle (5pt);
\draw[very thick] (-3,0) -- (3,0) node[right] {$H_{j,k}$};
\draw[very thick] (-1.5,-2.6) -- (1.5,2.6) node[above] {$H_{i,-j}$};
\draw (2.5,2) node {$B'$};
\draw (2,1) node {\tiny $-x_k < -x_j < x_i$};
\draw (2,-1) node {\tiny $-x_j < -x_k < x_i$};
\draw (0,2) node {\tiny $-x_k < x_i < -x_j$};
\draw (0,-2) node {\tiny $-x_j< x_i < -x_k$};
\draw (-2,1) node {\tiny $x_i < -x_k < -x_j$};
\draw (-2,-1) node {\tiny $x_i < -x_j < -x_k$};
\draw (0,-3.75) node {$1\leq i < j < k \leq n$};
\draw (0,-4) node[below] {(d)};
\end{tikzpicture}
\caption{The rank two subarrangements of $\mathcal{A}(B_n)$ not pictured in Figure \ref{fig:Asubs}.}\label{fig:Bsubs}
\end{figure}

As we will show, the cutting relations for hyperplanes $H_{i,j}$, with $0\leq i < j \leq n$, are rather different from the cutting relations for the hyperplanes $H_{i,-j}$, with $1\leq i < j \leq n$.

It is easy to see that a hyperplane $H_{i,j}$, with $0\leq i < j \leq n$ is either cut according to the arrangement $\mathcal{A}(A_2)$ in Figure \ref{fig:Asubs} or, if $i=0$, according to Figure \ref{fig:Bsubs} (a). In either case, we find the shards of $H_{i,j}$ are formed by choosing, for each $k$ such that $i < k < j$, whether $x_k \leq x_i = x_j$ or $x_i = x_j \leq x_k$. In particular, there are $2^{j-i-1}$ shards of this hyperplane, just as we found in Proposition \ref{prp:Ashards} for type $A_{n-1}$.

We now turn to hyperplanes of the form $H_{i,-j}$. The cutting relations for these hyperplanes appear in the arrangements of Figure \ref{fig:Bsubs} (a), (b), (c), and (d).

In case (a), we have two choices. Either $0\leq x_i = -x_j$, or $x_i=-x_j \leq 0$.

Now consider cases (b) and (c). Suppose, without loss of generality, that $0 \leq x_i = -x_j$. Here we need to choose, for each $k$ such that $1\leq k < i$, whether:
\begin{itemize}
\item $-x_k \leq -x_i = x_j \leq 0 \leq x_i = -x_j \leq x_k$,
\item $-x_i = x_j \leq -x_k, 0, x_k \leq x_i =-x_j$, or
\item $x_k \leq -x_i = x_j \leq 0 \leq x_i =-x_j \leq -x_k$.
\end{itemize}
Note that we could \emph{not} have $0 \leq x_i = -x_j \leq -x_k, x_k$, as all coordinates would be forced to equal zero. Hence there are three choices for each such $k$, yielding a total of $3^{i-1}$ choices of this kind.

Finally consider case (d). Here we see we need to choose, for each $k$ such that $i < k < j$, whether $x_k \leq x_i = -x_j$ or $x_i = -x_j \leq x_k$, yielding $2^{j-i-1}$ choices.

We have now completely described the shards of type $B_n$. In particular, we have the following Proposition. (The formula for the sum is easily verified by induction.)

\begin{prp}\label{prp:Bshards}
For all $0\leq i < j \leq n$, the hyperplane $H_{i,j}$ has $2^{j-i-1}$ shards. For any $1\leq i < j \leq n$, the hyperplane $H_{i,-j}$ has $2^{j-i}3^{i-1}$ shards. Therefore, there are \[ \sum_{0\leq i < j\leq n} 2^{j-i-1} + \sum_{1\leq i < j \leq n} 2^{j-i}3^{i-1} = 3^n - n-1\] shards of $\mathcal{A}(B_n)$ in all.
\end{prp}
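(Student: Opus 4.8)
The proposition has two ingredients: the shard count for each individual hyperplane, and the evaluation of the resulting sum. The per-hyperplane counts are essentially contained in the case analysis preceding the statement; the plan is to organize that analysis, verify that the sign patterns it produces are exactly the ones realized by genuine shards, and then sum.

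\emph{Step 1: the hyperplanes $H_{i,j}$, $0\le i<j\le n$.} I would first check that the only rank two subarrangements in which $H_{i,j}$ is non-basic are the $\mathcal{A}(A_2)$-type arrangements $\{H_{i,j},H_{i,k},H_{k,j}\}$ of Figure \ref{fig:Asubs} with $i<k<j$, together with---when $i=0$---the $\mathcal{A}(B_2)$-type arrangements of Figure \ref{fig:Bsubs}(a); one verifies directly that $H_{i,j}$ is basic in every $\mathcal{A}(B_2)$ containing it when $i\ge 1$, and that $\mathcal{A}(A_1\times A_1)$ produces no cut. Each of these arrangements imposes the single binary cut ``$x_k\le x_i=x_j$ or $x_i=x_j\le x_k$''. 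Since distinct indices $k$ involve distinct coordinates, all $2^{j-i-1}$ sign choices are realized by a nonempty full-dimensional cone in $H_{i,j}$ (take $x_i=x_j=0$, place each $x_k$ with $i<k<j$ on the prescribed side, and let the remaining coordinates range freely), giving exactly $2^{j-i-1}$ shards, just as in Proposition \ref{prp:Ashards}.

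\emph{Step 2: the hyperplanes $H_{i,-j}$, $1\le i<j\le n$.} Here I would collect the cuts from Figure \ref{fig:Bsubs}(a)--(d): the unique $\mathcal{A}(B_2)$ on the coordinates $\{i,j\}$ gives one binary cut, namely the sign of $x_i$; for each $m$ with $1\le m<i$ there are exactly two $\mathcal{A}(A_2)$ subarrangements cutting $H_{i,-j}$, one of type (b) and one of type (c), with cuts ``$x_m$ versus $x_i$'' and ``$x_m$ versus $-x_i$'', which---once the sign of $x_i$ is pinned down---carve out three, not four, nonempty regions (the fourth combination forces $x_i=0$ and hence collapses $H_{i,-j}$ to the origin); and for each $k$ with $i<k<j$ there is exactly one $\mathcal{A}(A_2)$ subarrangement of type (d) giving one further binary cut. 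The substantive part is showing this list is complete and non-redundant: I would run through the hyperplanes sharing exactly one index with $H_{i,-j}$ and check that each such pair spans either an $\mathcal{A}(A_1\times A_1)$, one of the listed arrangements, or an $\mathcal{A}(A_2)$ in which $H_{i,-j}$ turns out to be basic (which is exactly what happens for the mates that introduce an index $l>j$). Because the surviving cuts involve the pairwise distinct coordinates $x_i$, the $x_m$ with $m<i$, and the $x_k$ with $i<k<j$, every non-degenerate sign pattern is realized, so $H_{i,-j}$ has exactly $2\cdot 3^{i-1}\cdot 2^{j-i-1}=2^{j-i}3^{i-1}$ shards.

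\emph{Step 3: the total.} Grouping the first sum by $d=j-i$ and the second by $i$, I would compute
\[ \sum_{0\le i<j\le n}2^{j-i-1}=\sum_{d=1}^{n}(n+1-d)2^{d-1}=2^{n+1}-n-2 \]
and
\[ \sum_{1\le i<j\le n}2^{j-i}3^{i-1}=\sum_{i=1}^{n-1}3^{i-1}(2^{n-i+1}-2)=3^{n}-2^{n+1}+1, \]
each a routine geometric-series evaluation; adding gives $3^n-n-1$. Equivalently one can induct on $n$: deleting the hyperplanes through coordinate $n$ leaves the $B_{n-1}$ arrangement, and those through coordinate $n$ contribute $(2^n-1)+(2\cdot 3^{n-1}-2^n)=2\cdot 3^{n-1}-1$ shards, so, writing $f(n)$ for the number of shards of $\mathcal{A}(B_n)$, one has $f(n)=f(n-1)+2\cdot 3^{n-1}-1$ with $f(1)=1$.

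The one genuinely delicate point is the bookkeeping in Step 2: making sure that Figures \ref{fig:Bsubs}(a)--(d) account for \emph{all} the cuts of $H_{i,-j}$ and no spurious ones, and that the type-(b) and type-(c) cuts attached to an index $m<i$ interact to leave precisely three chambers rather than four. Step 1, the realizability arguments, and the summation in Step 3 are all routine.
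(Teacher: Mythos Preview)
Your proposal is correct and follows essentially the same approach as the paper: the per-hyperplane counts come from the identical case analysis of rank-two subarrangements (Figures~\ref{fig:Asubs} and~\ref{fig:Bsubs}(a)--(d)), and the paper simply says the sum ``is easily verified by induction,'' which is exactly your second method in Step~3. You are more careful than the paper in two places---explicitly checking completeness of the list of cuts on $H_{i,-j}$ and explicitly verifying realizability of each sign pattern---but these are elaborations of the same argument, not a different route.
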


We now encode intersections of shards with signed permutations.

The Coxeter group of type $B_n$ is known as the hyperoctahedral group, whose elements are bijections \[w: \{-n,\ldots, -1,0,1,\ldots,n\} \to \{-n,\ldots,-1,0,1,\ldots,n\}\] such that $w(-i) = -w(i)$ for all $i$. In particular $w(0)=0$ for any element $w$.  We write elements in one-line notation: $w=w(-n)\cdots w(-1)0w(1)\cdots w(n)$.  For example $w = \bar3 5 4 \bar2 1 0\bar1 2\bar 4\bar 5 3$ is a signed permutation (with bars instead of minus signs to save space). Note that because of the symmetry condition, a given $w$ is determined by $w(1), \ldots, w(n)$, e.g., it would suffice to say $w = \bar1 2\bar 4\bar 5 3$ above.  

A \emph{descent} of an element $w \in B_n$ is a letter $w(i)$, $i\geq 0$, such that $w(i) > w(i+1)$. We denote the number of type $B_n$ descents by $d_B(w)$. Thus $w = \bar1 2\bar 4\bar 5 3$ has $d_B(w) = 3$. Notice that, as a word, $w(-n)\cdots w(-1) 0 w(1) \cdots w(n)$ has exactly twice as many descents as $w$, since if $w(i) > w(i+1)$, then $w(-i-1)=-w(i+1) > -w(i) = w(-i)$. As with the symmetric group, we will highlight the maximal decreasing runs of $w$, written in long form, by inserting bars in ascent positions. For example, we write \[ w = \bar3 | 5 4 \bar2 | 1 0\bar1 | 2\bar 4\bar 5 | 3.\]

Visually, we represent a signed permutation as an array with a mark in column $i$, row $j$ ($-n\leq i,j \leq n$) if $w(i)=j$. As with the type $A_{n-1}$ model, we group together decreasing runs into blocks indicated by thick lines: 
\begin{equation}\label{eq:Bex}
\begin{tikzpicture}[>=stealth',bend angle=45,auto,xscale=.8, yscale=.7]
\tikzstyle{state}=[circle,draw=black,minimum size=4mm]
\draw (-4,5) node[state] (5) {$5$};
\draw (-2,-2) node[state] (2b) {$\bar 2$};
\draw (3,-4) node[state] (4b) {$\bar 4$};
\draw (-5,-3) node[state] (3b) {$\bar 3$};
\draw (-1,1) node[state] (1) {$1$};
\draw (0,0) node[state] (0) {$0$};
\draw (1,-1) node[state] (1b) {$\bar 1$};
\draw (5,3) node[state] (3) {$3$};
\draw (-3,4) node[state] (4) {$4$};
\draw (2,2) node[state] (2) {$2$};
\draw (4,-5) node[state] (5b) {$\bar 5$};
\draw[line width=4] (5)--(4)--(2b);
\draw[line width=4] (1)--(0)--(1b);
\draw[line width=4] (2)--(4b)--(5b);
\draw[dashed, ->] (3b)--(2.75,-3);
\draw[dashed, ->] (-2.25,0)--(0);
\draw[dashed, ->] (0)--(2.25,0);
\draw[dashed, ->] (-2.75,3)--(3);
\end{tikzpicture}
\end{equation}
If it is possible to draw a horizontal line to connect two decreasing runs, the block on the left is considered less than the block on the right. This gives a certain pre-order on $\{0,\pm 1, \ldots, \pm n\}$ that we will call a \emph{signed permutation pre-order}.

We will show that signed permutation pre-orders are in bijection with type $B_n$ shard intersections. Just as with the type $A_{n-1}$ model, we define a cone of points, $C(w)$, for an element $w \in B_n$ as follows:
\begin{itemize}
\item if $i$ and $j$ are in the same block in $w$, then we have $x_i = x_j$, with the understanding that $x_{-i} = -x_i$ and $x_0 = 0$,
\item if $i< k < j$ and $k$ is not in the same block as $i$ and $j$, then:
\begin{itemize}
\item[a)] $x_k \leq x_i = x_j$ if $k$ appears to the left of $i$ in $w$, and 
\item[b)] $x_i = x_j \leq x_k$ if $k$ appears to the right of $i$ in $w$.
\end{itemize}
\end{itemize}

The example shown in \eqref{eq:Bex} then corresponds to the set of points in $\R^5$ satisfying:
\[ x_1 = 0 = -x_1 \leq x_2 = -x_4 = -x_5 \geq -x_3.\]

Each block has a negative counterpart, except for the block containing zero. Thus the dimension of $C(w)$ is half the number of blocks not containing zero, plus one if there is a nonzero number in the block with zero. Thus codimension corresponds to the type $B_n$ descent statistic.

\begin{obs}\label{obs:desB}
For any $w \in B_n$, \[ d_B(w) = n-\dim(C(w)).\] In particular, shards correspond to signed permutations with exactly one type $B_n$ descent.
\end{obs}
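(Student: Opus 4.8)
The plan is to prove the identity by a dimension count that parallels the reasoning behind Observation~\ref{obs:des} for type $A_{n-1}$, the one new ingredient being careful bookkeeping of the sign symmetry $w(-i)=-w(i)$. Write $w$ in long form $w(-n)\cdots w(-1)\,0\,w(1)\cdots w(n)$. The first step is to determine the block structure of the signed permutation pre-order. Because $w(i)<w(i+1)$ precisely when $w(-i-1)<w(-i)$, the set of ascent positions is invariant under $i\mapsto -i-1$, a map with no fixed point; hence the decreasing runs come in mirror pairs about the center, together with a single run containing position $0$, and one checks this central run occupies a symmetric interval $[-q,q]$ of positions. Passing to the pre-order, which is a genuine set partition of $\{0,\pm1,\dots,\pm n\}$, the blocks are therefore: one block $B_0$ that contains $0$ and is closed under negation, plus $m$ antipodal pairs $\{B_1,-B_1\},\dots,\{B_m,-B_m\}$, for a total of $2m+1$ blocks. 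Since the long-form word has $2d_B(w)$ descents among its $2n$ adjacent-pair positions (as noted just before the statement), it has $2n-2d_B(w)$ ascents and so $2n-2d_B(w)+1$ decreasing runs; equating with $2m+1$ gives $m=n-d_B(w)$.

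The second step is to show $\dim C(w)=m$. From the defining relations of $C(w)$, every coordinate $x_i$ with $i\in B_0$ is forced to $0$ (using $x_0=0$, the block equality $x_i=x_0$, and $x_{-i}=-x_i$); the coordinates indexed by a fixed $B_j$ share one value $t_j$; and those indexed by $-B_j$ then equal $-t_j$ automatically. So $C(w)$ lies in the $m$-dimensional coordinate subspace $L_w$ cut out by these equalities, with free coordinates $t_1,\dots,t_m$. It remains to check that the inequalities defining $C(w)$ cut out a cone of full dimension $m$ inside $L_w$ --- the exact type $B_n$ analogue of the observation in the type $A_{n-1}$ discussion that $\dim C(w)$ is one less than the number of decreasing runs. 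This holds because those inequalities are precisely the comparabilities induced by Bancroft's pre-order, which is consistent, so a generic choice of $(t_1,\dots,t_m)$ makes every strict inequality strict and hence lies in the relative interior of $C(w)$.

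Putting the two steps together, $\dim C(w)=m=n-d_B(w)$, that is $d_B(w)=n-\dim C(w)$; and since shards are the codimension-one pieces of the arrangement, they correspond to $\dim C(w)=n-1$ and hence to $d_B(w)=1$. I expect the only genuine subtlety to be the full-dimensionality claim in the second step, i.e.\ ruling out that one of the defining inequalities degenerates to an equality and collapses some parameter $t_j$ --- this is exactly the phenomenon the paper flags when it notes that the sign choice $0\le x_i=-x_j\le -x_k,x_k$ would force all coordinates to vanish. For an honest signed permutation $w$ the pre-order is automatically consistent, so no such collapse occurs; making this precise is the one point where a short argument is required rather than a verbatim transfer of the type $A_{n-1}$ case.
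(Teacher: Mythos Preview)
Your proposal is correct and follows essentially the same approach as the paper: count the decreasing runs of the long form via the symmetry $i\mapsto -i-1$ on ascent positions to get $2m+1$ blocks with $m=n-d_B(w)$, then observe that the zero block contributes nothing to $\dim C(w)$ while each antipodal pair contributes one free parameter. Your write-up is in fact more careful than the paper's one-sentence justification (which contains a small slip in the ``plus one'' clause), and you rightly flag the full-dimensionality check as the only place requiring a moment's thought.
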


Thus Proposition \ref{prp:Bshards} gives an indirect way to prove there are $3^n - n- 1$ signed permutations with exactly one descent.

It is straightforward to prove that such cones always correspond to intersections of type $B_n$ shards, and we get the following analogue of Proposition \ref{prp:Awelldef}.

\begin{prp}\label{prp:Bwelldef}(Signed permutation pre-orders correspond to shard intersections)
Every intersection of $B_n$ shards equals $C(w)$ for some $w\in B_n$, and for any $w \in B_n$, the cone corresponding to $w$ is an intersection of shards.
\end{prp}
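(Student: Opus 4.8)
The plan is to follow the proof of Proposition~\ref{prp:Awelldef} essentially verbatim, the one new ingredient being the sign symmetry $x_{-i}=-x_i$, $x_0=0$ that every type $B_n$ cone $C(w)$ respects. As there, the argument has two halves: (I) every intersection of shards equals $C(w)$ for some $w\in B_n$, proved by induction on the number of shards; and (II) every $C(w)$ is an intersection of shards.

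For (I), the base cases are the empty intersection, which is $V=C(w)$ for $w$ the identity, and a single shard $\Sigma$. For the latter I would appeal to the explicit enumeration of shards carried out just before Proposition~\ref{prp:Bshards}: a shard inside $H_{i,j}$ (with $0\le i<j\le n$) is cut out by $x_i=x_j$ together with, for each $i<k<j$, a binary choice $x_k\le x_i=x_j$ or $x_i=x_j\le x_k$; a shard inside $H_{i,-j}$ is cut out by $x_i=-x_j$, a sign choice against $0$, a ternary choice for each $1\le k<i$, and a binary choice for each $i<k<j$. In each case one reads off directly the (unique) signed permutation $w$ with a single decreasing run whose defining relations, interpreted through $x_{-m}=-x_m$, are precisely these; by Observation~\ref{obs:desB} such $w$ satisfy $d_B(w)=1$, and since by Proposition~\ref{prp:Bshards} the number $3^n-n-1$ of shards equals the number of signed permutations with one descent, this assignment is a bijection. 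For the inductive step, suppose $\bigcap_{t=1}^{r-1}\Sigma_t=C(u)$ and let $\Sigma_r=C(v)$ be a further shard, imposing a relation $x_a=x_b$ (and hence automatically $x_{-a}=x_{-b}$). Then $\Sigma_r\cap C(u)=C(w)$, where $w$ is obtained from $u$ by merging the blocks of $u$ that lie between $a$ and $b$ together with the blocks that lie between $-a$ and $-b$, and absorbing into the merged block any intermediate letter that sits on one side of $a$ in $u$ but on the other side in $v$ --- exactly the recipe of Proposition~\ref{prp:Awelldef}, applied symmetrically. Because the pre-order of $u$ is symmetric and the two merges are performed in tandem, the pre-order of $w$ is again symmetric; the one point needing care is that the merged block may swallow $0$, in which case (by symmetry) the whole block collapses into the zero block, forcing its coordinates to vanish --- this is the type $B_n$ incarnation of the impossibility ``$0\le x_i=-x_j\le -x_k,x_k$'' flagged in the text --- and one checks the result is still a legitimate signed permutation pre-order.

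For (II), a signed permutation with one $B_n$ descent is a shard by the base-case discussion above. For $w$ with $d_B(w)>1$ I would produce a defining family of shards exactly as in type $A_{n-1}$: for each pair of letters $w(i)>w(j)$ with $i<j$ lying in a common decreasing run of the long-form word of $w$, let $\Sigma_{i,j}$ be the cone $x_{w(i)}=x_{w(j)}$ subject to $x_c\le x_{w(i)}$ if $w^{-1}(c)<i$ and $x_{w(i)}\le x_c$ otherwise, taken over all values $c$ strictly between $w(j)$ and $w(i)$. Read through the sign symmetry, $\Sigma_{i,j}$ lies on one of the hyperplanes of $\mathcal{A}(B_n)$ and its inequalities match one of the shard patterns catalogued before Proposition~\ref{prp:Bshards}, so $\Sigma_{i,j}$ is a genuine $B_n$ shard; it contains $C(w)$; and ranging over all such pairs reconstitutes precisely the defining relations of $C(w)$, whence $\bigcap_{i,j}\Sigma_{i,j}=C(w)$. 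As in type $A_{n-1}$, the family so obtained need be neither minimal nor maximal.

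The step I expect to be the main obstacle is the consistency check inside the inductive step of (I): when two blocks are merged across the sign symmetry one must verify that the new block, its negative, and all the ``horizontal line'' relations it inherits remain coherent, and in particular one must correctly detect the degenerate case in which the merged block meets its own negative and must therefore be collapsed into the zero block. Managing this bookkeeping --- essentially, tracking when and how the zero block grows --- is the analogue of, but somewhat more delicate than, the merging argument in Proposition~\ref{prp:Awelldef}; everything else is a routine transcription of the type $A_{n-1}$ proof, with Observation~\ref{obs:desB} guaranteeing at each stage that the rank of the pre-order produced matches the codimension of the corresponding shard intersection.
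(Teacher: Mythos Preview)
Your proposal is correct and takes essentially the same approach as the paper, which in fact omits the proof entirely, saying only that ``it is straightforward to prove that such cones always correspond to intersections of type $B_n$ shards'' and presenting the result as an analogue of Proposition~\ref{prp:Awelldef}. Your outline---adapting the type~$A_{n-1}$ argument with attention to the sign symmetry and the zero block---is exactly the intended route, and your identification of the symmetric block-merging bookkeeping as the one nontrivial point is accurate.
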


%
%
%
%
%

We can define the shard intersection order on $B_n$ just as given in Definition \ref{def:order}. Namely, $u \leq v$ in $(B_n,\leq)$ if and only if $C(v)\subseteq C(u)$. This manifests itself for signed permutation pre-orders in the same notions of ``refinement" and ``consistency" given in Proposition \ref{prp:order}. We can use the same intuition of merging blocks to move up in the poset, taking care to act symmetrically: if $i$ joins a block with $j$, then $-i$ must join a block with $-j$ and so on.

For example, $\bar 4 \bar 5| \bar 3 |\bar 2 | 1 0 \bar 1| 2 | 3| 54 < \bar3 | 5 4 \bar2 | 1 0\bar1 | 2\bar 4\bar 5 | 3 $ as shown:
\[
\begin{tikzpicture}
\tikzstyle{state1}=[rectangle,draw=black, scale=.7]
\draw (0,0) node[state1]
 {\begin{tikzpicture}[>=stealth',bend angle=45,auto,xscale=.8, yscale=.7]
\tikzstyle{state}=[circle,draw=black,minimum size=4mm]
\draw (4,5) node[state] (5) {$5$};
\draw (-2,-2) node[state] (2b) {$\bar 2$};
\draw (-5,-4) node[state] (4b) {$\bar 4$};
\draw (-3,-3) node[state] (3b) {$\bar 3$};
\draw (-1,1) node[state] (1) {$1$};
\draw (0,0) node[state] (0) {$0$};
\draw (1,-1) node[state] (1b) {$\bar 1$};
\draw (3,3) node[state] (3) {$3$};
\draw (5,4) node[state] (4) {$4$};
\draw (2,2) node[state] (2) {$2$};
\draw (-4,-5) node[state] (5b) {$\bar 5$};
\draw[line width=4] (5)--(4);
\draw[line width=4] (4b)--(5b);
\draw[line width=4] (1)--(0)--(1b);
\end{tikzpicture}};
\draw (3.5,0) node {$<$};
\draw (7,0) node[state1]
 {\begin{tikzpicture}[>=stealth',bend angle=45,auto,xscale=.8, yscale=.7]
\tikzstyle{state}=[circle,draw=black,minimum size=4mm]
\draw (-4,5) node[state] (5) {$5$};
\draw (-2,-2) node[state] (2b) {$\bar 2$};
\draw (3,-4) node[state] (4b) {$\bar 4$};
\draw (-5,-3) node[state] (3b) {$\bar 3$};
\draw (-1,1) node[state] (1) {$1$};
\draw (0,0) node[state] (0) {$0$};
\draw (1,-1) node[state] (1b) {$\bar 1$};
\draw (5,3) node[state] (3) {$3$};
\draw (-3,4) node[state] (4) {$4$};
\draw (2,2) node[state] (2) {$2$};
\draw (4,-5) node[state] (5b) {$\bar 5$};
\draw[line width=4] (5)--(4)--(2b);
\draw[line width=4] (1)--(0)--(1b);
\draw[line width=4] (2)--(4b)--(5b);
\draw[dashed, ->] (3b)--(2.75,-3);
\draw[dashed, ->] (-2.25,0)--(0);
\draw[dashed, ->] (0)--(2.25,0);
\draw[dashed, ->] (-2.75,3)--(3);
\end{tikzpicture}}; 
\end{tikzpicture}
\]
In moving from the signed permutation on the left to the one on the right, we merged $\bar 2$ with the block $54$ (and hence $2$ with $\bar 4\bar 5$). This meant that we needed to decide whether the new block would be right or left of $3$ and right or left of the block containing $0$. In this case, we chose $54\bar 2$ to be left of both.

The lattice of $B_n$ shard intersections is ranked by codimension, so by Observation \ref{obs:desB} we have the following.

\begin{obs}
The rank of a signed permutation $w$ in the shard intersection order $(B_n,\leq)$ is given by descent number: $\rk(w) = d_B(w)$.
\end{obs}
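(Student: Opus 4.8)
The plan is to repeat, in type $B_n$, the one-line argument that gave Observation~\ref{obs:rankA}. Via the bijection $w\leftrightarrow C(w)$ of Proposition~\ref{prp:Bwelldef}, the poset $(B_n,\le)$ is exactly the lattice of intersections of type $B_n$ shards ordered by reverse inclusion, and Reading~\cite{R} shows this lattice is graded. So the whole task reduces to identifying its rank function with codimension in $V=\R^n$; once that is known, Observation~\ref{obs:desB} gives that the codimension of $C(w)$ is $n-\dim C(w)=d_B(w)$, whence $\rk(w)=d_B(w)$.

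To see the rank is codimension, I would describe the covering relations in terms of signed permutation pre-orders, using the type $B_n$ version of the refinement/consistency criterion of Proposition~\ref{prp:order}. If $C(v)\subsetneq C(u)$ then the pre-order of $v$ is obtained from that of $u$ by a sequence of \emph{symmetric} block-merges (merging a block $B_i$ into $B_j$ forces merging $-B_i$ into $-B_j$); passing to a cover, one performs a single such minimal merge. The dimension count behind Observation~\ref{obs:desB} --- half the number of blocks not containing zero, plus one if a nonzero coordinate lies in the zero block --- then shows each minimal symmetric merge lowers $\dim C$ by exactly one: a pair of mutually-negative merges together eliminates one dimension, and a merge that absorbs (or is absorbed by) the zero block eliminates one dimension. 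Hence covers raise codimension by exactly one, as required.

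The only subtle point --- the main obstacle, modest as it is --- is ruling out ``long'' covering relations: one must check that no admissible shard intersection $C(v)$ whose codimension exceeds that of $C(u)$ by two or more can cover $C(u)$; equivalently, that every drop of codimension one in the pre-order description is realized by a single admissible merge rather than being forced to bundle several merges together. This is handled by the explicit combinatorial model of Section~\ref{sec:poset} together with Reading's gradedness theorem, so the proof is a symmetric transcription of the type $A_{n-1}$ reasoning and needs no genuinely new idea.
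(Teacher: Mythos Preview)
Your proposal is correct and follows the same approach as the paper: the paper's entire argument is the one sentence ``The lattice of $B_n$ shard intersections is ranked by codimension, so by Observation~\ref{obs:desB} we have the following,'' citing the fact that shard intersection lattices are graded by codimension as a known result from Reading~\cite{R} rather than re-deriving it. Your second and third paragraphs, which sketch why covers drop dimension by exactly one via the combinatorial model, are thus more than the paper itself provides; they are not wrong, but the paper treats ``rank $=$ codimension'' as already established in general and simply applies Observation~\ref{obs:desB}.
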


\subsection{Type $D_n$}

The root system of type $D_n$ lives in $V=\R^n$, with positive roots
\[ \Phi^+ = \{ \varepsilon_j \pm \varepsilon_i : 1 \leq i < j \leq n\}.\]
With respect to this choice, the base region $B$ is given by: \[ B = \{ (x_1,\ldots,x_n) \in \R^n : -x_2 < \pm x_1 < x_2 < \cdots < x_n \}.\]

Notice that the $D_n$ roots are all the $B_n$ roots save the standard basis elements. Thus the arrangement $\mathcal{A}(D_n)$ is the subarrangement of $\mathcal{A}(B_n)$ generated by the hyperplanes $H_{i,j}$ and $H_{i,-j}$ (but not $H_{0,i}$).

The rank two subarrangements of $\mathcal{A}(D_n)$ either look like $\mathcal{A}(A_1\times A_1)$ or like $\mathcal{A}(A_2)$, and we can identify all the cutting relations from the pictures in Figure \ref{fig:Asubs} and Figure \ref{fig:Bsubs} (b), (c), and (d).

The hyperplanes $H_{i,j}$, with $1\leq i < j \leq n$, are once again cut according to relation in Figure \ref{fig:Asubs}. We find $2^{j-i-1}$ shards of this hyperplane as in types $A_{n-1}$ and $B_n$.

Now consider a hyperplane $H_{i,-j}$ with $1\leq i < j\leq n$. The cutting relations for this hyperplane are given by parts (b), (c), and (d) of Figure \ref{fig:Bsubs}. From part (d) we see that for each $k$ such that $i < k < j$, we must choose whether $-x_k \leq x_i = -x_j$ or whether $x_i = -x_j \leq -x_k$, yielding $2^{j-i-1}$ choices.

The interaction between the relations in parts (b) and (c) are somewhat delicate. Since we have no hyperplanes of the form $H_{0,i}$, we do not know explicitly whether $x_i = -x_j$ is weakly positive or negative. However, if we know that, say, $x_i = -x_j \leq \pm x_k$, we can infer that $x_i=-x_j$ is negative. Likewise, if $\pm x_k \leq x_i = -x_j$, we can infer that $x_i = -x_j$ is positive. If $k$ is such that $1\leq k < i$ and both $x_k$ and $-x_k$ are on the same side of $x_i=-x_j$ we say $k$ is in the \emph{zero block} of the shard.

If the zero block is empty, we know that for each $k = 1,\ldots, i-1$, there are two choices:
\begin{itemize}
\item $x_k \leq x_i = -x_j, -x_i = x_j \leq -x_k$, or 
\item $-x_k \leq x_i=-x_j, -x_i = x_j \leq x_k$.
\end{itemize}
Thus there are $2^{j-i-1}\cdot 2^{i-1}$ shards of $H_{i,-j}$ with an empty zero block. Note, however, that $x_i= -x_j$ and $-x_i = x_j$ are incomparable.

We will now count the remaining shards in $H_{i,-j}$ according to the smallest element in the zero block. 

Suppose $h$ is the smallest element in the zero block. First of all, since the zero block is nonempty, we know whether $x_i=-x_j$ is weakly positive or weakly negative, giving two initial choices. Suppose, without loss of generality, that $\pm x_h \leq x_i = -x_j$.

Then for each $g=1,\ldots,h-1$, there are two choices:
\begin{itemize}
\item $x_g \leq -x_i = x_j \leq \pm x_h \leq x_i = -x_j \leq -x_g$, or
\item $-x_g \leq -x_i = x_j \leq \pm x_h \leq x_i = -x_j \leq x_g$.
\end{itemize}
For each $k=h+1,\ldots,i-1$, there are three choices:
\begin{itemize}
\item $x_k \leq -x_i = x_j \leq \pm x_h \leq x_i = -x_j \leq -x_k$,
\item $-x_i = x_j \leq \pm x_h, \pm x_k \leq x_i = -x_j$, or
\item $-x_k \leq -x_i = x_j \leq \pm x_h \leq x_i = -x_j \leq x_k$.
\end{itemize}
Hence, we find a total of $2^{j-i-1}\cdot 2\cdot 2^{h-1}\cdot 3^{i-1-h}$ choices for a given $h$.

Pulling all the cases for the zero block together (empty and $h=1,\ldots,i-1$) we find a total of:
\begin{align*}
 2^{j-i-1}\left(2^{i-1} + 2\cdot 3^{i-2} + \cdots + 2^{i-2}\cdot 3 + 2^{i-1}\right) &= 2^{j-i-1}( 2^{i-1} + 2( 3^{i-1} - 2^{i-1}) ) \\
 &= 2^{j-i-1}( 2\cdot 3^{i-1} - 2^{i-1}) \\
 &= 2^{j-i}\cdot 3^{i-1} - 2^{j-2}
\end{align*} 
shards in $H_{i,-j}$.

We have now characterized the shards of type $D_n$. In particular we have the following companion to Propositions \ref{prp:Ashards} and \ref{prp:Bshards}.

\begin{prp}\label{prp:Dshards}
For all $1\leq i < j \leq n$, the hyperplane $H_{i,j}$ has $2^{j-i-1}$ shards, while the hyperplane $H_{i,-j}$ has $2^{j-i}\cdot 3^{i-1} - 2^{j-2}$ shards. Therefore, there are \[ \sum_{1\leq i < j \leq n} 2^{j-i-1} + 2^{j-i}\cdot3^{i-1}-2^{j-2} = 3^n - n2^{n-1} - n -1\] shards of $\mathcal{A}(D_n)$ in all.
\end{prp}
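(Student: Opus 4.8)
The plan is to read the shard counts directly off the classification of rank-two subarrangements already recorded in Figures \ref{fig:Asubs} and \ref{fig:Bsubs}, and then to collapse the resulting double sum into closed form. Recall that a shard of a hyperplane $H$ is specified by a realizable choice of side $\langle\cdot,\epsilon\beta\rangle\ge 0$ for each basic hyperplane $H_\beta$ that cuts $H$, so for each hyperplane the job is purely combinatorial: enumerate the sign patterns cut out by $H$ that are actually carried by a full-dimensional cone.

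First I would dispatch $H_{i,j}$ with $1\le i<j\le n$. Since $\mathcal{A}(D_n)\subseteq\mathcal{A}(B_n)$ omits only the coordinate hyperplanes $H_{0,i}$, every rank-two subarrangement cutting $H_{i,j}$ is of type $A_2$, exactly as in Figure \ref{fig:Asubs}: for each $k$ with $i<k<j$ one chooses independently whether $x_k\le x_i=x_j$ or $x_i=x_j\le x_k$, and each of the $2^{j-i-1}$ choices yields a nonempty cone, just as in types $A_{n-1}$ and $B_n$.

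The substance is $H_{i,-j}$, which I would organize by its \emph{zero block}, as in the discussion preceding the statement. The cuts come from parts (b), (c), (d) of Figure \ref{fig:Bsubs}. Part (d) gives an independent binary choice for each $k$ with $i<k<j$, a factor $2^{j-i-1}$ multiplying everything. The indices $k<i$ are governed by (b) and (c). Here is the main obstacle: without the hyperplanes $H_{0,i}$, the sign of the common value $x_i=-x_j$ on $H_{i,-j}$ is not fixed, so the admissible sign patterns are exactly those not forcing every coordinate to $0$. I would make this precise with the bookkeeping already sketched: if, for every $k<i$, the values $x_k$ and $-x_k$ lie on opposite sides of $x_i=-x_j$ (empty zero block), there are $2^{i-1}$ patterns; if $h$ is the least index with $\pm x_h$ on one common side, then that side is thereby determined (a factor $2$), each $g<h$ admits $2$ choices and each $h<k<i$ admits $3$ choices, giving $2\cdot 2^{h-1}3^{i-1-h}$ patterns. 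Summing,
\[
2^{i-1}+\sum_{h=1}^{i-1} 2\cdot 2^{h-1}3^{i-1-h}=2^{i-1}+2\bigl(3^{i-1}-2^{i-1}\bigr)=2\cdot 3^{i-1}-2^{i-1},
\]
so $H_{i,-j}$ carries $2^{j-i-1}\bigl(2\cdot 3^{i-1}-2^{i-1}\bigr)=2^{j-i}3^{i-1}-2^{j-2}$ shards. I would also carry out the routine-but-essential verification that each listed pattern is genuinely realized by a full-dimensional cone and that the displayed chains of inequalities are forced, so nothing is double-counted or missed.

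Finally I would sum over all pairs:
\[
\sum_{1\le i<j\le n}\bigl(2^{j-i-1}+2^{j-i}3^{i-1}-2^{j-2}\bigr),
\]
and verify the claimed total $3^n-n2^{n-1}-n-1$ by induction on $n$. Isolating the $j=n$ terms, $\sum_{i=1}^{n-1}\bigl(2^{n-i-1}+2^{n-i}3^{i-1}-2^{n-2}\bigr)$, which evaluate via the geometric series $\sum_{i=1}^{n-1}2^{n-i}3^{i-1}=2\cdot 3^{n-1}-2^n$ and $\sum_{i=1}^{n-1}2^{n-i-1}=2^{n-1}-1$ to $2\cdot 3^{n-1}-(n+1)2^{n-2}-1$, one checks this is exactly the difference between the asserted values for $n$ and $n-1$. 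The first two summands reproduce the type-$B_n$ hyperplane contributions and the $-2^{j-2}$ corrections account precisely for the shards lost by deleting the coordinate hyperplanes, so the arithmetic is the same "easily verified by induction" remark made for Proposition \ref{prp:Bshards}. I expect the realizability and consistency check for the $H_{i,-j}$ sign patterns to be the only place requiring genuine care; everything else is a finite computation.
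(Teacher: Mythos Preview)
Your proposal is correct and follows essentially the same argument as the paper: the per-hyperplane counts are obtained by the identical zero-block case analysis (empty zero block contributing $2^{i-1}$, and each minimal zero-block index $h$ contributing $2\cdot 2^{h-1}3^{i-1-h}$), collapsed by the same geometric-series identity. Your explicit induction verifying the closed form for the total is a welcome addition, as the paper simply asserts the summation identity.
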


A common description of the elements $w \in D_n$ is as \emph{even signed permutations}, i.e., elements $w \in B_n$ such that there are an even number of negative numbers among $\{w(1),\ldots,w(n)\}$. Here, we prefer to declare that the sign of $w(1)$ is not known. That is, we know $\{w(1), w(-1) \} = \{ j, -j\}$, but we don't know which is which. We write elements as ``forked" signed permutations, e.g., \begin{equation}\label{eq:fork}
 w = \bar 2 3 1 5 \begin{array}{c}4 \\ \bar 4\end{array} \bar 5 \bar 1 \bar 3 2
\end{equation}
corresponds to $\{w(1),-w(1)\} = \{4,-4\}$, $w(2) = -5$, $w(3)=-1$, $w(4) = -3$, and $w(5) = 2$. As an even signed permutation, we would write $w = \bar 4 \bar 5 \bar 1 \bar 3 2$. We choose the forked model because it is more indicative of the geometry of the corresponding region in the complement of $\mathcal{A}(D_5)$: \[ -x_2 < x_3 < x_1 < x_5 < \pm x_4 < -x_5 < -x_1 < -x_3 < x_2.\]  

A \emph{descent} of an element $w \in D_n$ is a letter $w(i)$, $i\geq -1$, such that $w(i) > w(|i|+1)$. That is, we have the usual notion of descents in $w(1)\cdots w(n)$, along with one more if $w(-1) > w(2)$. Let $d_D(w)$ denote the number of type $D_n$ descents of $w$. For example, $w$ shown in \eqref{eq:fork} has $d_D(w) =3$.

We draw $w \in D_n$ as an array with a mark in column $i\geq 0$, row $j$ if $w(i+1) = j$. We put a mark in column $i \leq 0$, row $j$ if $w(i-1) = j$. In effect, we draw $w$ as if it is a type $B_n$ element, then slide $w(i)$ one step left for $i$ positive, one step right for $i$ negative. Hence, $w(1)$ and $-w(1)$ appear in the same center column. Again, we draw solid lines in descent positions. For example, $w = \bar 2 3 1 5 \begin{array}{c}4 \\ \bar 4\end{array} \bar 5 \bar 1 \bar 3 2$ is drawn as:
\begin{equation}\label{eq:Dex}
\begin{tikzpicture}[>=stealth',bend angle=45,auto,xscale=.8, yscale=.7]
\tikzstyle{state}=[circle,draw=black,minimum size=4mm]
\draw (-1,5) node[state] (5) {$5$};
\draw (-4,-1) node[state] (2b) {$\bar 2$};
\draw (0,-3) node[state] (4b) {$\bar 4$};
\draw (3,-2) node[state] (3b) {$\bar 3$};
\draw (-2,1) node[state] (1) {$1$};
\draw (2,0) node[state] (1b) {$\bar 1$};
\draw (-3,3) node[state] (3) {$3$};
\draw (0,4) node[state] (4) {$4$};
\draw (4,2) node[state] (2) {$2$};
\draw (1,-4) node[state] (5b) {$\bar 5$};
\draw[line width=4] (5)--(4)--(5b);
\draw[line width=4] (5)--(4b)--(5b);
\draw[line width=4] (3)--(1);
\draw[line width=4] (1b)--(3b);
\draw[dashed, ->] (2b)--(-.35,-1);
\draw[dashed, ->] (-2.35,2)--(-.75,2);
\draw[dashed, ->] (.35,2)--(2);
\draw[dashed, ->] (.75,-1)--(2.35,-1);
\end{tikzpicture}
\end{equation}
The partial order on blocks in this case is similar to earlier cases, with one caveat. Usually, if it is possible to draw a horizontal line to connect two decreasing runs, the block on the left is considered less than the block on the right. However, if $w(1)$ and $w(-1)$ are in distinct blocks, these blocks are only comparable if there is a triple $i < k < j$ with $i,j$ in the block containing $w(1)$ and $k$ in the block containing $w(-1)$. For example, in \eqref{eq:Dex2} the block containing $w(-1)$ and the block containing $w(1)$ are incomparable:
\begin{equation}\label{eq:Dex2}
\begin{tikzpicture}[>=stealth',bend angle=45,auto,xscale=.8, yscale=.7]
\tikzstyle{state}=[circle,draw=black,minimum size=4mm]
\draw (0,5) node[state] (5) {$5$};
\draw (-4,-1) node[state] (2b) {$\bar 2$};
\draw (1,-3) node[state] (4b) {$\bar 4$};
\draw (3,-2) node[state] (3b) {$\bar 3$};
\draw (-2,1) node[state] (1) {$1$};
\draw (2,0) node[state] (1b) {$\bar 1$};
\draw (-3,3) node[state] (3) {$3$};
\draw (-1,4) node[state] (4) {$4$};
\draw (4,2) node[state] (2) {$2$};
\draw (0,-4) node[state] (5b) {$\bar 5$};
\draw[line width=4] (4)--(5b);
\draw[line width=4] (5)--(4b);
\draw[line width=4] (3)--(1);
\draw[line width=4] (1b)--(3b);
\draw[dashed, ->] (2b)--(-.45,-1.5);
\draw[dashed, ->] (2b)--(.55,-.5);
\draw[dashed, ->] (-.25,-1.5)--(2.35,-1);
\draw[dashed, ->] (-2.35,2)--(-.8,1.5);
\draw[dashed, ->] (-2.35,2)--(.2,2.5);
\draw[dashed, ->] (.25,2.5)--(2);
\draw[dashed, ->] (.75,-.5)--(2.35,-1);
\draw[dashed, ->] (-.65,1.5)--(2);
\end{tikzpicture}
\end{equation} 

In either case, we get a pre-order on $\{-n,\ldots,-1,1,\ldots,n\}$, which we call a \emph{forked permutation pre-order}.

For any $w \in D_n$, we define a cone of points $C(w)$ in $\R^n$ just as in the $A_{n-1}$ and $B_n$ cases. Specifically,
\begin{itemize}
\item if $i$ and $j$ are in the same block in $w$, we have $x_i=x_j$, with the understanding that $x_{-i} =-x_i$,
\item if $i < k < j$ and $k$ is not in the same block as $i$ and $j$, then:
\begin{itemize}
\item[a)] $x_k \leq x_i = x_j$ if $k$ is less than $i$ in the pre-order given by $w$, and 
\item[b)] $x_i = x_j \leq x_k$ if $k$ greater than $i$ in the pre-order given by $w$.
\end{itemize}
\end{itemize}

The example shown in \eqref{eq:Dex} then corresponds to the set of points satisfying:
\[ -x_2, x_1 = x_3 \leq x_4=x_5=-x_4=-x_5 (=0) \leq -x_1=-x_3, x_2\] while the example shown in \eqref{eq:Dex2} corresponds to:
\[ -x_2, x_1 = x_3 \leq x_4=-x_5, x_4=-x_5 \leq -x_1=-x_3, x_2 \]

As with earlier cases, we can determine the dimension of $C(w)$ by the number of nonzero blocks and whether there are any coordinates equal to zero. We have the following.

\begin{obs}\label{obs:desD}
For any $w \in D_n$, \[ d_D(w) = n-\dim(C(w)).\] In particular, shards correspond to forked signed permutations with exactly one type $D_n$ descent.
\end{obs}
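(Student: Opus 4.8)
The ``in particular'' clause is a formal consequence of the dimension formula: a shard is precisely a codimension-one shard intersection, so, granting the type $D_n$ analogue of Proposition~\ref{prp:Bwelldef} (which identifies the shard intersections with the cones $C(w)$), the shards are exactly the $C(w)$ with $\dim C(w)=n-1$, i.e. with $d_D(w)=1$. Thus everything reduces to the equality $d_D(w)=n-\dim C(w)$, and the plan is to adapt the proofs of Observations~\ref{obs:des} and~\ref{obs:desB}.

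First I would reduce to a counting statement. The cone $C(w)$ sits inside the linear subspace $L(w)\subseteq\R^n$ cut out by the block-equalities of $w$ (under the conventions $x_{-k}=-x_k$), and—exactly as in types $A_{n-1}$ and $B_n$—one checks that $C(w)$ is full-dimensional in $L(w)$ (the pre-order induced by the blocks of $w$ is antisymmetric, so the inequalities cutting out $C(w)$ can be made simultaneously strict, yielding an interior point); hence $\dim C(w)=\dim L(w)$. Now the blocks of $w$ partition $\{-n,\dots,-1,1,\dots,n\}$, the partition is invariant under $k\mapsto -k$, and a block $B$ forces its coordinates to a common value $v_B$ with $v_{-B}=-v_B$; a self-paired block ($B=-B$) must then have $v_B=0$. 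Choosing $v_B$ freely on one block from each pair $\{B,-B\}$ parametrizes $L(w)$, so $\dim C(w)=(r-s)/2$, where $r$ is the number of blocks of $w$ and $s$ the number of self-paired ones. Here $s\le 1$: a self-paired block contains some $k$ and $-k$, hence in the forked diagram it meets columns on both sides of the central column $0$; but column $0$ carries only the two entries $w(1)$ and $w(-1)$, so a crossing block contains one of them and a self-paired one contains both, whence two distinct self-paired blocks would share $w(1)$, which is absurd. Thus $s\in\{0,1\}$, with $s=1$ exactly when $w(1)$ and $w(-1)$ lie in a common block.

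It remains to identify $(r-s)/2$ with $n-d_D(w)$. Write $d_D(w)=D+\delta$ with $D=\#\{1\le i\le n-1:w(i)>w(i+1)\}$ and $\delta\in\{0,1\}$ recording whether $w(-1)>w(2)$. Reading off the forked diagram, the interiors of the two arms $w(1),\dots,w(n)$ and $w(-1),\dots,w(-n)$ contribute decreasing runs governed by the descents of $w$, and the arms interact only through the four adjacencies around column $0$, namely $w(-2)\!-\!w(-1)$, $w(1)\!-\!w(2)$, $w(-2)\!-\!w(1)$, $w(-1)\!-\!w(2)$ (there being no top-bottom adjacency between $w(1)$ and $w(-1)$). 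The first two are descending together according as $w(1)>w(2)$, the last two together according as $w(2)<-w(1)$ (this second condition being exactly $\delta=1$); which regime occurs for given relative sizes of $w(2)$ and $|w(1)|$ is determined by the sign of $w(1)$ in the even-signed-permutation representative. Running through the three cases $w(2)>|w(1)|$, $|w(2)|<|w(1)|$, $w(2)<-|w(1)|$—in which, respectively, no fork-edge is descending, exactly two are, or all four are and the arms fuse around a $4$-cycle into one self-paired block—one obtains in each case $(r-s)/2=n-D-\delta$, hence $\dim C(w)=n-d_D(w)$.

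I expect this last count to be the main obstacle. The forked model intentionally suppresses the sign of $w(1)$, whereas $d_D(w)$ is defined through the even-signed-permutation representative, so one must run the run-counting argument intrinsically on the diagram and then reconcile it with $d_D$; moreover the bookkeeping around the central column has genuinely more cases (and the $4$-cycle) than the analogous points in types $A_{n-1}$ and $B_n$. Everything away from the fork is a direct transcription of the earlier arguments.
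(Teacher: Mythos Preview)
The paper does not give a proof of this observation; it simply precedes the statement with the one-line remark ``As with earlier cases, we can determine the dimension of $C(w)$ by the number of nonzero blocks and whether there are any coordinates equal to zero.'' Your argument is correct and is exactly the fleshed-out version of that remark: reduce to $\dim C(w)=\dim L(w)$, parametrize $L(w)$ by one coordinate per pair of non-self-paired blocks to get $\dim L(w)=(r-s)/2$, and then match this against $n-d_D(w)$ via a case analysis at the fork. Your graph-theoretic count (vertices $=2n$, descent edges $=2d_D(w)$, and at most one cycle---the $4$-cycle at the fork, occurring precisely when the block is self-paired) makes the identity $(r-s)/2=n-d_D(w)$ transparent, and this is the genuinely new bookkeeping relative to types $A_{n-1}$ and $B_n$.

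One small simplification: the obstacle you anticipate---that the forked model suppresses the sign of $w(1)$ while $d_D(w)$ is defined via the even-signed representative---is not actually an obstacle. Of the two potential descents at $i=\pm 1$, namely $w(1)>w(2)$ and $w(-1)>w(2)$, the \emph{number} that hold depends only on where $w(2)$ sits relative to $\pm|w(1)|$, not on which of $\pm|w(1)|$ is $w(1)$. Hence $d_D(w)$ is already well-defined on the forked model, and no reconciliation with the even-signed representative is needed. This lets you collapse your middle case ($|w(2)|<|w(1)|$) into a single sub-case rather than splitting on the sign of $w(1)$.
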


Thus, Proposition \ref{prp:Dshards} shows there are $3^n -n2^{n-1} -n -1$ elements of $D_n$ with exactly one descent.

That the cones $C(w)$ correspond to intersections of type $D_n$ shards follows from explicit decomposition of a given cone into shards along similar lines as earlier cases, and we have the following.

\begin{prp}
Every intersection of $D_n$ shards equals $C(w)$ for some $w \in D_n$, and for any $w\in D_n$, the cone corresponding to $w$ is an intersection of shards.
\end{prp}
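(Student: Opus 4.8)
The plan is to follow the template of Propositions~\ref{prp:Awelldef} and~\ref{prp:Bwelldef}, proving the two containments separately by the same two arguments used there, with extra bookkeeping to handle the forked structure and the zero block.

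\textbf{Every intersection of $D_n$ shards is some $C(w)$.} The empty intersection is $V\cong\R^n$, corresponding to the forked permutation pre-order with all singleton blocks, and by Observation~\ref{obs:desD} a single shard is exactly $C(w)$ for some $w$ with one type $D_n$ descent. Proceed by induction on the number $r$ of shards: assume $\bigcap_{i=1}^{r-1}\Sigma_i = C(u)$ and let $\Sigma_r = C(v)$ be a new shard. The defining relation of $\Sigma_r$ is a single equality $x_a = x_b$ (equivalently, by symmetry, $x_{-a}=x_{-b}$) together with side conditions on the coordinates lying strictly between. Intersecting with $C(u)$ forces the blocks of $u$ containing $a$ and $b$ (and, symmetrically, those containing $-a$ and $-b$) to merge, and, exactly as in the consistency phenomenon already seen in types $A$ and $B$, any element $k$ that lay on one side of $\{a,b\}$ in $u$ but on the other side in $v$ is pulled into the merged block. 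One then checks that the resulting pre-order $w$ is a legitimate forked permutation pre-order --- in particular that the conditional comparability rule for the blocks containing $w(1)$ and $w(-1)$ is respected --- and that $\Sigma_r\cap C(u)=C(w)$.

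\textbf{Every $C(w)$ is an intersection of shards.} If $w$ has one descent we are done; otherwise we exhibit the shards explicitly, as in the proof of Proposition~\ref{prp:Awelldef}. For each pair $w(p)>w(q)$ lying in a common decreasing run of $w$, let $\Sigma$ be the shard supported on $\{x_{w(p)}=x_{w(q)}\}$ whose side conditions record, for every value $k$ strictly between $w(q)$ and $w(p)$ in the pre-order, the side of $\{w(p),w(q)\}$ on which $k$ sits in $w$, and --- when the supporting hyperplane has the form $H_{i,-j}$ --- whose zero-block assignment is inherited from $w$. Each such $\Sigma$ contains $C(w)$, and together the relations these shards impose are precisely the relations defining $C(w)$, so their intersection equals $C(w)$. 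The point requiring care is that $\mathcal{A}(D_n)$ contains no hyperplanes $H_{0,i}$, so that $x_i=-x_j$ and $-x_i=x_j$ are a priori incomparable, as remarked after Proposition~\ref{prp:Dshards}: when $w$ has empty zero block one must use shards of $H_{i,-j}$ with empty zero block, and when $w$ has a nonempty zero block with minimum $h$ one must use shards that agree with $w$ on the location of $h$. The case analysis underlying Proposition~\ref{prp:Dshards} guarantees that shards of exactly these kinds exist.

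\textbf{Main obstacle.} In types $A$ and $B$ a block of a pre-order is determined by its underlying set and its position in the linear order; in type $D$ a block straddling the centre carries an additional, only partially determined, bit of sign data (captured by the zero block), and comparability of the $w(1)$ and $w(-1)$ blocks is conditional rather than automatic. Consequently the real work is twofold: verifying that the block-merging operation in the first direction always outputs a valid forked permutation pre-order and is compatible with the partial sign information already present; and verifying in the second direction that the explicit shards can be selected so as to match $w$'s zero-block data. Once these are in hand, the remainder of the argument is a transcription of the proofs of Propositions~\ref{prp:Awelldef} and~\ref{prp:Bwelldef}.
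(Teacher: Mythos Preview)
Your proposal is correct and takes essentially the same approach as the paper. In fact, the paper gives no proof at all for this proposition beyond the sentence ``follows from explicit decomposition of a given cone into shards along similar lines as earlier cases'' together with one worked example; your sketch faithfully unpacks what ``similar lines'' means, and your identification of the zero-block and forked-comparability issues as the $D_n$-specific points requiring care is exactly right.
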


As an example, we show how the forked permutation in \eqref{eq:Dex} can be written as an intersection of $D_n$ shards (with bars drawn to indicate divisions between the blocks):
\[
w = \bar 2 | 3 1 |5 \begin{array}{c}4 \\ \bar 4\end{array} \bar 5| \bar 1 \bar 3 |2 = \bar 4 \bar 5 |\bar 3 |\bar 2 |\begin{array}{c} 1 \\ \bar 1 \end{array}| 2| 3| 5 4 \cap \bar 2 | 1 | 3 | 4 \begin{array}{c} 5 \\ \bar 5 \end{array} \bar 4 | \bar 3 | \bar 1 | 2  \cap \bar 5 | \bar 4 | \bar 2 | \bar 1 \begin{array}{c} 3 \\ \bar 3 \end{array} 1 | 2 | 4 | 5
\]

The shard intersection order on $D_n$ is also analogous to earlier examples. We have $u \leq v$ in $(D_n,\leq)$ if and only if $C(v) \subseteq C(u)$, and the containment of cones can be easily captured by the merging of blocks consistent with the forked pre-order. 

The lattice of $D_n$ shard intersections is ranked by codimension, so by Observation \ref{obs:desD} we have the following.

\begin{obs}
The rank of a forked permutation $w$ in the shard intersection order $(D_n,\leq)$ is given by descent number: $\rk(w) = d_D(w)$. 
\end{obs}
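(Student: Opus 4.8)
The plan is essentially a two-line deduction, so the proposal is to assemble the ingredients already in place rather than to develop anything new. First I would recall that, by the Proposition immediately preceding, the map $w \mapsto C(w)$ identifies $(D_n,\le)$ with the poset of intersections of shards of the Coxeter arrangement $\mathcal{A}(D_n)$ ordered by reverse inclusion; this is precisely the shard intersection order $(\mathcal{R},\le)$ for type $D_n$. Reading \cite{R} proves that this poset is graded for any finite Coxeter arrangement, and that its rank function is the codimension of the corresponding shard intersection inside $V = \R^n$. Hence $\rk(w) = n - \dim C(w)$. Finally, Observation \ref{obs:desD} identifies $n - \dim C(w)$ with $d_D(w)$, and we are done. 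This is the verbatim analogue of Observation \ref{obs:rankA} and of the type $B_n$ rank statement, and I would present it that way.

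The only point that deserves a sentence of justification is that the grading is really \emph{by codimension}, i.e.\ that each cover relation $u \lessdot v$ in $(D_n,\le)$ satisfies $\dim C(v) = \dim C(u) - 1$. I would argue this directly from the forked permutation pre-order model: moving up to a cover of $u$ means performing a minimal symmetric merge of blocks (merging a block into another, and, simultaneously, their negatives, possibly creating a zero block), subject to the consistency condition of Proposition \ref{prp:order}; and inspection of the coordinate relations imposed by $C(w)$ shows that any such minimal merge kills exactly one degree of freedom among the $x_i$. Alternatively, one can simply cite Reading's general gradedness result, which already packages this for all finite Coxeter arrangements.

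I do not expect a genuine obstacle here. The only bookkeeping subtlety is the \emph{zero block} phenomenon special to type $D_n$ — the case where a merge forces a block containing $w(1)$ and $w(-1)$, and hence some coordinates to vanish — but this is exactly the case already analyzed in the discussion of the $D_n$ model and folded into Observation \ref{obs:desD}, so it introduces no new difficulty. Thus the proof amounts to: (i) quote the Proposition to realize $(D_n,\le)$ as a shard intersection lattice; (ii) quote gradedness-by-codimension; (iii) quote Observation \ref{obs:desD}.
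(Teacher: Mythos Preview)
Your proposal is correct and matches the paper's approach exactly: the paper justifies this observation with the single sentence ``The lattice of $D_n$ shard intersections is ranked by codimension, so by Observation \ref{obs:desD} we have the following,'' which is precisely your steps (ii) and (iii). Your additional remarks on why codimension gives the grading (via minimal merges or via Reading's general result) are reasonable elaborations but go beyond what the paper itself supplies.
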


\section{Shellability of the shard intersection orders}\label{sec:shell}

The goal of this section is to prove Theorem \ref{thm:EL}. We will extend Bancroft's EL-labeling for $(S_n,\leq)$ to an EL-labeling for $(B_n,\leq)$; a slight variation allows us to construct an EL-labeling for $(D_n,\leq)$ as well. Hence the order complexes for these posets are lexicographically shellable. 

First recall an \emph{edge labeling}, $\lambda$, of a poset $(P,\leq)$ is an assignment of a label for each edge in the Hasse diagram for $P$. That is, let $\mathcal{E}(P)$ denote the set of cover relations of $P$, denoted $x\prec y$ (i.e., $x < y$ and $x\leq z \leq y$ implies $x=z$ or $z=y$), and let $\Lambda$ be some totally ordered set. Then $\lambda$ is a function $\lambda: \mathcal{E}(P) \to \Lambda$. If $c$ is an unrefinable chain: $x_0 \prec x_1 \prec \cdots \prec x_k$ of elements in $P$, i.e., $x_i \prec x_{i+1}$ is a cover relation for all $i$, then we refer to the label of $c$ as $\lambda(c)=(\lambda(x_0,x_1), \lambda(x_1,x_2),\ldots, \lambda(x_{k-1},x_k))$. We say that a chain $c$ is \emph{rising} if $\lambda(c)$ is a weakly increasing sequence: $\lambda(x_0,x_1)\leq \lambda(x_1,x_2)\leq \cdots \leq \lambda(x_{k-1},x_k)$. Otherwise, if $\lambda(x_{i-1},x_i) > \lambda(x_i,x_{i+1})$ for some $i$, we say $c$ is \emph{has a fall} in $i$. A chain for which each cover is a fall is called a \emph{falling chain}.

\begin{defn}
A labeling $\lambda$ is an \emph{EL-labeling} if, for every interval $[u,v]$ in $P$:
\begin{itemize}
\item there is a unique rising maximal chain $c: u = u_0 \prec u_1 \prec \cdots \prec u_k = v$, and
\item if $d$ is any other maximal chain in $[u,v]$, then $\lambda(c) < \lambda(d)$ in lexicographic order.
\end{itemize}
\end{defn}

We remark that in many treatments, a rising chain is defined to have \emph{strictly} increasing labels, though in Bj\"orner's original work he uses weakly increasing labels \cite{Bj}. Either definition allows one to conclude shellability and so forth.

We now give EL-labelings for the shard intersection orders described earlier. While these labelings take Bancroft's labeling as a starting point and generalize it to other types, what we have is far from a uniform proof of shellability. 

\subsection{Proof of shellability for type $A_{n-1}$}

To describe our EL-labeling, we insert bars in ascent positions of permutations $u = d_1 | d_2 | \cdots | d_k$, so that each word $d_i$ is a maximal decreasing run, and hence a block in the corresponding permutation pre-order.

Given a pair of elements $u<v$ in $(S_n,\leq)$, define the ``merging blocks" of $u = d_1|\cdots | d_k$ (with respect to $v$) to be those blocks $d_i$ of $u$ that are not also blocks of $v$. For example, with $u=31|2|4|6|7|85$ and $v = 76|85431|2$, the merging blocks of $u$ are $d_1 = \{1,3\}$, $d_3 = \{4\}$, $d_4=\{6\}$, $d_5 = \{7\}$, and $d_6 = \{5,8\}$. Further, by ``merging pairs" we mean those (unordered) pairs of merging blocks whose union is contained in a block of $v$. In the example above, the merging pairs are $(d_1,d_3)$, $(d_1, d_6)$, $(d_3,d_6)$, and $(d_4,d_5)$. We identify the ``position" of a merging pair with the position of its rightmost block, so that we say the merging pairs above occur in positions 3, 6, 6, and 5. 

If $u \prec w$ is a cover relation in $(S_n,\leq)$, then by rank considerations, there is only one pair of merging blocks in $u$, say $(d_i,d_j)$, whose union is a block in $w$. Define the label of this edge by its position: \[ \lambda_A(u,w) = \max\{i, j\}.\] For example if $u = 31|2|4|6|7|85$ and $w = 31|2|4|76|85$, $\lambda_A(u,w) = 5$, since the blocks that merged were the fourth and the fifth. See Figure \ref{fig:EL8} for more examples. Our goal is to prove the following.

\begin{prp}\label{prp:ELA}
\emph{(\cite[Theorem 4.3]{B}).} 
The labeling $\lambda_A$ is an EL-labeling for $(S_n,\leq)$.
\end{prp}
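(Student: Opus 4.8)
# Proof Proposal for Proposition~\ref{prp:ELA}

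The plan is to verify the two defining properties of an EL-labeling directly on each interval $[u,v]$ in $(S_n,\leq)$, using the combinatorial description of the order from Proposition~\ref{prp:order}. Fix $u < v$. Passing from $u$ to $v$ amounts to partitioning the merging blocks of $u$ into groups, each group fusing (together with any blocks forced in between, by consistency) into a single block of $v$. A maximal chain in $[u,v]$ is therefore a sequence of single merges, each merging two currently-existing blocks, and the label of each edge is the \emph{position} (index of the rightmost of the two merged blocks, in the one-line/bar notation at that stage). The key structural observation is that positions are computed left-to-right and that merging two blocks only affects the labeling of blocks to their right by shifting indices; I would track this carefully so that ``position of a block'' is well-defined along a chain.

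For the \textbf{existence and uniqueness of the rising chain}, I would describe the canonical rising chain explicitly and then argue no other chain is rising. The natural candidate: at each step, among all merges that must still be performed, choose the one of \emph{smallest} position, breaking ties in a prescribed way (e.g.\ merge the block into its partner on the appropriate side dictated by $v$; within a group fusing to one block of $v$, always absorb the leftmost available merging block into the running union). I would check: (i) this is always a legal cover relation in $[u,v]$ (consistency is automatically respected because the target is $v$ itself, so all intermediate relations are those forced by $v$); (ii) the sequence of labels produced is weakly increasing — here the subtle point is that after performing the smallest-position merge, the minimum available position at the next step is $\geq$ the one just used, which I would verify by noting that merges to the right of position $p$ still have position $\geq p$, merges to the left have been exhausted, and a merge that ``straddled'' position $p$ cannot have had a smaller position to begin with; and (iii) it is the unique rising chain, because in any rising chain the first label must be the globally minimal achievable position (else an earlier smaller label appears later, contradicting rising), and this minimal choice forces the merge up to the tie-breaking convention, after which one induts on the interval $[u',v]$ where $u' $ is the result of that forced merge.

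For the \textbf{lexicographic minimality}, I would show that for any other maximal chain $d$ in $[u,v]$, $\lambda(c) < \lambda(d)$ lexicographically, where $c$ is the rising chain. Let $i$ be the first position where $c$ and $d$ differ. The labels $\lambda(c_1),\dots,\lambda(c_{i-1})$ agree with $\lambda(d_1),\dots,\lambda(d_{i-1})$, so after $i-1$ steps both chains sit at the same element; since $c$ always takes the minimum available position, $\lambda(c_i) \leq \lambda(d_i)$, and by the choice of $i$ the inequality is strict. This is essentially immediate once existence/uniqueness of the rising chain is in hand, because minimality of each step of $c$ is built into its construction.

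The \textbf{main obstacle} I anticipate is bookkeeping around how ``position'' changes dynamically along a chain — when two blocks merge, all block indices to their right decrease by one, so the claim that ``the minimum remaining position never decreases along the canonical chain'' requires a careful argument about how the multiset of positions of outstanding merges evolves. A clean way to handle this is to index merges not by their current ordinal position but by an intrinsic, chain-independent quantity (for instance, the position \emph{in $u$} of the rightmost of the two original merging blocks involved, or equivalently a label read off from $v$ directly), prove the labeling can be computed this way, and then the monotonicity statements become static combinatorial facts about $u$ and $v$ rather than facts about a moving configuration. Once that reformulation is set up, both EL-labeling axioms follow by the induction on $[u',v]$ sketched above.

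\begin{proof}
See the sketch above; we omit the routine verification, which is carried out in full in \cite[Theorem~4.3]{B}.
\end{proof}
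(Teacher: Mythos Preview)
Your overall strategy---build the lexicographically first chain greedily by always taking the merge of smallest position, show it is rising, then show every other chain has a fall---is exactly the paper's approach. But you are missing the one lemma that makes the argument work, and in its place you have inserted something false.

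The missing fact is that the merging pair of minimal position is \emph{unique}: there are never ties. The paper proves this in one line from transitivity of the merging relation (if $(d_i,d_j)$ and $(d_k,d_j)$ are both merging pairs with $i,k<j$, then $(d_i,d_k)$ is also a merging pair, in position $\max\{i,k\}<j$). Your proposal instead speaks of ``breaking ties in a prescribed way,'' which is not only unnecessary but actively damages the rest of your argument. In particular, your lexicographic-minimality step asserts that if $\lambda(c_1),\ldots,\lambda(c_{i-1})$ agree with $\lambda(d_1),\ldots,\lambda(d_{i-1})$ then ``after $i-1$ steps both chains sit at the same element.'' That implication is false in general: two distinct merges can share the same position (e.g.\ $(d_2,d_5)$ and $(d_3,d_5)$ both have label $5$). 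What saves you is precisely the uniqueness of the \emph{minimal}-position merge, which forces $c$'s first step and lets you recurse on $[u_1,v]$; any other first step for $d$ then has strictly larger label by that same uniqueness.

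Similarly, your argument that any non-canonical chain fails to be rising (``else an earlier smaller label appears later'') is vague. The paper's version is sharper and you should adopt it: if $w\neq u_1$ is a cover of $u$ obtained by merging a pair in position $j$, then the leftmost merging pair of $w$ (with respect to $v$) sits in some position $b\leq j-1$, because the pair that \emph{would} have given $u_1$ is still unmerged in $w$ and now lives among the first $j-1$ blocks. Hence the very next greedy step from $w$ has label $b<j=\lambda_A(u,w)$, an immediate fall. Once you have this and the uniqueness lemma, the induction on interval length you sketched goes through without any need to relabel by ``intrinsic, chain-independent quantities.''
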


\begin{figure}
\begin{tikzpicture}
\tikzstyle{state1}=[rectangle,draw=black,scale=.55]
\draw (0,-1) node[state1] (u)
 {\begin{tikzpicture}[>=stealth',bend angle=45,auto,scale=.6]
 \tikzstyle{state}=[circle,draw=black,minimum size=4mm]
 \draw (1,3) node[state] (1) {$3$};
 \draw (2,1) node[state] (2) {$1$};
 \draw (3,2) node[state] (3) {$2$};
 \draw (4,4) node[state] (4) {$4$};
 \draw (5,6) node[state] (5) {$6$};
 \draw (6,7) node[state] (6) {$7$};
 \draw (7,8) node[state] (7) {$8$};
 \draw (8,5) node[state] (8) {$5$};
 \draw[line width=4] (1)--(2);
 \draw[line width=4] (7)--(8);
 \draw[dashed, ->] (1.5,2)--(3);
 \draw[dashed, ->] (5)--(7.5,6);
 \draw[dashed, ->] (6)--(7.3,7);
 \end{tikzpicture}};
 \draw (-6,4) node[state1] (u1)
 {\begin{tikzpicture}[>=stealth',bend angle=45,auto,scale=.6]
 \tikzstyle{state}=[circle,draw=black,minimum size=4mm]
 \draw (1,4) node[state] (1) {$4$};
 \draw (2,3) node[state] (2) {$3$};
 \draw (3,1) node[state] (3) {$1$};
 \draw (4,2) node[state] (4) {$2$};
 \draw (5,6) node[state] (5) {$6$};
 \draw (6,7) node[state] (6) {$7$};
 \draw (7,8) node[state] (7) {$8$};
 \draw (8,5) node[state] (8) {$5$};
 \draw[line width=4] (1)--(2);
 \draw[line width=4] (2)--(3);
 \draw[line width=4] (7)--(8);
 \draw[dashed, ->] (2.5,2)--(4);
 \draw[dashed, ->] (5)--(7.5,6);
 \draw[dashed, ->] (6)--(7.3,7);
 \end{tikzpicture}};
 \draw (-2,4) node[state1] (u2)
 {\begin{tikzpicture}[>=stealth',bend angle=45,auto,scale=.6]
 \tikzstyle{state}=[circle,draw=black,minimum size=4mm]
 \draw (1,4) node[state] (1) {$4$};
 \draw (2,6) node[state] (2) {$6$};
 \draw (3,7) node[state] (3) {$7$};
 \draw (4,8) node[state] (4) {$8$};
 \draw (5,5) node[state] (5) {$5$};
 \draw (6,3) node[state] (6) {$3$};
 \draw (7,1) node[state] (7) {$1$};
 \draw (8,2) node[state] (8) {$2$};
 \draw[line width=4] (4)--(5);
 \draw[line width=4] (5)--(6);
 \draw[line width=4] (6)--(7);
 \draw[dashed, ->] (1)--(5.3,4);
 \draw[dashed, ->] (6.5,2)--(8);
 \draw[dashed, ->] (2)--(4.5,6);
 \draw[dashed, ->] (3)--(4.3,7);
 \end{tikzpicture}};
 \draw (2,4) node[state1] (u3)
 {\begin{tikzpicture}[>=stealth',bend angle=45,auto,scale=.6]
 \tikzstyle{state}=[circle,draw=black,minimum size=4mm]
  \draw (1,6) node[state] (1) {$6$};
 \draw (2,7) node[state] (2) {$7$};
 \draw (3,8) node[state] (3) {$8$};
 \draw (4,5) node[state] (4) {$5$};
 \draw (5,3) node[state] (5) {$3$};
 \draw (6,1) node[state] (6) {$1$};
 \draw (7,2) node[state] (7) {$2$};
 \draw (8,4) node[state] (8) {$4$};
 \draw[line width=4] (3)--(4);
 \draw[line width=4] (4)--(5);
 \draw[line width=4] (5)--(6);
 \draw[dashed, ->] (4.5,4)--(8);
 \draw[dashed, ->] (5.5,2)--(7);
 \draw[dashed, ->] (1)--(3.5,6);
 \draw[dashed, ->] (2)--(3.3,7);
 \end{tikzpicture}};
 \draw (6,4) node[state1] (u4)
 {\begin{tikzpicture}[>=stealth',bend angle=45,auto,scale=.6]
 \tikzstyle{state}=[circle,draw=black,minimum size=4mm]
 \draw (1,3) node[state] (1) {$3$};
 \draw (2,1) node[state] (2) {$1$};
 \draw (3,2) node[state] (3) {$2$};
 \draw (4,6) node[state] (4) {$6$};
 \draw (5,7) node[state] (5) {$7$};
 \draw (6,8) node[state] (6) {$8$};
 \draw (7,5) node[state] (7) {$5$};
 \draw (8,4) node[state] (8) {$4$};
 \draw[line width=4] (1)--(2);
 \draw[line width=4] (6)--(7);
 \draw[line width=4] (7)--(8);
 \draw[dashed, ->] (1.5,2)--(3);
 \draw[dashed, ->] (4)--(6.5,6);
 \draw[dashed, ->] (5)--(6.3,7);
 \end{tikzpicture}};
 \draw (0,9) node[state1] (v)
 {\begin{tikzpicture}[>=stealth',bend angle=45,auto,scale=.6]
 \tikzstyle{state}=[circle,draw=black,minimum size=4mm]
 \draw (1,6) node[state] (1) {$6$};
 \draw (2,7) node[state] (2) {$7$};
 \draw (3,8) node[state] (3) {$8$};
 \draw (4,5) node[state] (4) {$5$};
 \draw (5,4) node[state] (5) {$4$};
 \draw (6,3) node[state] (6) {$3$};
 \draw (7,1) node[state] (7) {$1$};
 \draw (8,2) node[state] (8) {$2$};
 \draw[line width=4] (3)--(4);
 \draw[line width=4] (4)--(5);
 \draw[line width=4] (5)--(6);
 \draw[line width=4] (6)--(7);
 \draw[dashed, ->] (1)--(3.5,6);
 \draw[dashed, ->] (2)--(3.35,7);
 \draw[dashed, ->] (6.5,2)--(8);
 \end{tikzpicture}};
\draw[thick] (u) -- node[auto=left] {3} (u1);
\draw[thick] (u) -- node[auto=left] {6} (u2);
\draw[thick] (u) -- node[auto=right] {6} (u3);
\draw[thick] (u) -- node[auto=right] {6} (u4);
\draw[thick] (v) -- node[auto=right] {5} (u1);
\draw[thick] (v) -- node[auto=right] {4} (u2);
\draw[thick] (v) -- node[auto=left] {5} (u3);
\draw[thick] (v) -- node[auto=left] {5} (u4);
\end{tikzpicture}
\caption{The edge labelings for an interval of length two with three merging blocks.}\label{fig:EL8}
\end{figure}
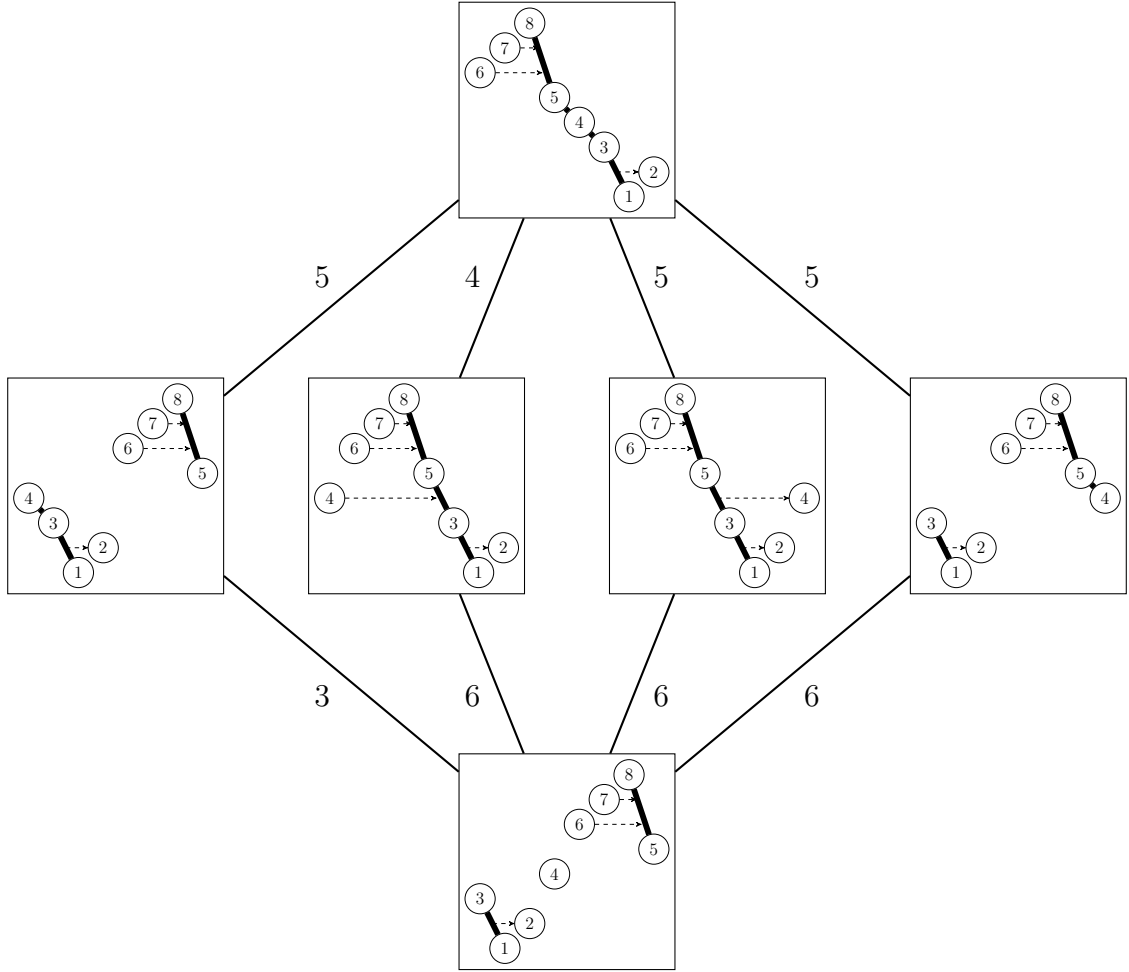

Notice that merging (with respect to a fixed $v>u$) defines a transitive relation on the blocks of $u$, i.e., if $(d_i,d_j)$ is a merging pair and $(d_j,d_k)$ is a merging pair, then $(d_i,d_k)$ is also a merging pair. This follows since the first two pairings imply that $d_i \cup d_j \cup d_k$ is contained in a block of $v$. 

This transitivity implies that, for a given $v>u$, the leftmost merging pair of $u$ is uniquely defined. That is, suppose $(d_i,d_j)$ is a merging block with position $j>i$ that has $j$ minimal among all merging pairs of $u$, and suppose $(d_k,d_j)$ is another merging pair in position $j$. Then either $i=k$, or transitivity of merging pairs implies $(d_i,d_k)$ is a merging pair, found in position $\max\{i,k\} < j$, contradicting the minimality of position $j$. With the labeling $\lambda_A$ in mind, this observation can be phrased as follows. 

\begin{obs}\label{obs:leftmost} 
Let $u \prec w \leq v$ be a triple of elements in $(S_n,\leq)$ for which $w$ is the cover of $u$ obtained by merging the leftmost pair of blocks (with respect to $v$). Then if $w' \neq w$ is any other cover of $u$ with $u \prec w' \leq v$, \[ \lambda_A(u,w) < \lambda_A(u,w').\]
\end{obs}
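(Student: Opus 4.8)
The plan is to reduce the statement to the two facts worked out in the paragraphs immediately preceding it: that ``merging'' (with respect to a fixed $v>u$) is a transitive relation on the blocks of $u$, and that, as a consequence, the leftmost merging pair occupies a strictly smaller position than every other merging pair. Given those, the observation becomes pure bookkeeping about the labels $\lambda_A$.

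First I would record the dictionary between covers of $u$ lying weakly below $v$ and merging pairs of $u$. If $u \prec w' \leq v$, then by the rank considerations already noted (rank $=$ descent number $=$ number of blocks minus one), $w'$ is obtained from $u$ by merging exactly one pair of blocks $(d_k,d_l)$ into a single block of $w'$; since $w' \leq v$, that block is contained in a block of $v$, so $(d_k,d_l)$ is a merging pair of $u$ with respect to $v$, and $\lambda_A(u,w') = \max\{k,l\}$ is precisely its position. Conversely, a merging pair determines at most one cover of $u$ (the element obtained by merging only those two blocks and leaving everything else alone), so distinct covers of $u$ below $v$ correspond to distinct merging pairs.

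Next I would run the comparison. Let $(d_i,d_j)$ be the leftmost merging pair, so that $p := \max\{i,j\}$ is minimal among the positions of all merging pairs of $u$ with respect to $v$, and $w$ is the cover merging $d_i$ and $d_j$, with $\lambda_A(u,w) = p$. Given any other cover $w' \neq w$ with $u \prec w' \leq v$, the dictionary produces a merging pair $(d_k,d_l)$ with $\lambda_A(u,w') = q := \max\{k,l\} \geq p$ by minimality of $p$. If $q = p$, then the uniqueness argument in the paragraph just before the Observation --- which uses transitivity of merging to rule out a second merging pair in the minimal position --- forces $(d_k,d_l) = (d_i,d_j)$, hence $w' = w$, a contradiction. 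Therefore $q > p$, i.e. $\lambda_A(u,w) < \lambda_A(u,w')$.

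The only point I would take care to spell out --- and the one I expect to be the mild sticking point rather than a genuine obstacle --- is that the hypothesis really does supply an honest cover $w$ below $v$ formed by merging the leftmost merging pair: merging a pair of blocks can in principle force intermediate blocks to merge as well (this is exactly the phenomenon in the proof of Proposition \ref{prp:Awelldef}), so one must either check this cannot happen for the leftmost pair or, as the statement is literally phrased, take the existence of such a cover $w$ as given. Everything else is a direct consequence of the transitivity of merging, so this observation is essentially immediate once the preceding two paragraphs are in hand.
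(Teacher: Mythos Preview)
Your proposal is correct and follows exactly the paper's approach: the paper states that transitivity of merging forces the leftmost merging pair to be the unique pair in minimal position, and then introduces the Observation with ``With the labeling $\lambda_A$ in mind, this observation can be phrased as follows,'' treating it as an immediate reformulation of that uniqueness. Your dictionary between covers $u \prec w' \leq v$ and merging pairs, and your care about whether the leftmost merge always yields a genuine cover, are slightly more explicit than the paper --- which, like the statement itself, simply takes the existence of such a cover $w$ as part of the hypothesis --- but the substance is identical.
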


Hence, every interval $[u,v]$ has a unique lexicographically first maximal chain obtained by merging pairs of blocks greedily from left to right. This chain: \begin{equation}\label{eq:c} c: u=u_0 \prec u_1 \prec \cdots \prec u_r =v\end{equation} has $u_i$ obtained from $u_{i-1}$ by merging the leftmost pair of blocks in $u_{i-1}$ (with respect to $v$).

To prove $\lambda_A$ is an EL-labeling, we still need to show that $c$ is a rising chain and that every other chain has a fall. To this end, it will be helpful to understand how the positions of merging blocks can change in moving up a chain in the interval $[u,v]$. 

Suppose $u = d_1|\cdots |d_k$, $w = e_1|\cdots | e_{k-1}$ and $u\prec w \leq v$. Suppose that $d_i$ and $d_j$ are the blocks of $u$ that merge in $w$. Then $d_i \cup d_j = e_{j'}$ for some $i\leq j' \leq j-1$ while the other blocks of $w$ are the other blocks of $u$, and they appear in more or less the same positions. Specifically,
\[
e_a = \begin{cases} d_a &\mbox{if } a < i \\
				   d_{a+1} &\mbox{if } a \geq j.
				   \end{cases}
\]
That is, a cover $w$ of $u$ typically looks like: 
\begin{align*} w &= e_1|\cdots|e_{i-1}|e_i| \cdots |e_{j-1}|e_j| \cdots |e_{k-1} \\
&= d_1| \cdots | d_{i-1}| e_i | \cdots | e_{j-1}| d_{j+1}|\cdots | d_k.
\end{align*}
Further, only one of the blocks among $e_i, \ldots, e_{j-1}$, i.e., $e_{j'} = d_i \cup d_j$, is not also block of $u$: \[ \{ e_i, \ldots, e_{j-1} \} = \{ d_{i+1}, \ldots, d_{j-1}, e_{j'}\}.\]

In particular, if $w=u_1$ is the cover of $u$ obtained by merging the leftmost merging pair, then the leftmost merging pair of $w$ is in a position greater than or equal to $j$ (that is, its righmost block is $d_{a+1}$ for some $a \geq j$). Applying this logic to every link in the lexicographically first chain $c$ from \eqref{eq:c}, we see the lexicographically first chain $c$ is a rising chain: \[ \lambda_A(u,u_1) \leq \lambda_A(u_1,u_2) \leq \cdots \leq \lambda_A(u_{r-1},v).\]

On the other hand, if $w \neq u_1$, the same characterization shows the leftmost merging pair of $w$ is in a position $b\leq j-1$. We can now use induction on the length of the interval to conclude that any chain other than $c$ has a fall. That is, suppose for induction that the only rising maximal chain in the interval $[w,v]$ is its lexicographically first chain, $c': w=w_0 \prec w_1 \prec \cdots \prec w_{r-1}=v$. Prepending $u$ to this chain gives a fall, since $\lambda_A(u,w)=j > b=\lambda_A(w,w_1)$. Any chain formed by prepending $u$ to another saturated chain from $w$ to $v$ already has a fall somewhere between $w$ and $v$ by the induction hypothesis.

This completes the proof that $\lambda_A$ is an EL-labeling, as stated in Proposition \ref{prp:ELA}.

\subsection{Proof of shellability for type $B_n$}

The labeling for type $B_n$ generalizes the idea for type $A_{n-1}$. The elements of $B_n$ are written in long form, as symmetric signed permutations $u = d_{-k}|\cdots |d_{-1}|d_0|d_1|\cdots |d_k$, where the blocks are again decreasing runs, and $d_{-j} = \{ -m : m \in d_j\}$. We can still apply the language of ``merging blocks" and ``merging pairs" as we did in type $A_{n-1}$. The primary difference here is that when we merge blocks $d_i$ and $d_j$, the blocks $d_{-i}$ and $d_{-j}$ also merge. Therefore when referring to a merging pair we really have two symmetric pairs of merging blocks. In the special case of $i=-j$, three blocks merge to form $d_{-j} \cup d_0 \cup d_j$. (This is also equivalent to the case $i=0$.) Nonetheless, we identify the position of the pair with the righmost block among $d_i, d_{-i}, d_j, d_{-j}$, so the position of the merging pair $(d_i,d_j)$ is $\max\{ |i|, |j|\}$. For simplicity, we will identify both merging pairs by the pair $(d_i,d_j)$ with $0\leq |i| < j$.

For example if $u = \bar 4 \bar 5| \bar 3 |\bar 2 | 1 0 \bar 1| 2 | 3| 54$ and $v = 5 4 \bar2 \bar3 | 1 0\bar1 | 3 2\bar 4\bar 5$, the merging blocks of $u$ (with respect to $v$) are: $d_{-3} = \{\bar 5, \bar 4\}, d_{-2} = \{\bar 3\}, d_{-1}=\{\bar 2\}, d_1 = \{2\}, d_2=\{3\}, d_3=\{4,5\}$. The merging pairs are: $(d_{-2},d_3)$ (with symmetric pair $(d_{-3},d_2)$), $(d_{-1},d_3)$  (with $(d_{-3},d_1)$), and $(d_1,d_2)$ (with $(d_{-2},d_{-1})$), which we say are in positions 3, 3, and 2, respectively.

We define the edge labeling for $(B_n,\leq)$ in terms of the position of the merging blocks just as in type $A_{n-1}$. If $w$ covers $u$ in $(B_n, \leq)$ and $(d_i,d_j)$ and $(d_{-i},d_{-j})$ are the merging blocks of $u$, \[ \lambda_B(u,w) = \max\{|i|,|j|\}.\] For example if $u = \bar 4 \bar 5| \bar 3 |\bar 2 | 1 0 \bar 1| 2 | 3| 54$ and $w= \bar3|54\bar2|10\bar1|2\bar4 \bar5|3$, then $\lambda_B(u,w) = 3$. We will prove the following analogue of Proposition \ref{prp:ELA}.

\begin{prp}\label{prp:ELB}
The labeling $\lambda_B$ is an EL-labeling for $(B_n,\leq)$.
\end{prp}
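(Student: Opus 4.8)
The plan is to mimic the type $A_{n-1}$ argument as closely as possible, transporting each of its ingredients to the hyperoctahedral setting. First I would record the analogue of Observation~\ref{obs:leftmost}: merging (with respect to a fixed $v > u$) is again a transitive relation on the blocks of $u$, but now the symmetry constraint must be built in. Concretely, if $(d_i,d_j)$ and $(d_j,d_k)$ are merging pairs, then $d_i \cup d_j \cup d_k$ (together with its negative) lies in a block of $v$, so $(d_i,d_k)$ is a merging pair; and in the degenerate case where one of these indices is $0$ (equivalently $i = -j$) the same conclusion holds after absorbing $d_0$. I would then argue that the leftmost merging pair---the one whose position $\max\{|i|,|j|\}$ is minimal---is uniquely determined: any two merging pairs sharing that minimal position $p$ would, by transitivity, produce a third merging pair of strictly smaller position, unless they coincide. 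The subtlety absent in type $A_{n-1}$ is that ``position $p$'' can be realized by the block $d_p$ or $d_{-p}$; but since merging $d_i$ with $d_j$ forces $d_{-i}$ with $d_{-j}$, the two symmetric pairs are one and the same move, so no genuine ambiguity arises. This gives, exactly as in Observation~\ref{obs:leftmost}, that the cover $w$ of $u$ obtained by greedily merging the leftmost pair satisfies $\lambda_B(u,w) < \lambda_B(u,w')$ for every other cover $w' \neq w$ with $u \prec w' \leq v$, hence every interval $[u,v]$ has a unique lexicographically first maximal chain.

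Next I would establish the bookkeeping lemma describing how block positions evolve along a cover $u \prec w \leq v$. Writing $u = d_{-k}|\cdots|d_0|\cdots|d_k$ and letting $(d_i,d_j)$ with $0 \leq |i| < j$ be the merging pair, the new element $w$ has the union $d_i \cup d_j$ (and its mirror $d_{-i}\cup d_{-j}$, and $d_{-j}\cup d_0 \cup d_j$ in the case $i=0$) reinserted among the blocks strictly between positions $|i|$ and $j$, while all blocks of position $> j$ keep their indices and all blocks of position $<|i|$ are untouched; the blocks strictly between are a permutation of $d_{|i|+1},\dots,d_{j-1}$ with exactly one of them replaced by the merged block. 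From this one reads off, just as in type $A_{n-1}$: (i) if $w = u_1$ is obtained by merging the leftmost pair, then the leftmost merging pair of $u_1$ has position $\geq j$, so iterating along the greedy chain gives a weakly increasing label sequence, i.e.\ the lexicographically first chain is rising; and (ii) if $w \neq u_1$, the leftmost merging pair of $w$ has position $b \leq j-1 < j = \lambda_B(u,w)$, so prepending $u$ to any rising chain out of $w$ creates a fall at the bottom. An induction on the length of $[u,v]$---identical in form to the type $A_{n-1}$ induction, with Observation~\ref{obs:leftmost}'s analogue as the base-case input---then shows the greedy chain is the unique rising maximal chain and is lexicographically smallest, which is precisely the EL condition.

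The main obstacle, and the place where real case-checking is needed rather than formal transcription, is the interaction of the ``consistency'' condition from Proposition~\ref{prp:order} with the forced symmetric merges---in particular the $i = -j$ (equivalently $i=0$) case, where a single merge absorbs the zero block and three blocks coalesce. There one must verify that transitivity of merging still holds (the zero block can only be swallowed once, and once $d_0$ has been merged into a symmetric pair the positions of the remaining blocks shift in the expected way), and that the greedy left-to-right procedure never stalls because a consistency violation blocks a merge that a later merge would require. I would handle this by checking directly, from the description of $C(w)$ for $B_n$ and the refinement/consistency reformulation of $\leq$, that whenever $d_i \cup d_j$ and $d_j \cup d_k$ are each contained in (symmetric) blocks of $v$, the combined block $d_{-k}\cup\cdots\cup d_k$-type union is consistent with $v$---the point being that $v$ itself witnesses a consistent arrangement of all these elements, so any union of its sub-blocks inherits consistency. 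Once that is in hand, the zero-block case is subsumed into the general argument and the proof of Proposition~\ref{prp:ELB} goes through verbatim along the lines of Proposition~\ref{prp:ELA}.
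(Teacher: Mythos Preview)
Your proposal is correct and follows essentially the same route as the paper: establish the analogue of Observation~\ref{obs:leftmost} via transitivity of merging (the paper calls the minimal-position pair ``innermost'' rather than ``leftmost''), deduce a unique lexicographically first chain, track how block indices shift under a cover to see that the greedy chain is rising and any other cover leaves a smaller-position merge available, then induct on interval length. Your third paragraph on the zero-block and consistency issues is more cautious than the paper, which simply records the $i=0$ case in the bookkeeping formula and moves on, but it is not a different argument.
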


Notice that this subsumes Proposition \ref{prp:ELA}, since $(S_n,\leq)$ is an interval in $(B_n,\leq)$, and if $u, w\in S_n$, $\lambda_B(u,w) = \lambda_A(u,w)$.

Because of the symmetric nature of type $B_n$ permutation pre-orders we will refer to ``innermost" rather than ``leftmost" merging pairs. Again, transitivity of the merging relation on blocks implies the innermost merging pair $(d_i,d_j)$, with $0\leq |i|<j$, is the only block in position $j$. Thus we have the following analogue of Observation \ref{obs:leftmost}.

\begin{obs}\label{obs:leftmostB}
Let $u \prec w \leq v$ be a triple of elements in $(B_n,\leq)$ for which $w$ is the cover of $u$ obtained by merging the innermost pair of blocks (with respect to $v$). Then if $w' \neq w$ is any other cover of $u$ with $u \prec  w' \leq v$, \[ \lambda_B(u,w) < \lambda_B(u,w').\]
\end{obs}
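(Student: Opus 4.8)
The plan is to follow the proof of Observation \ref{obs:leftmost} essentially verbatim, the only genuinely new work being the bookkeeping forced by the symmetry $d_{-i} = -d_i$ and by the possibility that a merge swallows the zero block.

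First I would record the structural facts about the merging relation on blocks of $u$ (with respect to a fixed $v > u$): it is transitive, exactly as in the type $A_{n-1}$ discussion, and it is symmetric, meaning $(d_i,d_j)$ is a merging pair if and only if $(d_{-i},d_{-j})$ is. From transitivity I would deduce that the innermost merging pair is the unique merging pair at its position. Concretely: let $(d_i,d_j)$, with $0 \le |i| < j$, be a merging pair whose position $j$ is minimal among all merging pairs of $u$, and suppose $(d_k,d_{j})$ (or, after applying the symmetry, $(d_k,d_{-j})$) were a second merging pair at position $j$. Then $d_i \cup d_j$ and $d_k \cup d_j$ lie in the same block of $v$, namely the one containing $d_j$, so $d_i \cup d_k$ lies there too; hence $(d_i,d_k)$ is a merging pair of position $\max\{|i|,|k|\} < j$, contradicting minimality. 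The degenerate case in which $d_j$ and $d_{-j}$ merge to one another (equivalently $i = 0$, so that $d_{-j} \cup d_0 \cup d_j$ becomes a single block) is handled by the same chain of inequalities: any competing merging pair at position $j$ again yields one strictly closer to the center.

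Given this uniqueness, the Observation follows immediately. If $w$ is the cover of $u$ obtained by merging the innermost pair $(d_i,d_j)$, then $\lambda_B(u,w) = j$ equals the minimum of the positions of the merging pairs of $u$, and $(d_i,d_j)$ is the only merging pair achieving it. By rank considerations any cover $w' \neq w$ with $u \prec w' \le v$ is obtained by merging a different merging pair of $u$ (together with its mirror image), which therefore has position strictly greater than $j$; hence $\lambda_B(u,w') > j = \lambda_B(u,w)$, as claimed.

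The one place that demands genuine care — and the main obstacle — is confirming that the symmetric-pair convention together with the rule ``position $= \max\{|i|,j\}$'' really does make the innermost pair unique in all configurations, including those where $d_0$ is nonempty or where the merge crosses the center. Once positions are measured by absolute value and each merging pair is identified with its mirror image, the transitivity argument from type $A_{n-1}$ transfers with no essential change.
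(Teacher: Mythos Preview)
Your proposal is correct and follows essentially the same approach as the paper: both argue that transitivity of the merging relation (together with the symmetry $d_{-i} = -d_i$) forces the innermost merging pair to be the unique pair at its position, from which the inequality on labels is immediate. Your write-up is in fact more detailed than the paper's, which dispatches the entire observation in a single sentence invoking transitivity.
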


As in the type $A_{n-1}$ case, this observation immediately implies that in any interval $[u,v]$ there is a uniquely defined lexicographcially first maximal chain. This chain: \[ c: u = u_0 \prec u_1 \prec  \cdots \prec  u_r = v \] is again formed by merging the innermost blocks at each cover. We will now see that $c$ is a rising chain in $[u,v]$, and that every other maximal chain in $[u,v]$ has a fall.

Let $u = d_{-k}|\cdots |d_{-1}|d_0|d_1|\cdots |d_k$ and let $w = e_{-k+1}|\cdots|e_{-1}|e_0|e_1|\cdots |e_{k-1}$ be a cover of $u$ such that $u\prec w\leq v$. Let $0\leq |i|< j$ and suppose $(d_i,d_j)$ and $(d_{-i},d_{-j})$ are the blocks of $u$ that merge to give $w$. That is, $d_i \cup d_j = e_{j'}$ and $d_{-i}\cup d_{-j} = e_{-j'}$, where $j'$ is such that $-j+1\leq j' \leq j-1$. Then the blocks farther from the center than $d_j$ remain untouched. If $i>0$ as well, the blocks closer to the center than $i$ also remain in place. We have:
\[
e_a = \begin{cases}  d_a &\mbox{if } |a| < i,\\
					d_{a+1} &\mbox{if } a \geq j, \\
				   d_{a-1} &\mbox{if } a \leq -j.				 
				   \end{cases}
\]
That is, if $i > 0$, a cover $w$ of $u$ typically looks like: 
\begin{align*} w &= e_{-k+1}|\cdots|e_{-j}|e_{-j+1}|\cdots|e_{-i}|e_{-i+1}|\cdots| e_{-1}|e_0|e_1|\cdots|e_{i-1}|e_i| \cdots |e_{j-1}|e_{j}| \cdots |e_{k-1} \\
&= d_{-k}|\cdots|d_{-j-1}|e_{-j+1}|\cdots|e_{-i}|d_{-i+1}|\cdots| d_{-1}|d_0|d_1|\cdots|d_{i-1}|e_i| \cdots |e_{j-1}|d_{j+1}| \cdots |d_{k}.
\end{align*}
To emphasize the difference with type $A_{n-1}$, if $i\leq 0$, a cover $w$ of $u$ is:
\begin{align*} w &= e_{-k+1}|\cdots|e_{-j}|e_{-j+1}|\cdots| e_{-1}|e_0|e_1|\cdots|e_{j-1}|e_{j}| \cdots |e_{k-1} \\
&= d_{-k}|\cdots|d_{-j-1}|e_{-j+1}|\cdots| e_{-1}|e_0|e_1|\cdots |e_{j-1}|d_{j+1}| \cdots |d_{k}.
\end{align*}
In either situation, nearly all the blocks between $d_{-j-1}$ and $d_{j+1}$ are also blocks of $u$: \[ \{ e_{-j+1}, \ldots, e_{j-1} \} = \begin{cases} \{ d_{-j+1}, \ldots, d_{j-1}\} - \{ d_i, d_{-i}, d_j, d_{-j} \} \cup \{ d_i \cup d_j, d_{-i} \cup d_{-j} \} & \mbox{ if $0 < |i|<j$},\\
\{ d_{-j+1}, \ldots, d_{j-1}\} - \{ d_0, d_j, d_{-j} \} \cup \{ d_{-j} \cup d_0 \cup d_{j} \} & \mbox{ if $i=0$}.
\end{cases}
\]

In particular, if $w=u_1$ is the cover of $u$ obtained by merging the innermost merging pair, then the innermost merging pair of $u_1$ is in a position $a \geq j$. Of course the same reasoning applies to every cover $u_{i-1} \prec  u_i$, so this shows that the chain $c$ is a rising chain:
\[ \lambda_B(u,u_1) \leq \lambda_B(u_1,u_2) \leq \cdots \leq \lambda_B(u_{r-1},v).\]

To see that any other maximal chain in $[u,v]$ has a fall, we argue by induction on the length of the interval. Suppose $w\neq u_1$ is a cover of $u$. By induction, we suppose the lexicographically first chain, $c': w=w_0 \prec  w_1 \prec  \cdots \prec  w_{r-1} = v$, is the only rising chain in $[w,v]$. However, the innermost merging pair of $w$ is in a position $b \leq j-1$, so $\lambda_B(u,w) = j > b = \lambda_B(w,w_1)$, and thus every saturated chain $u \prec w \prec \cdots \prec v$ must have a fall.

This completes the proof of Proposition \ref{prp:ELB}.

\subsection{Proof of shellability for type $D_n$}

The main idea for type $D_n$ is much the same as for type $B_n$. For $u \in D_n$ we write a forked signed permutation with bars in ascent positions: \[ u = d_{-k}|\cdots |d_{-1}|d_0|d_1|\cdots |d_k \quad \mbox{ or }\quad  u = d_{-k}|\cdots |d_{-2}|d_{-1},d_1|d_2|\cdots |d_k.\] The main difference from type $B_n$ is that the ``zero block" can consist of one block, $d_0$, or the two incomparable blocks $d_1$ and $d_{-1}$. In this case, the block $d_1$ is distinguished from $d_{-1}$ in that $|u(1)| \in d_1$. For example, in $\bar 2 | 31 | 5\begin{array}{c} 4\\ \bar 4\end{array} \bar 5| \bar 1 \bar 3| 2$, we have $d_0 = \{ \bar 5, \bar 4, 4, 5\}$, while in $\bar 2 | 31 | 4\begin{array}{ c} 5 \\ \bar 5 \end{array} \bar 4| \bar 1 \bar 3| 2$ we have $d_{-1} = \{\bar 5, 4\}$ and $d_1 = \{ \bar 4, 5\}$. 

We will define the merging blocks, merging pairs, and so on in the same way as for type $B_n$. Roughly speaking our EL-labeling will work the same way, with edges labeled by positions of merging pairs, though we have a different way to define position of a merging pair. The unique rising chain is lexicographically first, and it is obtained greedily, by merging the pairs in the smallest possible position at each rank.

Before proceeding, let us address the new notion of the ``position" of a pair of blocks in a type $D_n$ shard intersection. Let $u = d_{-k}|\cdots |d_k \in D_n$. If $0\leq i < j$, we say the pair $(d_i,d_j)$ (and its symmetric partner $(d_{-i},d_{-j})$) are in position $j$, as before. We call such a merge a ``positive" merge. However, if $0<-i < j$, that is, if $d_i$ and $d_j$ are on opposite sides of the center of $u$, then we call this type of merge a ``negative" merge. In this case we define $(d_i,d_j)$ (and $(d_{-i},d_{-j})$) to be in position $j+|u|$, where $|u|$ is the co-rank of $u$ in $(D_n,\leq)$. Note that $|u|=k$ is also the largest index on a block of $u$ and, by Observation \ref{obs:desD}, the dimension of the cone $C(u)$.

We define the labeling $\lambda_D$ as follows. Suppose $u \prec w$ is a cover in $(D_n,\leq)$ and that $(d_i,d_j)$ $(d_{-i},d_{-j})$ are the blocks of $u$ that are merged in $w$. We label this cover with the position of the merged pair. Suppose, without loss of generality, that $0\leq |i| < j$. Then,
\[
\lambda_D(u,w) = \begin{cases} j & \mbox{if } 0 \leq i < j, \\
 j + |u| & \mbox{if } 0 < -i < j.
\end{cases}
\]

For example, if $u = \bar 2 | 31 | 4\begin{array}{ c} 5 \\ \bar 5 \end{array} \bar 4| \bar 1 \bar 3| 2$, we could merge blocks $d_1$ and $d_3$ (and $d_{-1}$ with $d_{-3}$) to get $w = 31 | 4 \bar 2 \begin{array}{ c} 5 \\ \bar 5 \end{array} 2 \bar 4| \bar 1 \bar 3$. This is cover comes from a positive merge, and so would get the label $\lambda_D(u,w) = 3$.

As a different example, take $u = \bar 2 | 31 | 4\begin{array}{ c} 5 \\ \bar 5 \end{array} \bar 4| \bar 1 \bar 3| 2$ again, but merge blocks $d_{-1}$ and $d_2$ to get $w = \bar 2 | 531  \begin{array}{c} 4\\ \bar 4\end{array} \bar 1 \bar 3 \bar 5|2$. As this cover comes from a negative merge it is labeled as $\lambda_D(u,w) = j + |u| = 2 + 3 =5$. More examples of the edge labeling can be seen in Figure \ref{fig:ELD}.

\begin{figure}
\begin{tikzpicture}[scale=.75]
\tikzstyle{state1}=[rectangle,draw=black,scale=.4]
\draw (0,-1) node[state1] (u)
 {\begin{tikzpicture}[>=stealth',bend angle=45,auto,scale=.6]
 \tikzstyle{state}=[circle,draw=black,minimum size=4mm]
 \draw (-3,-4) node[state] (1) {$\bar 4$};
 \draw (-2,-3) node[state] (2) {$\bar 3$};
 \draw (-1,-2) node[state] (3) {$\bar 2$};
 \draw (0,-1) node[state] (4) {$\bar 1$};
 \draw (0,1) node[state] (5) {$1$};
 \draw (1,2) node[state] (6) {$2$};
 \draw (2,3) node[state] (7) {$3$};
 \draw (3,4) node[state] (8) {$4$};
 \end{tikzpicture}};
 \draw (-6,4) node[state1] (u1)
 {\begin{tikzpicture}[>=stealth',bend angle=45,auto,scale=.6]
 \tikzstyle{state}=[circle,draw=black,minimum size=4mm]
 \draw (-3,-4) node[state] (1) {$\bar 4$};
 \draw (-2,-3) node[state] (2) {$\bar 3$};
 \draw (-1,-1) node[state] (3) {$\bar 1$};
 \draw (0,-2) node[state] (4) {$\bar 2$};
 \draw (0,2) node[state] (5) {$2$};
 \draw (1,1) node[state] (6) {$1$};
 \draw (2,3) node[state] (7) {$3$};
 \draw (3,4) node[state] (8) {$4$};
 \draw[line width=4] (3)--(4);
 \draw[line width=4] (5)--(6);
 \end{tikzpicture}};
 \draw (6,4) node[state1] (v1)
 {\begin{tikzpicture}[>=stealth',bend angle=45,auto,scale=.6]
 \tikzstyle{state}=[circle,draw=black,minimum size=4mm]
 \draw (-3,-4) node[state] (1) {$\bar 4$};
 \draw (-2,-3) node[state] (2) {$\bar 3$};
 \draw (-1,1) node[state] (3) {$1$};
 \draw (0,-2) node[state] (4) {$\bar 2$};
 \draw (0,2) node[state] (5) {$2$};
 \draw (1,-1) node[state] (6) {$\bar 1$};
 \draw (2,3) node[state] (7) {$3$};
 \draw (3,4) node[state] (8) {$4$};
 \draw[line width=4] (3)--(4);
 \draw[line width=4] (5)--(6);
 \end{tikzpicture}};
 \draw (0,4) node[state1] (w1)
 {\begin{tikzpicture}[>=stealth',bend angle=45,auto,scale=.6]
 \tikzstyle{state}=[circle,draw=black,minimum size=4mm]
 \draw (-3,-3) node[state] (1) {$\bar 3$};
 \draw (-2,-2) node[state] (2) {$\bar 2$};
 \draw (-1,-4) node[state] (3) {$\bar 4$};
 \draw (0,-1) node[state] (4) {$\bar 1$};
 \draw (0,1) node[state] (5) {$1$};
 \draw (1,4) node[state] (6) {$4$};
 \draw (2,2) node[state] (7) {$2$};
 \draw (3,3) node[state] (8) {$3$};
 \draw[line width=4] (2)--(3);
 \draw[line width=4] (6)--(7);
 \draw[dashed,->] (1)--(-1.65,-3);
 \draw[dashed,->] (1.5,3)--(8);
 \end{tikzpicture}};
 \draw (-10,9) node[state1] (u2)
 {\begin{tikzpicture}[>=stealth',bend angle=45,auto,scale=.6]
 \tikzstyle{state}=[circle,draw=black,minimum size=4mm]
 \draw (-3,-4) node[state] (1) {$\bar 4$};
 \draw (-2,-1) node[state] (2) {$\bar 1$};
 \draw (-1,-2) node[state] (3) {$\bar 2$};
 \draw (0,-3) node[state] (4) {$\bar 3$};
 \draw (0,3) node[state] (5) {$3$};
 \draw (1,2) node[state] (6) {$2$};
 \draw (2,1) node[state] (7) {$1$};
 \draw (3,4) node[state] (8) {$4$};
 \draw[line width=4] (2)--(3)--(4);
 \draw[line width=4] (5)--(6)--(7);
 \end{tikzpicture}};
 \draw (0,9) node[state1] (v2)
 {\begin{tikzpicture}[>=stealth',bend angle=45,auto,scale=.6]
 \tikzstyle{state}=[circle,draw=black,minimum size=4mm]
 \draw (-3,-4) node[state] (1) {$\bar 4$};
 \draw (-2,-3) node[state] (2) {$\bar 3$};
 \draw (-1,2) node[state] (3) {$2$};
 \draw (0,-1) node[state] (4) {$\bar 1$};
 \draw (0,1) node[state] (5) {$1$};
 \draw (1,-2) node[state] (6) {$\bar 2$};
 \draw (2,3) node[state] (7) {$3$};
 \draw (3,4) node[state] (8) {$4$};
 \draw[line width=4] (3)--(4)--(6);
 \draw[line width=4] (3)--(5)--(6);
 \end{tikzpicture}};
 \draw (-5,9) node[state1] (w2)
 {\begin{tikzpicture}[>=stealth',bend angle=45,auto,scale=.6]
 \tikzstyle{state}=[circle,draw=black,minimum size=4mm]
 \draw (-3,-3) node[state] (1) {$\bar 3$};
 \draw (-2,-1) node[state] (2) {$\bar 1$};
 \draw (-1,-2) node[state] (3) {$\bar 4$};
 \draw (0,-4) node[state] (4) {$\bar 2$};
 \draw (0,4) node[state] (5) {$4$};
 \draw (1,2) node[state] (6) {$2$};
 \draw (2,1) node[state] (7) {$1$};
 \draw (3,3) node[state] (8) {$3$};
 \draw[line width=4] (2)--(3)--(4);
 \draw[line width=4] (5)--(6)--(7);
 \draw[dashed,->] (1)--(-.65,-3);
 \draw[dashed,->] (.65,3)--(8);
 \end{tikzpicture}};
 \draw (10,9) node[state1] (x2)
 {\begin{tikzpicture}[>=stealth',bend angle=45,auto,scale=.6]
 \tikzstyle{state}=[circle,draw=black,minimum size=4mm]
 \draw (-3,-4) node[state] (1) {$\bar 4$};
 \draw (-2,1) node[state] (2) {$1$};
 \draw (-1,-2) node[state] (3) {$\bar 2$};
 \draw (0,-3) node[state] (4) {$\bar 3$};
 \draw (0,3) node[state] (5) {$3$};
 \draw (1,2) node[state] (6) {$2$};
 \draw (2,-1) node[state] (7) {$\bar 1$};
 \draw (3,4) node[state] (8) {$4$};
 \draw[line width=4] (2)--(3)--(4);
 \draw[line width=4] (5)--(6)--(7);
 \end{tikzpicture}};
 \draw (5,9) node[state1] (y2)
 {\begin{tikzpicture}[>=stealth',bend angle=45,auto,scale=.6]
 \tikzstyle{state}=[circle,draw=black,minimum size=4mm]
 \draw (-3,-3) node[state] (1) {$\bar 3$};
 \draw (-2,1) node[state] (2) {$1$};
 \draw (-1,-2) node[state] (3) {$\bar 2$};
 \draw (0,-4) node[state] (4) {$\bar 4$};
 \draw (0,4) node[state] (5) {$4$};
 \draw (1,2) node[state] (6) {$2$};
 \draw (2,-1) node[state] (7) {$\bar 1$};
 \draw (3,3) node[state] (8) {$3$};
 \draw[line width=4] (2)--(3)--(4);
 \draw[line width=4] (5)--(6)--(7);
 \draw[dashed,->] (1)--(-.65,-3);
 \draw[dashed,->] (.65,3) -- (8);
 \end{tikzpicture}};
 \draw (-6,14) node[state1] (u3)
 {\begin{tikzpicture}[>=stealth',bend angle=45,auto,scale=.6]
 \tikzstyle{state}=[circle,draw=black,minimum size=4mm]
 \draw (-3,-1) node[state] (1) {$\bar 1$};
 \draw (-2,-2) node[state] (2) {$\bar 2$};
 \draw (-1,-3) node[state] (3) {$\bar 3$};
 \draw (0,-4) node[state] (4) {$\bar 4$};
 \draw (0,4) node[state] (5) {$4$};
 \draw (1,3) node[state] (6) {$3$};
 \draw (2,2) node[state] (7) {$2$};
 \draw (3,1) node[state] (8) {$1$};
 \draw[line width=4] (1)--(2)--(3)--(4);
 \draw[line width=4] (5)--(6)--(7)--(8);
 \end{tikzpicture}};
 \draw (0,14) node[state1] (v3)
 {\begin{tikzpicture}[>=stealth',bend angle=45,auto,scale=.6]
 \tikzstyle{state}=[circle,draw=black,minimum size=4mm]
 \draw (-3,-4) node[state] (1) {$\bar 4$};
 \draw (-2,3) node[state] (2) {$3$};
 \draw (-1,2) node[state] (3) {$2$};
 \draw (0,-1) node[state] (4) {$\bar 1$};
 \draw (0,1) node[state] (5) {$1$};
 \draw (1,-2) node[state] (6) {$\bar 2$};
 \draw (2,-3) node[state] (7) {$\bar 3$};
 \draw (3,4) node[state] (8) {$4$};
 \draw[line width=4] (2)--(3)--(4)--(6);
 \draw[line width=4] (3)--(5)--(6)--(7);
 \end{tikzpicture}};
 \draw (6,14) node[state1] (x3)
 {\begin{tikzpicture}[>=stealth',bend angle=45,auto,scale=.6]
 \tikzstyle{state}=[circle,draw=black,minimum size=4mm]
 \draw (-3,1) node[state] (1) {$1$};
 \draw (-2,-2) node[state] (2) {$\bar 2$};
 \draw (-1,-3) node[state] (3) {$\bar 3$};
 \draw (0,-4) node[state] (4) {$\bar 4$};
 \draw (0,4) node[state] (5) {$4$};
 \draw (1,3) node[state] (6) {$3$};
 \draw (2,2) node[state] (7) {$2$};
 \draw (3,-1) node[state] (8) {$\bar 1$};
 \draw[line width=4] (1)--(2)--(3)--(4);
 \draw[line width=4] (5)--(6)--(7)--(8);
 \end{tikzpicture}};
 \draw (0,19) node[state1] (v)
 {\begin{tikzpicture}[>=stealth',bend angle=45,auto,scale=.6]
 \tikzstyle{state}=[circle,draw=black,minimum size=4mm]
 \draw (-3,4) node[state] (1) {$4$};
 \draw (-2,3) node[state] (2) {$3$};
 \draw (-1,2) node[state] (3) {$2$};
 \draw (0,1) node[state] (4) {$1$};
 \draw (0,-1) node[state] (5) {$\bar 1$};
 \draw (1,-2) node[state] (6) {$\bar 2$};
 \draw (2,-3) node[state] (7) {$\bar 3$};
 \draw (3,-4) node[state] (8) {$\bar 4$};
 \draw[line width=4] (1)--(2)--(3)--(4)--(6);
 \draw[line width=4] (3)--(5)--(6)--(7)--(8);
 \end{tikzpicture}};
\draw[thick] (u) -- node[auto=left] {2} (u1);
\draw[thick] (u) -- node[auto=left] {4} (w1);
\draw[thick] (u) -- node[auto=right] {6} (v1);
\draw[thick] (u1) -- node[auto=left] {2} (u2);
\draw[thick] (u1) -- node[pos=.25, auto=right] {4} (v2);
\draw[thick] (u1) -- node[auto=left] {3} (w2);
\draw[thick] (u2) -- node[auto=left] {2} (u3);
\draw[thick] (u2) -- node[pos=.75, auto=left] {3} (v3);
\draw[thick] (u3) -- node[auto=left] {2} (v);
\draw[thick] (v1) -- node[pos=.25, auto=left] {4} (v2);
\draw[thick] (v1) -- node[auto=right] {2} (x2);
\draw[thick] (v1) -- node[auto=right] {3} (y2);
\draw[thick] (v2) -- node[auto=right] {1} (v3);
\draw[thick] (v3) -- node[auto=right] {1} (v);
\draw[thick] (w1) -- node[pos=.25,auto=right] {2} (w2);
\draw[thick] (w1) -- node[pos=.25, auto=left] {5} (y2);
\draw[thick] (w2) -- node[pos=.75, auto=left] {2} (u3);
\draw[thick] (x2) -- node[pos=.75, auto=right] {3} (v3);
\draw[thick] (x2) -- node[auto=right] {2} (x3);
\draw[thick] (y2) -- node[pos=.75, auto=right] {2} (x3);
\draw[thick] (x3) -- node[auto=right] {2} (v);
\end{tikzpicture}
\caption{Edge labelings between some elements of $(D_4,\leq)$. (Many elements and edges are omitted.)}\label{fig:ELD}
\end{figure}
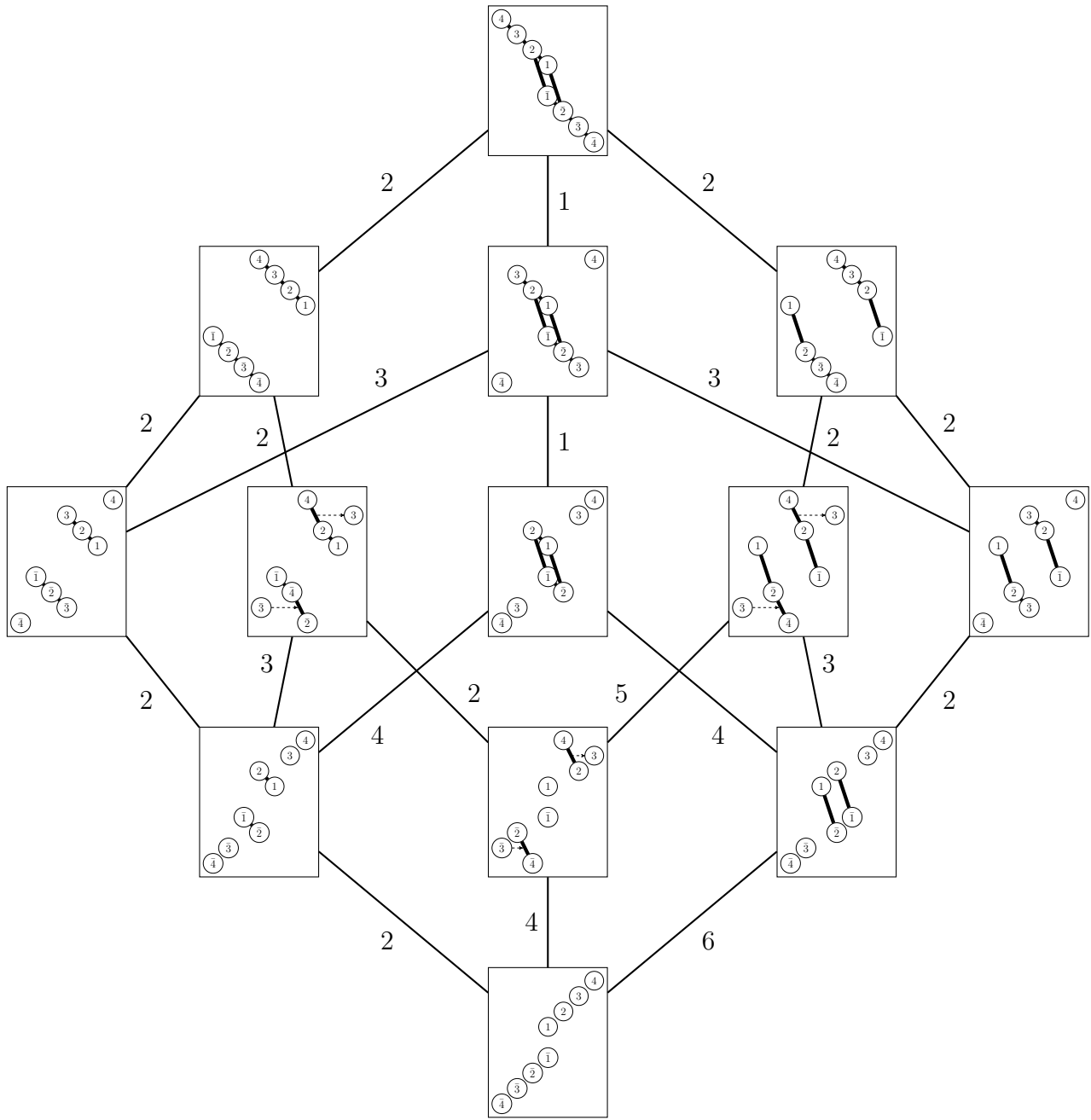

The need for this new sort of labeling stems from the fact that while the merging relation on blocks is still transitive, not every merging pair of $u$ (with respect to some $v>u$) can be merged to obtain a cover of $u$. To take a simple example, consider the interval from the identity to the long element in $(D_4,\leq)$: $u = \bar 4 | \bar 3 | \bar 2 | \begin{array}{c} 1 \\ \bar 1 \end{array} | 2 | 3 | 4$ and $v = 432\begin{array}{c} 1\\ \bar 1\end{array} \bar 2 \bar 3 \bar 4$. In $v$, the elements $1$ and $\bar 1$ are in the same block, so $d_{-1} = \{\bar 1\}$ and $d_1 = \{1\}$ are merging blocks of $u$. In the sense of type $B_n$ shard intersections, $(d_{-1},d_1)$ is the innermost pair. However, there is no shard of type $D_n$ with a block consisting solely of $1$ and $\bar 1$, so we cannot merge this pair at this time. Hence, we chose our labeling to be biased against merging positively indexed blocks with negatively indexed ones; what we are calling ``negative" merges. Note also that $(d_1,d_2)$ and $(d_{-1},d_2)$ are merging pairs of $u$. In the type $B_n$ setting, both pairs are in position 2, but in type $D_n$, the pair $(d_1,d_2)$ is in position 2, while $(d_{-1},d_2)$ is in position $2+|u| > 2$.

Roughly speaking, the lexicographically first chain merges positive pairs first, from inside out, then negative pairs, again from inside out. For example, the following is the lexicographically first maximal chain in $(D_4,\leq)$: 
\[ c: \bar 4 | \bar 3 | \bar 2 | \begin{array}{c} 1 \\ \bar 1 \end{array} | 2 | 3 | 4 \xrightarrow{2} \bar 4 | \bar 3 | \bar 1  \begin{array}{c} 2 \\ \bar 2 \end{array}  1 | 3 | 4 \xrightarrow{2} \bar 4 | \bar 1  \bar 2  \begin{array}{c} 3 \\ \bar 3 \end{array}  2  1 | 4 \xrightarrow{2} \bar 1  \bar 2  \bar 3  \begin{array}{c} 4 \\ \bar 4 \end{array}  3  2  1 \xrightarrow{2}  432\begin{array}{c} 1\\ \bar 1\end{array} \bar 2 \bar 3 \bar 4,\] where we have labeled all but the last cover with the position of the rightmost merging block. The final 2 comes from merging blocks $d_{-1}$ and $d_1$ in an element of co-rank 1, yielding the label $1+1=2$. See Figure \ref{fig:ELD} to find more examples of labeled chains of type $D_n$.

\begin{prp}\label{prp:ELD}
The labeling $\lambda_D$ is an EL-labeling for $(D_n,\leq)$.
\end{prp}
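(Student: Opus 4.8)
The plan is to follow verbatim the template used for Propositions \ref{prp:ELA} and \ref{prp:ELB}. For a fixed interval $[u,v]$ in $(D_n,\leq)$, the covers $w$ of $u$ with $u \prec w \leq v$ are exactly those obtained by merging a merging pair of $u$ (with its symmetric partner) that is actually realizable as a single cover; the merging relation on blocks remains transitive, so within any fixed position value there is at most one realizable merging pair. Combined with the fact that the label of a negative merge, $j+|u|$, is strictly larger than $|u|$ and hence larger than the label $j'\leq |u|$ of any positive merge, this yields the exact analogue of Observations \ref{obs:leftmost} and \ref{obs:leftmostB}: there is a unique cover $u_1$ of $u$ with $\lambda_D(u,u_1)$ minimal, obtained by merging the innermost positive pair if one exists and otherwise the innermost negative pair. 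Iterating defines the greedy chain $c$, and minimality of the label at each step shows $c$ is lexicographically first.

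The second step is to show $c$ is rising, which I would break into three regimes. During the \emph{positive phase} (while positive merging pairs remain), the argument is identical to type $B_n$: merging the innermost positive pair at position $j$ leaves every pair at position $<j$ intact (up to reindexing) and leaves all blocks farther from the center untouched, so the next innermost positive pair sits at position $\geq j$. At the \emph{transition} to negative merges, if the last positive label was $j_0$ then the corank afterwards is at least $j_0-1$, so the first negative label is $j'+(j_0-1)\geq j_0$ and the chain does not fall. Within the \emph{negative phase}, one checks that a negative merge at position $j+|u|$ drops the corank by exactly one while forcing the next innermost negative pair to sit at positive index at least $j+1$, so its label is at least $(j+1)+(|u|-1)=j+|u|$. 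This last verification is where the forked/zero-block bookkeeping is genuinely needed: one must track, using the cone descriptions behind Observation \ref{obs:desD}, whether a negative merge absorbs a central block $d_0$ or the forked pair $d_{\pm1}$, and how the remaining blocks are reindexed.

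For the third step, that every maximal chain of $[u,v]$ other than $c$ has a fall, I would induct on the length of the interval as before. If $w\neq u_1$ is a cover of $u$ with $u\prec w\leq v$ and $w_1$ is the minimal-label cover of $w$ in $[w,v]$, it suffices to show $\lambda_D(u,w)>\lambda_D(w,w_1)$, since then prepending $u$ to the (inductively unique) rising chain of $[w,v]$ produces a fall at $w$, and prepending $u$ to any other chain from $w$ falls inside $[w,v]$ by induction. If $w$ comes from a positive merge at position $j$ that is not innermost, then $u$'s innermost positive pair survives in $w$ at a position $\leq j-1$, so $\lambda_D(w,w_1)\leq j-1<j=\lambda_D(u,w)$. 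If $w$ comes from a negative merge then $\lambda_D(u,w)=j+|u|\geq |u|+1$ while $w$ has corank $|u|-1$; if $w$ still admits a positive merge then $\lambda_D(w,w_1)\leq|u|-1<\lambda_D(u,w)$, and otherwise $w_1$ is a negative merge whose position one must bound by $j+|u|$, which is the same structural fact needed for risingness in the negative phase.

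The main obstacle is exactly this bookkeeping around negative merges and forked central blocks: establishing that the position convention $j+|u|$ is self-consistent under taking covers (so the positive and negative phases glue into a single rising chain), that the innermost negative pair is unique in its position, and that a negative merge never creates or destroys a lower-positioned merging pair in a way that breaks the rise/fall dichotomy. Each of these is geometrically plausible from the cone descriptions, but proving them requires a careful case analysis of how the blocks of a forked permutation pre-order are reindexed when two symmetric pairs of blocks merge, separated into the cases where the merge involves $d_0$, the forked pair $d_{\pm1}$, or neither. Once these structural lemmas are established, the EL-labeling conclusion for $\lambda_D$ follows by the same formal argument as in Propositions \ref{prp:ELA} and \ref{prp:ELB}.
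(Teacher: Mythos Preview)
Your proposal follows essentially the same route as the paper's proof: establish an analogue of Observations~\ref{obs:leftmost}/\ref{obs:leftmostB} (unique minimal-label cover, with positive merges always beating negative ones), build the greedy lexicographically-first chain, verify it is rising by tracking how block indices shift under a cover, and then induct to show every other chain has a fall. Your ``positive phase / transition / negative phase'' organization is a cosmetic repackaging of the paper's cover-by-cover case split (the paper simply asks, at each step, whether the current minimal merge has $i\geq 0$ or $i<0$), and your transition bound $j'+(j_0-1)\geq j_0$ is exactly the paper's observation that a positive label $j$ is at most $|u|=|u_1|+1$, hence bounded above by any subsequent negative label.

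One point worth tightening: in the negative phase you assert the next innermost negative pair sits at index at least $j+1$ in $u_1$'s indexing, yielding label $(j+1)+(|u|-1)=j+|u|$. The paper is terse here (writing ``position $a+|u|$ with $a\geq j$''), and what actually needs to be checked is that the surviving negative pairs, which had index $\geq j+1$ in $u$'s indexing, do not all drop to index exactly $j$ after the outer blocks shift inward by one. This is precisely the ``forked/zero-block bookkeeping'' you flag as the main obstacle, and you are right that it must be done carefully; neither you nor the paper spells it out in full. Once that index bound is nailed down, the rest of your argument (including the fall analysis, where you correctly split on whether $w$ and $w_1$ come from positive or negative merges) matches the paper line for line.
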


To show the lexicographically first chain is well-defined, we need to show that for any $u < v$ in $D_n$, there is only one merging pair of $u$ (with respect to $v$) in minimal position. Suppose $(d_i,d_j)$, with $0\leq |i| < j$, is a merging pair of $u$ in minimal position. If $i \geq 0$, the uniqueness of the pair follows from transitivity as in type $B_n$. Now suppose $i < 0$, so this minimal position is $j + |u|$. Suppose $(d_k,d_j)$, is another pair in position $j+|u|$. Then $0 < -k < j$, and if $k \neq i$, transitivity of merging implies that $(d_i,d_k)$ is a merging pair in $u$ as well. But since $i$ and $k$ are both negative indices, this pair is in position $\max\{-i,-k\} < j < j+|u|$, a contradiction. This yields the following analogue of Observations \ref{obs:leftmost} and \ref{obs:leftmostB}.

\begin{obs}\label{obs:leftmostD}
Let $u \prec w \leq v$ be a triple of elements in $(D_n,\leq)$ for which $w$ is the cover of $u$ obtained by merging the pair of blocks in minimal position (with respect to $v$). Then if $w' \neq w$ is any other cover of $u$ with $u \prec  w' \leq v$, \[ \lambda_D(u,w) < \lambda_D(u,w').\]
\end{obs}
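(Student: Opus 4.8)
The idea is to deduce Observation~\ref{obs:leftmostD} from two ingredients: a dictionary between covers of $u$ lying below $v$ and the ``mergeable'' merging pairs of $u$ (with respect to $v$), together with the uniqueness of the merging pair in minimal position, which was just established in the paragraph preceding the statement. First I would record the dictionary: if $u \prec w' \leq v$, then passing from $u$ to $w'$ lowers $\dim C(\cdot)$ by exactly one, so by the description of $D_n$ shard intersections in Section~\ref{sec:poset}, $w'$ is obtained from $u$ by merging exactly one symmetric pair of blocks $(d_i,d_j)$ (together with its partner $(d_{-i},d_{-j})$, and possibly the zero block), whose union lies in a block of $v$ — that is, by merging a single merging pair of $u$. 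Merging a fixed merging pair yields a unique cover, so distinct covers $w' \neq w$ of $u$ below $v$ correspond to distinct merging pairs, and $\lambda_D(u,w')$ is by definition the position of the merging pair merged to produce $w'$.

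Granting the dictionary, the Observation is then immediate. By hypothesis $w$ is obtained by merging the merging pair of $u$ in minimal position, so $\lambda_D(u,w)$ equals that minimal position. Any other cover $w' \neq w$ below $v$ is obtained by merging a \emph{different} merging pair, and by the uniqueness proved before the statement — the casework being whether that different pair is a positive merge, where transitivity of the merging relation among blocks on a fixed side of the center forbids ties in position, or a negative merge, where transitivity among negatively indexed blocks does the same (the degenerate pair $(d_{-1},d_1)$ of zero blocks being trivially unique) — its position is \emph{strictly} larger than the minimum. Hence $\lambda_D(u,w) < \lambda_D(u,w')$.

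The step that needs genuine care, and is the one special to type $D_n$, is the covers-to-merging-pairs dictionary, because, as noted before the statement, not every merging pair of $u$ is mergeable: the only obstruction is a merge of the two singleton zero blocks $d_1 = \{w(1)\}$ and $d_{-1} = \{\overline{w(1)}\}$, since no type $D_n$ shard has zero block exactly $\{w(1),\overline{w(1)}\}$. So I must check (i) that every cover of $u$ below $v$ still comes from a \emph{mergeable} merging pair (again a dimension count), and (ii) that the merging pair in minimal position is always mergeable, so the hypothesis ``$w$ is the cover obtained by merging the pair in minimal position'' is never vacuous. Part (ii) is exactly why the definition of $\lambda_D$ penalizes negative merges by the summand $|u|$: the offending $d_{-1},d_1$ merge is a negative merge, hence sits in position $1+|u|$, whereas every positive merging pair sits in position at most $|u|$; and whenever that offending pair is available, one produces a positive merging pair by tracking where the extra elements of the relevant block of $v$ (which must be strictly larger than $\{w(1),\overline{w(1)}\}$ for $v$ to be a genuine $D_n$ shard intersection) sit in $u$. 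I would isolate (i)--(ii) as a short lemma; after that, the deduction of the Observation is the two-line argument above, and I expect (ii) to be the main obstacle.
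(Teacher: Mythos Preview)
Your proposal is correct and follows essentially the same route as the paper: the Observation is deduced from the uniqueness of the merging pair in minimal position, which is proved by the transitivity casework (positive merge vs.\ negative merge) in the paragraph immediately preceding the statement. Your additional care about the covers-to-merging-pairs dictionary and about mergeability of the minimal-position pair (your points (i)--(ii)) makes explicit something the paper leaves implicit, but the core argument is identical.
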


Observation \ref{obs:leftmostD} implies there is a unique lexicographically first maximal chain in any interval $[u,v]$ of $(D_n,\leq)$. Call this chain: \[ c: u=u_0 \prec u_1 \prec  \cdots \prec  u_r = v. \] To see that this is the only rising chain, we make the same simple observation about the positions of blocks in a cover $u\prec w$. Namely, if $(d_i,d_j)$ is a merging pair with $0\leq |i| < j$, then blocks farther from center than $d_j$ stay in place. Letting $u = d_{-k}|\cdots |d_k$ and $w = e_{-k+1}|\cdots |e_{k-1}$, we have: 
\[
e_a = \begin{cases} d_{a+1} &\mbox{if } a \geq j, \\
				   d_{a-1} &\mbox{if } a \leq -j.				 
				   \end{cases}
\]
Further, all but the newly formed blocks between $d_{-j-1}$ and $d_{j+1}$ are blocks of both $u$ and $w$.

Now suppose $w=u_1$ is the cover of $u$ obtained by merging the pair in the smallest position. There are two cases to consider: either $i \geq 0$ or $i < 0$. 

Suppose first $i \geq 0$, so that the merge from $u$ to $u_1$ is positive. If the smallest merge of $u_1$ is also positive, it must be in a position $a \geq j$ as with types $A_{n-1}$ and $B_n$. If smallest merge of $u_1$ (with respect to $v$) is negative, its position is at least $1+|u_1|$. Thus we have $\lambda_D(u,u_1) = j \leq |u| = |u_1|+1 \leq \lambda_D(u_1,u_2)$. 

Now suppose that $i < 0$, so that the merge is negative. Then the position of $(d_i, d_j)$ is $j + |u|$ and all merging pairs of $u_1$ must also be negative (else, there was a positive merge possible in $u$, contradicting minimality). Again, because $j+|u|$ was minimal in $u$, the smallest position of a merging pair of $u_1$ must then be in a position $a + |u|$, with $a \geq j$. Hence, $\lambda_D(u,u_1) \leq \lambda_D(u_1,u_2)$ in this case as well.

The same logic applies to every cover $u_{i-1} \prec  u_i$, so this shows that the chain $c$ is a rising chain:
\[ \lambda_D(u,u_1) \leq \lambda_D(u_1,u_2) \leq \cdots \leq \lambda_D(u_{r-1},v).\]

To see that any other maximal chain in $[u,v]$ has a fall, we argue once more by induction on the length of the interval. Suppose $w\neq u_1$ is a cover of $u$. By induction, we suppose the lexicographically first chain, $c': w=w_0 \prec  w_1 \prec  \cdots \prec  w_{r-1} = v$, is the only rising chain in $[w,v]$. As before, it will suffice to show that $\lambda_D(u,w) > \lambda_D(w,w_1)$.

There are two cases to consider, depending on whether the merge from $u$ to $w$ is positive or negative. If the merge is positive, then, as was the case with type $B_n$, the smallest merging pair of $w$ is in a position $b \leq j-1$, so $\lambda_D(u,w) = j > b = \lambda_D(w,w_1)$.

If the merge from $u$ to $w$ is negative, then $\lambda_D(u,w) = j + |u|$. If the merge from $w$ to $w_1$ is positive, its position is at most $|w| = |u|-1 < j+|u|$ and we are done. If the merge from $w$ to $w_1$ is negative as well, its position is $b+|w|$ with $b \leq j-1$. Hence $\lambda_D(w,w_1) = b+|w| < j + |w| = j+ |u|-1 < \lambda_D(u,w)$.

This shows $\lambda_D(u,w) > \lambda_D(w,w_1)$ in all cases, and thus every saturated chain $u \prec w \prec \cdots \prec v$ must have a fall.

This proves Proposition \ref{prp:ELD}, and together with Propositions \ref{prp:ELA} and \ref{prp:ELB}, establishes Theorem \ref{thm:EL}.

We now transition from EL-labelings to a different property for graded posets.

\section{Symmetric boolean decomposition}\label{sec:sbd}

Suppose $(P,\leq)$ is a graded poset of rank $n$, and let $\B_j$ (in Roman font to distinguish it from the hyperoctahedral group $B_j$) denote the boolean algebra on $j$ elements. 

\begin{defn}[Symmetric boolean decomposition]\label{def:sbd}
Suppose $\{ P_1,\ldots,P_k\}$ is a partition of $(P,\leq)$, i.e., $P_i\cap P_j = \emptyset$, $\cup_i P_i = P$. Moreover, for each $i = 1,\ldots,k$ suppose there is a number $j$, $0\leq j\leq n/2$, and a bijective map $\rho_i : \B_{n-2j} \to P_i$ that takes cover relations to cover relations and sends elements of rank $r$ in $\B_{n-2j}$ to elements of rank $j+r$ in $P$.

Then we say the collection of images of the maps $\rho_1,\ldots, \rho_k$ provides a \emph{symmetric boolean decomposition} of $(P,\leq)$.
\end{defn}

That is, each induced poset $(P_i,\leq)$ is a saturated subposet in $(P,\leq)$ (i.e., all its cover relations are covers in $P$) with $2^{n-2j}$ elements (for some $j$) and containing a copy of $\B_{n-2j}$, plus possibly some more relations.

For example, in Figure \ref{fig:SBD}, posets (a) and (b) have a symmetric boolean decomposition (in bold), while (c) and (d) do not. Note that $\B_3$ is a proper subposet of the corresponding induced subposet in (a), and that that poset (c) differs from (b) only in one cover relation.

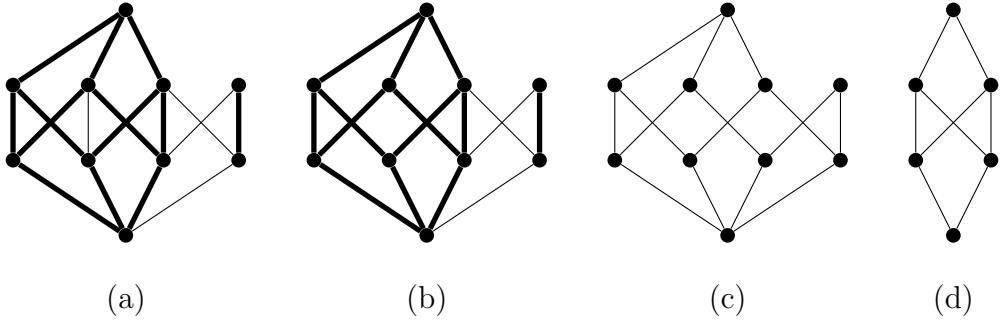
\begin{figure}
\[
\begin{tikzpicture}
\draw (-4,0) node
 {\begin{tikzpicture}[scale=.5]
 \tikzstyle{tight}=[circle,fill=black,scale=.5]
 \draw (0,0) node[tight] (0) {};
 \draw (-3,2) node[tight] (a1) {};
 \draw (-1,2) node[tight] (a2) {};
 \draw (1,2) node[tight] (a3) {};
 \draw (3,2) node[tight] (a4) {};
 \draw (-3,4) node[tight] (b1) {};
 \draw (-1,4) node[tight] (b2) {};
 \draw (1,4) node[tight] (b3) {};
 \draw (3,4) node[tight] (b4) {};
 \draw (0,6) node[tight] (1) {};
 \draw [line width=2] (0)--(a1);
 \draw [line width=2] (0)--(a2);
 \draw [line width=2] (0)--(a3);
 \draw (0)--(a4);
 \draw [line width=2] (1)--(b1);
 \draw [line width=2] (1)--(b2);
 \draw [line width=2] (1)--(b3);
 \draw [line width=2] (b1)--(a1);
 \draw [line width=2] (b2)--(a1);
 \draw (b2)--(a2);
 \draw [line width=2] (b1)--(a2);
 \draw [line width=2] (b3)--(a2);
 \draw [line width=2] (b2)--(a3);
 \draw [line width=2] (b3)--(a3);
 \draw (b4)--(a3);
 \draw (b3)--(a4);
 \draw [line width=2] (b4)--(a4);
 \end{tikzpicture}};
\draw (-4,-2) node[below] {(a)};
\draw (0,0) node
 {\begin{tikzpicture}[scale=.5]
 \tikzstyle{tight}=[circle,fill=black,scale=.5]
 \draw (0,0) node[tight] (0) {};
 \draw (-3,2) node[tight] (a1) {};
 \draw (-1,2) node[tight] (a2) {};
 \draw (1,2) node[tight] (a3) {};
 \draw (3,2) node[tight] (a4) {};
 \draw (-3,4) node[tight] (b1) {};
 \draw (-1,4) node[tight] (b2) {};
 \draw (1,4) node[tight] (b3) {};
 \draw (3,4) node[tight] (b4) {};
 \draw (0,6) node[tight] (1) {};
 \draw [line width=2] (0)--(a1);
 \draw [line width=2] (0)--(a2);
 \draw [line width=2] (0)--(a3);
 \draw (0)--(a4);
 \draw [line width=2] (1)--(b1);
 \draw [line width=2] (1)--(b2);
 \draw [line width=2] (1)--(b3);
 \draw [line width=2] (b1)--(a1);
 \draw [line width=2] (b2)--(a1);
 \draw [line width=2] (b1)--(a2);
 \draw [line width=2] (b3)--(a2);
 \draw [line width=2] (b2)--(a3);
 \draw [line width=2] (b3)--(a3);
 \draw (b4)--(a3);
 \draw (b3)--(a4);
 \draw [line width=2] (b4)--(a4);
 \end{tikzpicture}};
\draw (0,-2) node[below] {(b)};
\draw (4,0) node
 {\begin{tikzpicture}[scale=.5]
 \tikzstyle{tight}=[circle,fill=black,scale=.5]
 \draw (0,0) node[tight] (0) {};
 \draw (-3,2) node[tight] (a1) {};
 \draw (-1,2) node[tight] (a2) {};
 \draw (1,2) node[tight] (a3) {};
 \draw (3,2) node[tight] (a4) {};
 \draw (-3,4) node[tight] (b1) {};
 \draw (-1,4) node[tight] (b2) {};
 \draw (1,4) node[tight] (b3) {};
 \draw (3,4) node[tight] (b4) {};
 \draw (0,6) node[tight] (1) {};
 \draw (0)--(a1);
 \draw (0)--(a2);
 \draw (0)--(a3);
 \draw (0)--(a4);
 \draw (1)--(b1);
 \draw (1)--(b2);
 \draw (1)--(b3);
 \draw (b1)--(a1);
 \draw (b2)--(a1);
 \draw (b1)--(a2);
 \draw (b3)--(a2);
 \draw (b2)--(a3);
 \draw (b4)--(a3);
 \draw (b3)--(a4);
 \draw (b4)--(a4);
 \end{tikzpicture}};
\draw (4,-2) node[below] {(c)};
\draw (7,0) node
 {\begin{tikzpicture}[scale=.5]
 \tikzstyle{tight}=[circle,fill=black,scale=.5]
 \draw (0,0) node[tight] (0) {};
 \draw (-1,2) node[tight] (a1) {};
 \draw (1,2) node[tight] (a2) {};
 \draw (-1,4) node[tight] (b1) {};
 \draw (1,4) node[tight] (b2) {};
 \draw (0,6) node[tight] (1) {};
 \draw (0)--(a1);
 \draw (0)--(a2);
 \draw (1)--(b1);
 \draw (1)--(b2);
 \draw (b1)--(a1);
 \draw (b2)--(a1);
 \draw (b1)--(a2);
 \draw (b2)--(a2);
 \end{tikzpicture}};
\draw (7,-2) node[below] {(d)};
\end{tikzpicture}
\]
\caption{Posets with and without symmetric boolean decompositions.}
\label{fig:SBD}
\end{figure}

It is well known that a boolean algebra admits a symmetric chain decomposition (see, e.g., \cite{GK}), hence the following observation.

\begin{obs}\label{obs:scd}
If $(P,\leq)$ admits a symmetric boolean decomposition, then it admits a symmetric chain decomposition.
\end{obs}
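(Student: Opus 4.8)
The plan is to reduce the statement directly to the known symmetric chain decomposition of a boolean algebra. Recall that a \emph{symmetric chain} in a graded poset of rank $n$ is a saturated chain whose bottom and top elements have ranks $a$ and $n-a$ for some $a$, and that a \emph{symmetric chain decomposition} partitions the poset into symmetric chains. The fact we may invoke (see, e.g., \cite{GK}) is that $\B_m$ admits a symmetric chain decomposition for every $m\ge 0$. So given a symmetric boolean decomposition $\{\rho_i:\B_{n-2j_i}\to P_i\}_{i=1}^k$ of $(P,\le)$ (where $j_i$ is the shift associated to the $i$-th piece, as in Definition \ref{def:sbd}), the idea is simply to transport a symmetric chain decomposition of each $\B_{n-2j_i}$ forward along $\rho_i$.

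Concretely, I would first fix, for each $i$, a symmetric chain decomposition $\mathcal{C}_i$ of $\B_{n-2j_i}$; a chain $C\in\mathcal{C}_i$ runs from some rank $a$ to rank $(n-2j_i)-a$ inside $\B_{n-2j_i}$. Then I claim $\{\rho_i(C):C\in\mathcal{C}_i,\ 1\le i\le k\}$ is a symmetric chain decomposition of $P$, which needs three checks. First, $\rho_i(C)$ is a saturated chain in $P$: since $\rho_i$ carries cover relations to cover relations and $C$ is saturated, consecutive elements of $\rho_i(C)$ are covers in $P$. Second, $\rho_i(C)$ is symmetric: since $\rho_i$ sends rank $r$ to rank $j_i+r$, the chain $\rho_i(C)$ runs from rank $j_i+a$ to rank $j_i+\bigl((n-2j_i)-a\bigr)=n-j_i-a$ in $P$, and $(j_i+a)+(n-j_i-a)=n$, so its endpoints are symmetric about $n/2$. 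Third, these chains partition $P$: the $P_i$ partition $P$, each $\rho_i$ is a bijection onto $P_i$, and the chains of $\mathcal{C}_i$ partition $\B_{n-2j_i}$, so their images partition $P_i$.

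There is essentially no obstacle; the only point requiring care is the rank bookkeeping in the second check, and that is precisely where the word ``symmetric'' in ``symmetric boolean decomposition'' does its work — namely, that $\rho_i$ carries the middle rank $(n-2j_i)/2$ of $\B_{n-2j_i}$ to the middle rank $n/2$ of $P$, so that symmetry of a chain about the center of the boolean algebra is preserved as symmetry about the center of $P$.
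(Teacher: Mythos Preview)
Your proof is correct and follows exactly the approach the paper intends: the paper simply remarks that ``a boolean algebra admits a symmetric chain decomposition (see, e.g., \cite{GK}), hence the following observation,'' leaving the details implicit. Your write-up fills in precisely those details---transporting a symmetric chain decomposition of each $\B_{n-2j_i}$ along $\rho_i$ and checking saturation, symmetry, and partition---and nothing more is needed.
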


Beyond having a symmetric chain decomposition, and the obvious rank symmetry, there must be ``enough" vertices of each rank for a given poset to admit a symmetric boolean decomposition. (All the posets in Figure \ref{fig:SBD} have symmetric chain decompositions.) To begin, if a rank $n$ poset has a unique maximum and minimum, there must be at least $\binom{n}{k}$ elements of rank $k$. But if there are more than $n$ elements of rank 1, there must be at least $\binom{n}{k} + \binom{n-2}{k-1}$ elements of rank $k\geq 2$, and so on. The notion of \emph{$\gamma$-nonnegativity} encapsulates this necessary numeric condition.

\begin{obs}\label{obs:gam}
Let $(P,\leq)$ be a graded poset of rank $n$. If $(P,\leq)$ admits a symmetric boolean decomposition, then there exist nonnegative integers $\gamma_j$ such that \[ \sum_{p \in P} t^{\rk(p)} = \sum_{0\leq j\leq n/2} \gamma_j t^j (1+t)^{n-2j}.\] In fact, $\gamma_j$ is the number of subsets $P_i$ in the symmetric boolean decomposition for which $|P_i| = 2^{n-2j}$.
\end{obs}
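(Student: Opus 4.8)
The plan is to exploit the defining maps $\rho_i$ from Definition \ref{def:sbd} and compute the rank generating function of $(P,\le)$ one block at a time. The only elementary input needed is that the boolean algebra $\B_m$ has exactly $\binom{m}{r}$ elements of rank $r$, so that its rank generating function is $\sum_{r} \binom{m}{r} t^r = (1+t)^m$.

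First I would fix a block $P_i$ in the decomposition, with associated integer $j_i$ (where $0 \le j_i \le n/2$) and bijection $\rho_i : \B_{n-2j_i} \to P_i$. Since $\rho_i$ carries each rank-$r$ element of $\B_{n-2j_i}$ to a rank-$(j_i+r)$ element of $P$ and is a bijection onto $P_i$, the number of elements of $P_i$ of rank $j_i+r$ is $\binom{n-2j_i}{r}$, and every element of $P_i$ has rank between $j_i$ and $n-j_i$. Hence
\[ \sum_{p \in P_i} t^{\rk(p)} = t^{j_i} \sum_{r=0}^{n-2j_i} \binom{n-2j_i}{r} t^r = t^{j_i}(1+t)^{n-2j_i}. \]

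Next, because $\{P_1,\ldots,P_k\}$ is a partition of $P$, I would sum the previous identity over $i$ to get
\[ \sum_{p \in P} t^{\rk(p)} = \sum_{i=1}^{k} t^{j_i}(1+t)^{n-2j_i}. \]
It then remains only to collect terms according to the value of $j_i$: setting $\gamma_j = \#\{\, i : j_i = j \,\}$ for $0 \le j \le n/2$ yields the claimed expansion, and these are manifestly nonnegative integers. Finally, since $|P_i| = |\B_{n-2j_i}| = 2^{n-2j_i}$ and the map $j \mapsto 2^{n-2j}$ is injective on $\{0,1,\ldots,\lfloor n/2\rfloor\}$, the condition $j_i = j$ is equivalent to $|P_i| = 2^{n-2j}$; thus $\gamma_j$ counts precisely the blocks of size $2^{n-2j}$, as asserted.

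There is no genuine obstacle here: the statement is an immediate bookkeeping consequence of the definition of a symmetric boolean decomposition. The one point worth making explicit is that the size of a block determines its shift $j_i$ unambiguously, which is why the integers $\gamma_j$ can be described intrinsically in terms of block sizes rather than in terms of the chosen maps $\rho_i$.
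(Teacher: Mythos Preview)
Your proposal is correct and is exactly the straightforward bookkeeping argument the paper intends; the paper states this as an observation without supplying a proof, and your computation of the rank generating function block by block, together with the remark that $|P_i|=2^{n-2j_i}$ determines $j_i$, is precisely what is implicit in the statement.
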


Note that the set $\{ t^j (1+t)^{n-2j}\}_{0\leq j \leq n/2}$ is a basis for polynomials of degree $n$ with symmetric coefficients. Hence for any such polynomial the coefficients $\gamma_j$ as above are uniquely defined. 

Also note that while having a symmetric chain decomposition implies that the rank generating function for $(P,\leq)$ is symmetric and unimodal, $\gamma$-nonnegativity is a stronger condition. This property has been of some interest since Gal conjectured that the $h$-polynomials of flag spheres are $\gamma$-nonnegative \cite{Gal}. See also \cite{Br, NP, NPT, PRW, Stem}.

In the case of the lattice of noncrossing partitions $NC(n) \cong NC(A_{n-1})$, its rank generating function is known to be the $h$-polynomial of the \emph{associahedron}. As Postnikov, Reiner, and Williams observe, $\gamma$-nonnegativity of its $h$-polynomial follows from Simion and Ullman's symmetric boolean decomposition of $NC(n)$. See \cite[Proposition 11.14]{PRW}, Sections 2 and 3 of \cite{SU}, and \cite{BP}. The rank generating function of $NC(n)$ has the following combinatorial interpretation, as follows from Observation \ref{obs:rankA} and Proposition \ref{prp:sub}: \[ \sum_{\pi \in NC(n)} t^{\rk(\pi)} = \sum_{w \in S_n(231)} t^{d(w)}.\] 

Similarly, Observation \ref{obs:rankA} implies the rank generating function of the shard intersection order of the symmetric group $S_n$ is the \emph{Eulerian polynomial} \[A_{n-1}(t)=\sum_{w\in S_n} t^{d(w)},\] which is known to be the $h$-polynomial of the \emph{permutahedron}. Foata and Sch\"utzenberger were the first to show that the Eulerian polynomials are $\gamma$-nonnegative \cite[Th\'eor\`eme 5.6]{FS}. Their result was given a beautiful combinatorial proof by Foata and Strehl in \cite{FSt}. The key idea is an action we call ``valley-hopping" that we will describe in Section \ref{sec:proof}. Valley-hopping and analogous actions have been used in several papers, e.g., \cite{Br, PRW, SWG}. We will use valley-hopping to prove Theorem \ref{thm:SBD}, that the shard intersection order $(S_n,\leq)$ admits a symmetric boolean decomposition which restricts to a symmetric boolean decomposition for $(S_n(231),\leq) \cong (NC(n),\leq)$.

\section{Valley-hopping and type $A_{n-1}$ shard intersections}\label{sec:proof}

We now return to the case of the symmetric group and the proof of Theorem \ref{thm:SBD}.

A wonderful combinatorial proof of the $\gamma$-nonnegativity of the Eulerian polynomial is given by Foata and Strehl's action of ``valley-hopping" as illustrated in Figure \ref{fig:hop}. (There is a similar notion due to Shapiro, Woan, and Getu \cite{SWG}. See also \cite[Section 11]{PRW} and \cite{Br}.) Here we draw a permutation as a ``mountain range", so that peaks and valleys form the upper and lower limits of the decreasing runs. By convention, we have points at infinity on the far left and far right.

\begin{figure}
\[
\begin{tikzpicture}[>=stealth',bend angle=45,auto,xscale=.4,yscale=.5]
\tikzstyle{state}=[circle,draw=black,minimum size=4mm]
\draw (1,11) node[state] (1) {$\infty$};
\draw (4,8) node[state] (2) {$8$};
\draw (6,6) node[state] (3) {$6$};
\draw (10,2) node[state] (4) {$2$};
\draw (15,7) node[state] (5) {$7$};
\draw (18,4) node[state] (6) {$4$};
\draw (21,1) node[state] (7) {$1$};
\draw (23,3) node[state] (8) {$3$};
\draw (25,5) node[state] (9) {$5$};
\draw (29,9) node[state] (10) {$9$};
\draw (31,11) node[state] (11) {$\infty$};
\draw[line width=4] (1)--(2);
\draw[line width=4] (2)--(3);
\draw[line width=4] (3)--(4);
\draw[line width=4] (5)--(6);
\draw[line width=4] (6)--(7);
\draw[dotted] (4)--(5);
\draw[dotted] (7)--(8);
\draw[dotted] (8)--(9);
\draw[dotted] (9)--(10);
\draw[dotted] (10)--(11);
\draw[dashed, <->] (3,9) to [out=15, in=165] (10);
\draw[dashed, <->] (2) to [out=10, in=170] (28,8);
\draw[dashed, <->] (3) to [out=15, in=165] (14,6);
\draw[dashed, <->] (17,5) to [out=15, in=165] (9);
\draw[dashed, <->] (6) to [out=15, in=165] (24,4);
\draw[dashed, <->] (19,3) to [out=10, in=170] (8);
\end{tikzpicture}
\]
\caption{The mountain range view of the permutation $w=862741359$.}\label{fig:hop}
\end{figure}
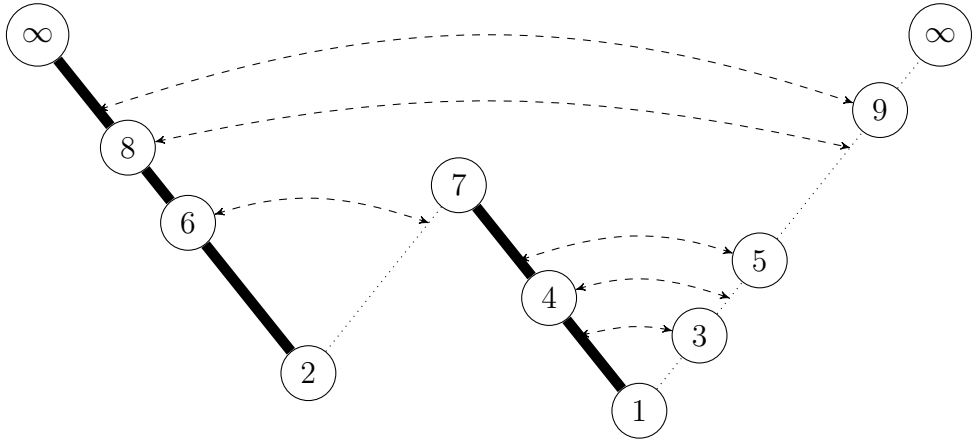

Formally, given $w = w(1)\cdots w(n) \in S_n$, we say a letter $w(i)$ is a \emph{peak} if $w(i-1) < w(i) > w(i+1)$ and it is a \emph{valley} if $w(i-1)> w(i) < w(i+1)$. Otherwise we say $w(i)$ is \emph{free}. Using the convention that $w(0) = w(n+1) = \infty$, we see that $w$ cannot begin or end with a peak.

We partition $S_n$ into equivalence classes according to the following action on free letters. Denote the set of free letters of $w$ as \[F(w) = \{ 1\leq w(i) = j\leq n : w(i-1) < w(i) < w(i+1) \mbox{ or } w(i-1) > w(i) > w(i+1) \}.\] If $w(i) = j$ is free, then $H_j(w)$ denotes the permutation obtained by moving $j$ directly across the adjacent valley(s) to the nearest mountain slope of the same height. More precisely, we have the following.

\begin{defn}[Valley hopping]
Let $w \in S_n$, and let $w(i)=j$ be a free letter of $w$. Define the operator $H_j(w)$ as follows:
\begin{itemize}
\item if $w(i)=j$ lies on a downslope, $w(i-1) > w(i) > w(i+1)$, we find the smallest $k>i$ such that $w(k) < j < w(k+1)$, and \[ H_j(w) = w(1)\cdots w(i-1)w(i+1)\cdots w(k)\, j\, w(k+1) \cdots w(n),\]
\item if $w(i)=j$ lies on an upslope, $w(i-1) < w(i) < w(i+1)$, we find the largest $k<i$ such that $w(k-1) > j > w(k)$, and \[ H_j(w) = w(1)\cdots w(k-1)\, j\, w(k) \cdots w(i-1)w(i+1)\cdots w(n).\]
\end{itemize}
\end{defn}

Clearly, if $j, l\in F(w)$,  $H_j^2(w) = w$ and  $H_j (H_l(w)) = H_l(H_j(w))$. Thus, for any $J = \{j_1,\ldots,j_k\} \subseteq F(w)$, we can define the operation $H_J(w) = H_{j_1}\cdots H_{j_k}(w)$. Also, observe that $F(H_J(w)) = F(w)$, i.e., $H_J(w)$ has the same set of free letters as $w$.

Define a relation $v \sim w$ if there is a sequence of hops on free letters that transforms $w$ into $v$. It is easy to see that $\sim$ is an equivalence relation. Let $P_w$ denote the hop-equivalence class of $w$. If $w$ has $r$ peaks, it has $r+1$ valleys, and hence $n-1-2r$ free letters. Therefore we see that $|P_w| = 2^{n-1-2r}$. 

For each such class, there is a unique element with the minimal number of descents, $r$, corresponding to having each free letter lie on an upslope. Let $\widehat{S_n}$ denote the set of these descent-minimal representatives: \[ \widehat{S_n} = \{ w \in S_n : w(1) < w(2) \mbox{ and if } w(i-1) > w(i), \mbox{ then } w(i) < w(i+1) \}.\] These are the permutations for which every descent is also a peak. 

It is easy to see (and this is the key to Foata and Strehl's proof of $\gamma$-nonnegativity of the Eulerian polynomial) that if $j$ is on an upslope it is \emph{not} in a descent position, while if it is on a downslope it \emph{is} in a descent position. Hence the following observation.

\begin{obs}\label{obs:rankJ}
Let $w \in \widehat{S_n}$ have $r$ descents and $J\subseteq F(w)$. Then $H_J(w)$ has $r+|J|$ descents. In particular, if $w$ has rank $r$ in $(S_n,\leq)$ then $H_J(w)$ has rank $r+|J|$ in $(S_n,\leq)$.
\end{obs}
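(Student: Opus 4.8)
The plan is to prove the descent statement by induction on $|J|$ and then invoke Observation \ref{obs:rankA} to obtain the rank statement. First I would isolate the structural fact that does all the work, which is essentially already implicit in the preceding paragraph (in the assertions $F(H_J(w)) = F(w)$, $H_jH_l = H_lH_j$, and $|P_w| = 2^{n-1-2r}$): hopping a single free letter $l$ of a permutation $u$ changes the immediate neighbours of no letter except $l$ itself and the letters at the valley across which $l$ moves, so it preserves the set of peaks, the set of valleys, and the upslope/downslope status of every free letter other than $l$; and, by the definition of $H_l$, the letter $l$ itself is moved from an upslope to a downslope or vice versa. I would justify this by a direct inspection of the two cases in the definition of valley-hopping: in the upslope case $l = u(i)$ is deleted from between $u(i-1) < u(i) < u(i+1)$, which leaves $u(i-1) < u(i+1)$ so no turn is created or destroyed there, and is reinserted between the consecutive values $u(k-1) > l > u(k)$, which again creates no turn and places $l$ on a downslope; the downslope case is symmetric, with the endpoint convention $w(0) = w(n+1) = \infty$ handled as usual.

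Granting this, here is the induction. By the remark quoted in the text (a free letter is in a descent position exactly when it lies on a downslope), together with the observation that peaks always start descents and valleys never do, every $u \in P_w$ satisfies $d(u) = (\text{number of peaks of } u) + (\text{number of free letters of } u \text{ on downslopes})$. Since $w \in \widehat{S_n}$ has every descent equal to a peak, none of its free letters lie on a downslope, so its $r$ descents are precisely its peaks. For the inductive step write $J = J' \sqcup \{j\}$ with $j \notin J'$, set $u = H_{J'}(w)$, and note $H_J(w) = H_j(u)$. By the structural fact the hops comprising $H_{J'}$ — none of which is $H_j$ — leave $j$ on an upslope, just as in $w$, while by the inductive hypothesis $d(u) = r + |J'|$. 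Applying $H_j$ then moves $j$ to a downslope and alters nothing else among the peaks, valleys, and slopes of the other free letters, so exactly one new descent appears: $d(H_J(w)) = d(u) + 1 = r + |J|$. The base case $J = \emptyset$ is just $d(w) = r$.

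For the final clause, Observation \ref{obs:rankA} says the rank of a permutation in $(S_n,\leq)$ equals its descent number, so if $w$ has rank $r$ then $\rk(H_J(w)) = d(H_J(w)) = r + |J|$.

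I expect the only genuine obstacle to be the bookkeeping in the structural fact: checking carefully that deleting $j$ from its position and reinserting it across the adjacent valley disturbs neither the multiset of peaks and valleys nor the up/down side of any other free letter. This is routine once the cases in the definition of $H_j$ are written out with the $\infty$-convention in force, and everything downstream is a short induction.
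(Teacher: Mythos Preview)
Your proposal is correct and follows essentially the same approach as the paper: the paper treats this as an ``Observation'' whose entire justification is the sentence immediately preceding it (a free letter on an upslope is not a descent, a free letter on a downslope is a descent), together with the already-noted facts that $F(H_J(w)) = F(w)$ and that hops commute. You have simply made explicit the structural fact (that hopping preserves peaks, valleys, and the slope-side of every other free letter) and packaged the count as an induction on $|J|$, which is more detail than the paper gives but not a different idea.
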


We claim that the collection of all hop-equivalence classes of elements in $\widehat{S_n}$, \[ \mathcal{P} = \{ P_w : w \in \widehat{S_n}\},\] gives a symmetric boolean decomposition of $(S_n,\leq)$.

\begin{lem}[Partitions $S_n$]\label{lem:partition}
The collection $\mathcal{P}$ is a partition of $S_n$.
\end{lem}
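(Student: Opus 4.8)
The plan is to show that the hop-equivalence classes $P_w$ with $w \in \widehat{S_n}$ genuinely partition $S_n$, which amounts to two things: every permutation lies in some such class, and no permutation lies in two of them. First I would observe that every hop-equivalence class $P$ contains at least one element of $\widehat{S_n}$: starting from any $v \in P$, repeatedly apply $H_j$ to each free letter $j$ that currently lies on a downslope, moving it to an upslope. Since $\widehat{S_n}$ is exactly the set of permutations in which every free letter sits on an upslope (equivalently, every descent is a peak), and since by Observation~\ref{obs:rankJ} each such hop strictly decreases the descent number, this process terminates — necessarily at an element of $\widehat{S_n}$. Hence $\bigcup_{w \in \widehat{S_n}} P_w = S_n$, which gives the covering property.

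For disjointness, the key point is that each hop-equivalence class contains a \emph{unique} element of $\widehat{S_n}$; once that is established, the classes $P_w$ for distinct $w \in \widehat{S_n}$ are automatically distinct (they are the equivalence classes of a single equivalence relation $\sim$, so two of them are either equal or disjoint, and equal classes would force their $\widehat{S_n}$-representatives to coincide). To prove uniqueness I would argue that the descent-minimal element of a class is forced: the peaks and valleys of a permutation are preserved by every hop (hopping a free letter $j$ across a valley changes neither the set of peaks nor the set of valleys, since $j$ is re-inserted between two letters one of which is smaller and one larger, and the displaced valley/peak structure of the remaining letters is unchanged), so all elements of $P_w$ share the same peak set, the same valley set, and the same free-letter set $F(w)$. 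Within a fixed such "skeleton," a permutation is determined by, for each free letter $j$, whether $j$ lies on an upslope or a downslope — and the element of $\widehat{S_n}$ is the one where every free letter is on an upslope. Since hopping $H_j$ toggles exactly the upslope/downslope status of $j$ and fixes that of every other free letter (using $F(H_J(w)) = F(w)$ and the commutativity $H_j H_l = H_l H_j$ noted before the lemma), there is exactly one way to reach the all-upslope configuration, so the $\widehat{S_n}$-representative is unique. This also shows, as a byproduct, that $P_w = \{ H_J(w) : J \subseteq F(w)\}$ with the $2^{|F(w)|}$ subsets giving $2^{|F(w)|}$ distinct permutations, matching the count $|P_w| = 2^{n-1-2r}$.

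The main obstacle is making precise the claim that hopping preserves the peak/valley skeleton and toggles only the status of the hopped letter — the insertion and deletion in the definition of $H_j$ shift indices, so one must check carefully that the letters adjacent to the old and new positions of $j$ still have the same local comparisons among themselves, and in particular that no new peak or valley is created and none destroyed. This is a finite local case-check (the letter $j$ leaves a spot between two neighbors one smaller and one larger, and lands in an analogous spot), but it is the crux; once it is in hand, the partition statement follows formally. I would also remark that this local analysis is exactly what is needed later to see that $\rho_w : \B_{n-1-2r} \to P_w$, $J \mapsto H_J(w)$, is the rank-shifting boolean embedding required by Definition~\ref{def:sbd}, so it is worth stating it cleanly as its own sub-lemma.
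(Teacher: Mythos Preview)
Your proposal is correct and follows essentially the same approach as the paper: both arguments show that every hop-equivalence class has a unique representative in $\widehat{S_n}$, namely the permutation with every free letter on an upslope. The paper's version is more concise---rather than iterating hops and invoking termination via descent count, it directly takes $J$ to be the full set of downslope free letters and applies $H_J$ once, leaning on the already-stated fact $F(H_J(w)) = F(w)$ (which is exactly the ``skeleton preservation'' you identify as the crux) to conclude immediately that $H_J(w) \in \widehat{S_n}$ and that no other $H_K(w)$ is.
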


\begin{proof}
Since valley-hopping defines an equivalence relation, we know the set of all hop-equivalence classes partitions $S_n$. All we need is to show that for every $w \in S_n$, there is a unique $u \in \widehat{S_n}$ such that $w\sim u$. The desired element is given as follows. Let $J\subseteq F(w)$ be the set of all free letters that lie on a downslope. Then $u =H_J(w) \in \widehat{S_n}$, and for any $K\neq J$, $H_K(w) \notin \widehat{S_n}$ since it has some letter on a downslope.
\end{proof}

Together with Lemma \ref{lem:partition} and Observation \ref{obs:rankJ}, the following lemma establishes the first claim of Theorem \ref{thm:SBD}.

\begin{lem}[Boolean isomorphism]\label{lem:boole}
Let $w \in \widehat{S_n}$ with $r$ descents. Then the induced poset $(P_w, \leq)$ is isomorphic to $\B_{n-1-2r}$. 
\end{lem}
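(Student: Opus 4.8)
The plan is to exhibit an explicit order isomorphism $\rho\colon \B_{n-1-2r}\to (P_w,\le)$. Identify the ground set of $\B_{n-1-2r}$ with the set $F(w)$ of free letters of $w$; since $w$ has $r$ peaks and $r+1$ valleys we have $|F(w)|=n-1-2r$, so this identification makes sense. Define $\rho(J)=H_J(w)$. That $\rho$ is a bijection is almost immediate from facts already recorded: $F(H_J(w))=F(w)$, each $H_j$ toggles only whether $j$ sits on an up- or a downslope, and $w\in\widehat{S_n}$ has all free letters on upslopes, so $J$ is recovered from $H_J(w)$ as its set of free letters lying on a downslope; moreover $|P_w|=2^{n-1-2r}$. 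By Observation \ref{obs:rankJ}, $\rho$ carries a rank-$s$ element of $\B_{n-1-2r}$ to a rank-$(r+s)$ element of $(S_n,\le)$. So the real content of the lemma is the equivalence $J\subseteq K \iff H_J(w)\le H_K(w)$.

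For the implication $J\subseteq K\Rightarrow H_J(w)\le H_K(w)$ I would argue by transitivity, reducing to the case $K=J\cup\{j\}$ with $j\notin J$, and verify the two conditions of Proposition \ref{prp:order}. The key point is that passing from $H_J(w)$ to $H_{J\cup\{j\}}(w)$ moves only the letter $j$: in $H_J(w)$ the letter $j$ forms a singleton block (a letter on an upslope is its own maximal decreasing run), and after hopping it is inserted into the unique decreasing run $B$ that it meets at its own height, all other letters and blocks being unchanged. Hence $H_J(w)$ refines $H_{J\cup\{j\}}(w)$, the partition changing by the single merge $B\cup\{j\}$ (so, with the rank count, this is in fact a cover). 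Consistency then comes down to: for a block $B'$ of $H_J(w)$ straddling a value $k\notin B'$, if $k\neq j$ then nothing relevant has moved, while if $k=j$ then $j$ either lands inside $B$ (when $B'=B$) or, since $B$ is the nearest height-$j$ slope to the left of $j$, stays on the same side of $B'$ as before.

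The reverse implication is short. Suppose $H_J(w)\le H_K(w)$ but $j\in J\setminus K$. Then in $H_J(w)$ the letter $j$ lies on a downslope, so the block of $H_J(w)$ containing $j$ has size at least two, whereas in $H_K(w)$ the letter $j$ lies on an upslope, so $\{j\}$ is a block of $H_K(w)$. But $H_J(w)\le H_K(w)$ forces $H_J(w)$ to refine $H_K(w)$ (Proposition \ref{prp:order}), which would require the block of $j$ in $H_J(w)$, of size $\ge 2$, to be contained in the block $\{j\}$ of $H_K(w)$ — impossible. Hence $J\setminus K=\emptyset$, i.e.\ $J\subseteq K$, and $\rho$ is an isomorphism of posets.

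I expect the only genuinely delicate point to be the consistency verification in the second paragraph, and within it the geometric fact that a hopping free letter $j$ cannot leap over a block $B'$ whose values straddle $j$: such a $B'$ furnishes a slope at height $j$, and $j$ halts at the nearest such slope. This is a statement about the mountain-range picture of permutation pre-orders; once it is made precise the remaining checks are routine bookkeeping.
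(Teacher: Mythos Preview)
Your proposal is correct and follows essentially the same approach as the paper's proof: both directions are argued via Proposition~\ref{prp:order}, with the forward direction reduced to a single hop (singleton $\{j\}$ merges into the adjacent block, consistency because $j$ cannot hop past a block whose values straddle $j$) and the reverse direction handled by the refinement contradiction you give. Your write-up is somewhat more explicit about the bijection and the consistency bookkeeping than the paper's, but the argument is the same.
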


\begin{proof}
To prove the lemma we will first prove that if $J \subseteq K \subseteq F(w)$, then $H_J(w) \leq H_K(w)$ in the shard intersection order $(S_n,\leq)$. It suffices to assume $u \in P_w$ with $j$ on an upslope and show $u < H_j(u)$ in the shard intersection order.

If $u(i) = j$ is on an upslope in $u$, then it forms its own (singleton) decreasing run, while in $H_j(u)$, the letter $j$ is part of a block of size at least two (including at least the nearest valley to its right). Thus we see that $u$ refines $H_j(u)$. Since $j$ is the only element to move and its comparability to other elements in the pre-order is preserved (it can't move left of a block it was previously to the right of only by jumping over lower lying valleys), we see the relations on the decreasing runs of $u$ and $H_j(u)$ are consistent. So from Proposition \ref{prp:order} we see $u < H_j(u)$ in $(S_n, \leq)$.

Now suppose $J \nsubseteq K$. We will show $u=H_J(w) \nleq H_K(w)=v$. In particular let $j \in J$, $j\notin K$. Then in $u$, $j$ lies on a downslope, while in $v$, $j$ lies on an upslope. This means in particular that $j$ forms a singleton block in $v$, while it is part of a block of size at least two in $u$. Thus, as pre-orders, $u$ cannot be a refinement of $v$, and Proposition \ref{prp:order} shows $u \nleq v$ in $(S_n,\leq)$.
\end{proof}

We now show the second claim of Theorem \ref{thm:SBD}, that our symmetric boolean decomposition restricts to noncrossing partitions. Because of Lemma \ref{lem:boole} and Observation \ref{obs:rankJ}, we need only show that 
\[ \mathcal{P}(231) = \{ P_w : w \in \widehat{S_n} \cap S_n(231) \}\] is a partition of $S_n(231)$, the set of 231-avoiding permutations. We already know from Lemma \ref{lem:partition} that the classes $P_w$, $w \in \widehat{S_n}$, are pairwise disjoint, so it only remains to check that if $w \in S_n(231)$, then its hop-equivalence class $P_w$ is entirely contained in $S_n(231)$. This is established with the following lemma.

\begin{lem}[Restriction to 231-avoiders]
Let $w \in S_n(231)$. Then $P_w \subseteq S_n(231)$. That is, valley hopping gives an equivalence relation on the set of $231$-avoiding permutations.
\end{lem}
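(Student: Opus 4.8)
The plan is to reduce the statement to a single valley-hop and then argue at the level of decreasing-run partitions. Recall from the description of $\phi$ and Proposition \ref{prp:sub} that a permutation $w$ lies in $S_n(231)$ if and only if its decreasing runs, read as a set partition $\pi(w)$, form a noncrossing partition \emph{and} the runs occur from left to right in increasing order of their minima; the second condition is genuinely needed, since the partition of $231 = 2|31$ is noncrossing even though $231\notin S_n(231)$. Because $H_j$ is an involution and every element of $P_w$ is reached from $w$ by finitely many hops on free letters, it suffices to prove: if $w\in S_n(231)$ and $w(i)=j$ is a free letter, then $H_j(w)\in S_n(231)$. (The excerpt already notes that $H_j$ preserves the set of free letters, so the induction on hop-length is well-founded.)

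I would split along the definition of $H_j$. If $j$ lies on an upslope it is a singleton block of $w$ (and not the first or last block), and the hop moves it left, inserting it between two letters $w(k-1) > j > w(k)$ that are consecutive in some decreasing run $B$; thus $\pi(H_j(w))$ is obtained from $\pi(w)$ by deleting $\{j\}$ and adjoining $j$ to $B$. Here the increasing-minimum condition survives for free: the minimum of $B$ is unchanged (it already lies below $j$), and the new block-minimum sequence is the old one with the entry $j$ deleted, hence still increasing; the real work is to show that adjoining $j$ to $B$ keeps the partition noncrossing. If instead $j$ lies on a downslope it is an interior letter of a run $B$, sitting between its value-consecutive neighbours $w(i-1) > j > w(i+1)$ in $B$, and the hop moves it right, making $\{j\}$ a fresh singleton block inserted between $w(k) < j < w(k+1)$; now noncrossingness is automatic, since $B\setminus\{j\}\subseteq B$ and a singleton crosses nothing, and the real work is to show that $\{j\}$ lands in the correct slot of the increasing-minimum order.

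Both remaining verifications rest on one positional observation drawn from the choice of $k$: every letter at a position strictly between the old and the new position of $j$ has value less than $j$ (this follows by a short induction from the maximality, respectively minimality, of $k$). Combined with the hypothesis that the runs of $w$ are in increasing-minimum order, this forces, in the upslope case, any block $C$ that might straddle $j$ inside the value-gap $(w(k),w(k-1))$ of $B$ to lie entirely on one side of $j$, so no crossing is created; and it identifies, in the downslope case, all blocks lying to the left of the insertion point as having minimum at most $w(k)<j$. The one point where $231$-avoidance of $w$ is genuinely invoked is the complementary half of the downslope case: one must rule out that the block $D$ beginning just to the right of the insertion point dips below level $j$ — but if it did, then $j = w(i)$, the maximum $w(k+1)$ of $D$, and the letter of $D$ below $j$ would form a $231$ pattern in $w$, a contradiction, so $\min(D) > j$ and, by increasing-minimum order, all later blocks have minimum above $j$ as well. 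I expect this pattern argument, together with the bookkeeping of exactly which blocks fall before and after the insertion point, to be the only real obstacle; everything else is routine once the positional observation is in place. Chaining single hops then gives $P_w\subseteq S_n(231)$.
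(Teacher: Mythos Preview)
Your argument is correct, but it takes a genuinely different route from the paper. The paper proves the contrapositive in a few lines: assuming $w$ contains a $231$ pattern $w(k) < w(i) < w(j)$ with $i<j<k$, one may take $w(j)$ to be a peak, and then observes that no hop $H_s$ can move any letter of value less than $w(j)$ from one side of that peak to the other. Hence the triple persists in $H_s(w)$ for every free $s$, and $H_s(w)\notin S_n(231)$. That is the entire proof.

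Your approach instead works directly, translating membership in $S_n(231)$ into the structural characterisation ``runs form a noncrossing partition in increasing-minimum order'' and then tracking both invariants through a single hop. This is sound --- the positional observation and the one genuine use of $231$-avoidance in the downslope case are exactly right --- but it is considerably longer, and in the upslope noncrossing check you should also dispose of blocks $C$ that lie outside the value-gap $(w(k),w(k-1))$; this is routine (any purported new crossing forces an old crossing between $B$ and $C$), but it is not quite covered by the sentence you wrote. What your approach buys is an explicit link to the map $\phi$ and the noncrossing-partition structure, which may be conceptually useful elsewhere; what the paper's approach buys is brevity and an argument that never leaves the language of permutation patterns.
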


\begin{proof}
It suffices to show that if $w \notin S_n(231)$, $H_s(w) \notin S_n(231)$ for all $s \in F(w)$. 

Suppose $w$ contains the pattern $231$, i.e., there is a triple $i < j < k$ with $w(k) < w(i) < w(j)$. Without loss of generality, we may assume that $w(j)$ is a peak. (If not, there is necessarily a peak $w(j')$ with $i<j'<j$ and $w(j')>w(j)$.) If neither $w(i)$ nor $w(k)$ are free, or if $s \notin \{w(i),w(k)\}$, then we are done, as $H_s(w)$ leaves the relative positions of $w(i), w(j),$ and $w(k)$ unchanged. 

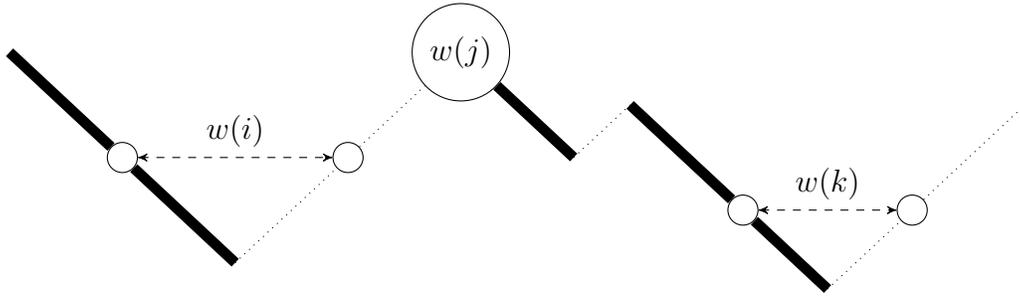
\begin{figure}
\[
\begin{tikzpicture}[>=stealth',bend angle=45,auto,xscale=.75,yscale=.7]
\tikzstyle{state}=[circle,draw=black,minimum size=4mm]
\draw (0,0) node[state] (1) {};
\draw (4,0) node[state] (2) {};
\draw (6,2) node[state] (3) {$w(j)$};
\draw (11,-1) node[state] (4) {};
\draw (14,-1) node[state] (5) {};
\draw[line width=4] (-2,2)--(1);
\draw[line width=4] (1)--(2,-2);
\draw[dotted] (2,-2)--(2);
\draw[dotted] (2)--(3);
\draw[line width=4] (3)--(8,0);
\draw[dotted] (8,0)--(9,1);
\draw[line width=4] (9,1)--(4);
\draw[line width=4] (4)--(12.5,-2.5);
\draw[dotted] (12.5,-2.5)--(5);
\draw[dotted] (5)--(16,1);
\draw[dashed, <->] (1) -- node[auto=left] {$w(i)$} (2);
\draw[dashed, <->] (4) -- node[auto=left] {$w(k)$} (5);
\end{tikzpicture}
\]
\caption{Valley hopping preserves the pattern 231.}\label{fig:hopproof}
\end{figure}

Now suppose $s = w(i)$. Valley hopping would never allow $w(i)$ to move to the right of $w(j)$ (since $i < j$ and $w(i) < w(j)$), so $H_s(w)\notin S_n(231)$. Similarly, if $s=w(k)$, $H_s(w) \notin S_n(231)$, since $w(k)$ would not be able to move to the left of $w(j)$ (since $k > j$ and $w(k) < w(j)$). See Figure \ref{fig:hopproof}.
\end{proof}

We remark that the symmetric boolean decomposition of $(NC(n),\leq)$ given here, as the image of $(S_n(231),\leq)$ under the map $\phi$ from Section \ref{sec:noncrossing}, is distinct from Simion and Ullman's decomposition. For example, in $NC(4)$, our decomposition describes the maximal boolean interval as: \[ P_{1234} = \{ 1|2|3|4, 21|3|4, 31|2|4, 41|2|3, 321|4, 421|3, 431|2, 4321\},\] whereas Simion and Ullman's decomposition gives the maximal boolean interval as: \[ \{ 1|2|3|4, 21|3|4, 1|32|4, 1|2|43, 321|4, 21|43, 1|432, 4321\}.\]

\section{Further questions}\label{sec:generalization}

Apart from checking shellability for Coxeter groups not of type $A_{n-1}, B_n$, or $D_n$, the natural next task is to exhibit a symmetric boolean decomposition for the shard intersection order $(W,\leq)$ of any finite Coxeter group $W$. Ideally, this would restrict to a symmetric boolean decomposition of $(NC(W),\leq)$. 

There is some hope for such a decomposition, as it is known that the $W$-Eulerian polynomial is $\gamma$-nonnegative for any $W$ \cite[Appendix]{Stem}. One way to proceed in finding a symmetric boolean decomposition of the shard intersection order $(W,\leq)$ is to take a purely combinatorial approach. The models provided in Section \ref{sec:poset} give a first step in this direction.  With luck one would lift a combinatorial proof of $\gamma$-nonnegativity for the $W$-Eulerian polynomial as we have done here for $W=S_n$. 

For instance, Stembridge's proof of $\gamma$-nonnegativity in type $B_n$ is quite nice. The idea is to show that the distribution of descents among all signed permutations with the same underlying permutation is boolean. Unfortunately this partition of $B_n$ is not compatible with the shard intersection order. For instance, following Stembridge's argument would give $1432$ and $14\bar 3 2$ in the same equivalence class, but $C(1432) = \{ x_4 = x_3 = x_2\}$ while $C(14\bar 3 2) = \{ -x_2 \leq x_3 = -x_4 \leq \pm x_1, 0 \leq x_4 = -x_3 \leq x_2\}$. Clearly neither cone contains the other.

Of course a uniform proof that $(W,\leq)$ has a symmetric boolean decomposition would be best. We remark that at present there is no uniform proof of $\gamma$-nonnegativity for $W$-Eulerian polynomials.

Another reason to believe $(W,\leq)$ might have a symmetric boolean decomposition is that, just as Simion and Ullman show for the classical (type $A_{n-1}$) lattice of noncrossing partitions, the lattice of noncrossing partitions of type $B_n$ has a symmetric boolean decomposition. This fact is proven by Hersh \cite[Theorem 8]{H}, using the notion of an $R^*S$-labeling---a strong sort of chain labeling together with a certain $S_n$-action on maximal chains. (Our EL-labelings in Section \ref{sec:shell} are not $R^*S$-labelings.) If one can show that the shard intersection order $(W,\leq)$ has an $R^*S$ labeling, then \cite[Theorem 5]{H} implies it has a symmetric boolean decomposition.

Another approach to Hersh's result for $(NC(B_n),\leq)$ proceeds inductively as follows. In \cite[Proposition 12]{Rei}, Reiner shows (as a prelude to proving the existence of a symmetric chain decomposition) that the type $B_n$ noncrossing partition lattices are unions of products of smaller copies of noncrossing partition lattices with the appropriate centers of symmetry.\footnote{Both Hersh and Reiner study other families of lattices of ``signed" noncrossing partitions including a family they refer to as that of ``type $D_n$". While these lattices admit symmetric boolean decompositions, this model has since come to be viewed as the wrong generalization from the point of view of root systems. In particular, it is different from the lattice $(NC(D_n),\leq)$ that lies inside the shard intersection order for $D_n$. See Athanasiadis and Reiner \cite[Remark 4]{AR}, where it is remarked that $NC(D_n)$ admits a symmetric chain decomposition.} Thus, after checking small cases, it suffices to show that the product of two posets with symmetric boolean decompositions admits a symmetric boolean decomposition. This is true, as the following lemma shows. Recall the product of two posets $(P,\leq_P)$ and $(Q,\leq_Q)$ is the poset $(P\times Q, \leq_{P\times Q})$ with partial order $(p,q) \leq_{P\times Q} (p',q')$ if and only if $p\leq_P p'$ and $q\leq_Q q'$.

\begin{lem}\label{lem:prod}
Suppose $(P,\leq_P)$ and $(Q,\leq_Q)$ are posets with symmetric boolean decompositions. Then $(P\times Q, \leq_{P\times Q})$ has a symmetric boolean decomposition.
\end{lem}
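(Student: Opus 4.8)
The plan is to push the two decompositions through the product coordinatewise. Write $(P,\leq_P)$ as the disjoint union of its pieces $P_1,\ldots,P_k$, with bijections $\rho_i\colon \B_{n-2a_i}\to P_i$ (where $n=\rk(P)$, $0\le a_i\le n/2$) taking covers to covers and sending rank $r$ to rank $a_i+r$; similarly write $(Q,\leq_Q)$ as the disjoint union of $Q_1,\ldots,Q_\ell$ with bijections $\sigma_j\colon \B_{m-2b_j}\to Q_j$ (where $m=\rk(Q)$, $0\le b_j\le m/2$). First I would observe that $\{\,P_i\times Q_j : 1\le i\le k,\ 1\le j\le\ell\,\}$ is a partition of $P\times Q$, since $\{P_i\}$ partitions $P$ and $\{Q_j\}$ partitions $Q$.

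Next I would produce, for each pair $(i,j)$, the required map onto $P_i\times Q_j$. Using the standard isomorphism $\B_c\times\B_d\cong\B_{c+d}$ (which is rank-additive), set $c=n-2a_i$ and $d=m-2b_j$, so that $c+d=(n+m)-2(a_i+b_j)$, and define
\[
\tau_{i,j}\colon \B_{(n+m)-2(a_i+b_j)}\ \xrightarrow{\ \cong\ }\ \B_{n-2a_i}\times\B_{m-2b_j}\ \xrightarrow{\ \rho_i\times\sigma_j\ }\ P_i\times Q_j\subseteq P\times Q .
\]
This is a bijection because each factor is. I would then check that $\tau_{i,j}$ takes cover relations to cover relations: a cover in $\B_c\times\B_d$ is a cover in one coordinate together with equality in the other, so it is carried by $\rho_i\times\sigma_j$ to a pair that differs by a cover in $P$ (resp.\ $Q$) in one coordinate and is equal in the other — and such pairs are exactly the covers of $P\times Q$. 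Finally I would verify the rank shift: an element of rank $r$ in $\B_{c+d}$ corresponds to $(x,y)$ with $\rk(x)+\rk(y)=r$, whence $\tau_{i,j}(x,y)=(\rho_i(x),\sigma_j(y))$ has rank $(a_i+\rk(x))+(b_j+\rk(y))=(a_i+b_j)+r$ in $P\times Q$; and $a_i+b_j\le n/2+m/2=(n+m)/2$, so the index $a_i+b_j$ is in the allowed range. Thus $\{\tau_{i,j}\}$ is a symmetric boolean decomposition of $P\times Q$.

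I do not expect a genuine obstacle here: the content is entirely the bookkeeping that (i) covers in a product poset come precisely from a cover in a single coordinate, (ii) $\B_c\times\B_d\cong\B_{c+d}$ as ranked posets, and (iii) ranks and the "middle-rank" offsets add. The only point that warrants a sentence of care is confirming that Definition~\ref{def:sbd} only demands that $\rho_i$ carry covers to covers (rather than be an isomorphism onto an induced subposet), so that the same weaker requirement is all that needs checking for $\tau_{i,j}$ — which the argument above supplies.
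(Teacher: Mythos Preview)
Your proof is correct and follows essentially the same approach as the paper: partition $P\times Q$ into the products $P_i\times Q_j$ of the pieces, and use $\B_c\times\B_d\cong\B_{c+d}$ together with additivity of rank to verify Definition~\ref{def:sbd}. If anything, your write-up is more careful than the paper's, since you explicitly check that covers in the product are carried to covers (the paper simply asserts the subposet condition) and you confirm the offset $a_i+b_j$ lies in the allowed range.
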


\begin{proof}
Suppose $P$ has rank $m$ and $Q$ has rank $n$. Denote their respective decompositions by
\[ \mathcal{P} = \{ P_{i,j} : |P_{i,j}| = 2^{m-2j} \} \mbox{ and } \mathcal{Q}= \{ Q_{k,l} : |Q_{k,l}| = 2^{n-2l} \}.\]
We wish to show $P\times Q$, which has rank $m+n$, has a symmetric boolean decomposition. We claim that the following is such a partition: \[ \mathcal{PQ} = \{ P_{i,j}\times Q_{k,l} \}. \] Clearly $\mathcal{PQ}$ is a partition of $P\times Q$.

Now, as a first check that Definition \ref{def:sbd} is satisfied, we  see \[|P_{i,j}\times Q_{k,l}| = |P_{i,j}|\times |Q_{k,l}| = 2^{m-2j}2^{n-2l} = 2^{m+n-2(j+l)}.\] Say $p$ is the rank-minimal element of $P_{i,j}$, with $\rk(p) = j$ in $(P,\leq_P)$, and $q$ is the rank-minimal element $Q_{k,l}$ in $(Q,\leq_Q)$ with $\rk(q) = l$. Then $(p,q)$ is the unique rank-minimal element of $P_{i,j}\times Q_{k,l}$, with $\rk(p,q) = \rk(p)+\rk(l) = j+l$. Generally, rank is additive in the product, so since $\B_j$ is a subposet of $P_{i,j}$ and $\B_l$ is a subposet of $Q_{k,l}$, we have $\B_{j+l} \cong \B_j \times \B_l$ is a subposet of $P_{i,j}\times Q_{k,l}$, as desired. 
\end{proof}

\begin{cor}
\emph{(\cite{SU} and \cite[Theorem 8]{H}).} 
The noncrossing partition lattices $(NC(n),\leq)$ and $(NC(B_n),\leq)$ admit symmetric boolean decompositions.
\end{cor}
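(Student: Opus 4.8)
The plan is to treat the two lattices separately. For $(NC(n),\leq)$ there is essentially nothing left to do: a symmetric boolean decomposition of it is exactly the second assertion of Theorem \ref{thm:SBD}, obtained in Section \ref{sec:proof} by restricting the valley-hopping partition of $(S_n,\leq)$ to $231$-avoiding permutations. So the real content is the statement for $(NC(B_n),\leq)$, and for this I would argue by induction on $n$.

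First I would dispose of the base cases ($n=1,2$, say), where $(NC(B_n),\leq)$ is small enough that a symmetric boolean decomposition can be exhibited by inspection. For the inductive step, the key input is Reiner's structural result \cite[Proposition 12]{Rei}, which presents $(NC(B_n),\leq)$ as a union --- indeed a partition --- of saturated subposets, each of which is isomorphic to a product of strictly smaller noncrossing partition lattices of types $A$ and $B$, and each positioned so that its center of symmetry coincides with that of $(NC(B_n),\leq)$. Every factor occurring in one of these products is either a type-$A$ lattice $(NC(m),\leq)$, which carries a symmetric boolean decomposition by Theorem \ref{thm:SBD}, or a type-$B$ lattice $(NC(B_m),\leq)$ with $m<n$, which carries one by the inductive hypothesis. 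Hence by Lemma \ref{lem:prod} each of the product subposets in Reiner's partition admits a symmetric boolean decomposition. Since all of these subposets sit inside $(NC(B_n),\leq)$ with the same middle rank as $(NC(B_n),\leq)$ itself, taking the union of their symmetric boolean decompositions over all subposets in the partition produces a symmetric boolean decomposition of $(NC(B_n),\leq)$, completing the induction.

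The step I expect to demand the most care is the bookkeeping around Reiner's decomposition: one must confirm that \cite[Proposition 12]{Rei} really does yield a genuine \emph{partition} of $NC(B_n)$ into products of smaller type-$A$ and type-$B$ noncrossing partition lattices whose centers of symmetry all align with the center of symmetry of $NC(B_n)$ (the ``appropriate centers of symmetry'' clause), and to pin down precisely which small cases anchor the induction. Once that is settled, the remaining assembly is formal: Definition \ref{def:sbd} is visibly preserved under disjoint unions of pieces sharing a common middle rank, and Lemma \ref{lem:prod} supplies the symmetric boolean decompositions of the products.
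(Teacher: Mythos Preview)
Your proposal is correct and matches the paper's own argument essentially line for line: the type $A$ case is exactly Theorem~\ref{thm:SBD}, and the type $B$ case is handled by induction via Reiner's decomposition \cite[Proposition 12]{Rei} together with Lemma~\ref{lem:prod}, just as the paper outlines in the paragraph preceding the corollary. Your caveat about verifying the bookkeeping in Reiner's decomposition (that it yields a genuine partition into products with aligned centers of symmetry) is well placed, as the paper likewise defers those details to \cite{Rei}.
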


Recall that, for any finite Coxeter group $W$, the rank generating function of the $W$-noncrossing partition lattice is the $h$-polynomial of the corresponding $W$-associahedron \cite[Theorem 5.9]{FR}. Thus, by Observation \ref{obs:gam}, we get the following known result.

\begin{cor}
Let $W$ be a Coxeter group of type $A_{n-1}$ or $B_n$. Then the $h$-polynomial of the $W$-associahedron is $\gamma$-nonnegative.
\end{cor}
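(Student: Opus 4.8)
The plan is to chain together three results already in hand. The essential point is that each of the three ingredients --- a symmetric boolean decomposition of $(NC(W),\leq)$, the fact that such a decomposition forces $\gamma$-nonnegativity of the rank generating function, and the identification of that rank generating function with the $h$-polynomial of the $W$-associahedron --- is available from the material above, so the proof is a short assembly rather than a new argument.

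First I would invoke the preceding corollary: for $W$ of type $A_{n-1}$ the lattice $(NC(n),\leq) \cong (NC(A_{n-1}),\leq)$ carries a symmetric boolean decomposition by Theorem \ref{thm:SBD} (transported through the bijection $\phi$ of Section \ref{sec:noncrossing}), and for $W$ of type $B_n$ the lattice $(NC(B_n),\leq)$ carries one by Hersh's \cite[Theorem 8]{H}. Next, since $(NC(W),\leq)$ is a graded poset whose rank is the rank of $W$ --- namely $n-1$ in type $A_{n-1}$ and $n$ in type $B_n$ --- Observation \ref{obs:gam} applies verbatim and yields that $\sum_{\pi \in NC(W)} t^{\rk(\pi)}$ is $\gamma$-nonnegative, i.e., expressible as $\sum_{j} \gamma_j\, t^j (1+t)^{N-2j}$ with all $\gamma_j \geq 0$, where $N$ is the rank. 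Finally, \cite[Theorem 5.9]{FR} identifies this rank generating function with the $h$-polynomial of the $W$-associahedron, so the $h$-polynomial inherits the $\gamma$-nonnegative expansion.

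There is essentially no obstacle in this argument: the substantive work was carried out earlier (the symmetric boolean decompositions and the general passage from such a decomposition to $\gamma$-nonnegativity) or is imported wholesale from \cite{H} and \cite{FR}. The only point meriting a line of verification is that Observation \ref{obs:gam} is stated for a poset ``of rank $n$,'' so one must confirm $(NC(W),\leq)$ is graded of the advertised rank, ensuring the degree of the resulting symmetric polynomial matches the degree of the $h$-polynomial of the $W$-associahedron; this is standard and follows, for type $A_{n-1}$, from the rank computation $\rk(\pi) = d(\phi^{-1}(\pi))$ noted in Section \ref{sec:noncrossing}, with the analogous statement in type $B_n$.
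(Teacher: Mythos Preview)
Your proposal is correct and follows essentially the same route as the paper: invoke the symmetric boolean decomposition of $(NC(W),\leq)$ established just above (via Theorem~\ref{thm:SBD} or \cite{SU} in type $A_{n-1}$, and via \cite[Theorem~8]{H} in type $B_n$), apply Observation~\ref{obs:gam} to deduce $\gamma$-nonnegativity of the rank generating function, and then identify that polynomial with the $h$-polynomial of the $W$-associahedron via \cite[Theorem~5.9]{FR}. The paper states this derivation in the sentence immediately preceding the corollary rather than as a displayed proof, but the content is identical.
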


We remark that the coefficients of the $h$-polynomials of $W$-associahedra have nice formulas (these are $W$-Narayana numbers), and $\gamma$-nonnegativity for the classical types (including $D_n$) can be verified directly from these formulas. 
From \cite[Figure 5.12]{FR} it is also straightforward to check that $\gamma$-nonnegativity holds for the $W$-associahedra of exceptional type.

We finish by observing that while we have provided two necessary conditions for a poset to have a symmetric boolean decomposition (Observation \ref{obs:scd} and Observation \ref{obs:gam}), we have only one sufficient condition (Lemma \ref{lem:prod}). Hersh's work gives another sufficient condition: the existence an $R^*S$ labeling for the poset \cite[Theorem 5]{H}. Just as others have given sufficient conditions for symmetric chain decompositions to exist \cite{GK,Gr}, so it may be of general interest to find other sufficient conditions for the existence symmetric boolean decompositions.

\end{document}